\theoremstyle{plain}
\newtheorem{theorem}{Theorem}[section]
\theoremstyle{plain}
\newtheorem{proposition}[theorem]{Proposition}
\theoremstyle{plain}
\newtheorem{lemma}[theorem]{Lemma}
\theoremstyle{plain}
\newtheorem{corollary}[theorem]{Corollary}
\theoremstyle{plain}
\theoremstyle{definition}
\newtheorem{definition}[theorem]{Definition}
\theoremstyle{remark}
\theoremstyle{remark}
\theoremstyle{remark}
\title[Group approximation in Cayley topology, III]
{Group approximation in Cayley topology and coarse geometry, \\ Part III: Geometric property (T).}
\author[M. Mimura]{Masato Mimura}
\address{MM: Mathematical Institute, Tohoku University}
\author[N. Ozawa]{Narutaka Ozawa}  
\address{NO: Research Institute for Mathematical Sciences, Kyoto University}
\author[H. Sako]{Hiroki Sako}
\address{HS: School of Science, Tokai University}
\author[Y. Suzuki]{Yuhei Suzuki}
\address{YS: Department of Mathematical Sciences, University of Tokyo}
\date{\today}
\subjclass[2010]{20F65 (primary), and 46M20 (secondary)} 
\begin{document}

\begin{abstract}
In this series of papers, we study correspondence between the following:
(1) large scale structure of the metric space $\bigsqcup_m \mathrm{Cay} \left( G^{(m)} \right)$ 
consisting of Cayley graphs of finite groups with $k$ generators;
(2) structure of groups which appear in the boundary of the set $\left\{ G^{(m)} \right\}$ 
in the space of $k$-marked groups. In this third part of the series, 
we show the correspondence among the metric properties `geometric property (T),' 
`cohomological property (T),' and the group property `Kazhdan's property (T).' 
Geometric property (T) of Willett--Yu is stronger than being expander graphs. 
Cohomological property (T) is stronger than geometric property (T) for general coarse spaces. 
\end{abstract}

\keywords{Coarse geometry; Geometric property (T); The space of marked groups; Coarse cohomology}

\maketitle

\section{Introduction}
In 1967, D. Kazhdan introduced the concept of \textit{property} $(\mathrm{T})$ for locally compact groups 
in terms of uniform spectral gaps for all unitary representations (in this paper, we regard 
Proposition~\ref{proposition: (T)} as a definition of property $(\mathrm{T})$ for discrete groups), 
which represents \textit{extreme rigidity} of groups. See a book of Bekka--de la Harpe--Valette \cite{BookBekkadelaHarpeValette} 
for comprehensive treatise on this property. For instance, G.~Margulis has observed that for a residually 
finite and finitely generated group $G$ with property $(\mathrm{T})$, 
any box space $\mathop{\Box} G$ forms a family of \textit{expanders}, 
namely, a family of uniformly locally finite and finite connected graphs 
whose combinatorial Laplacians have the first positive eigenvalues bounded away from zero.
(On expanders, we refer the reader to a book \cite{BookLubotzky} by A.~Lubotzky). 
Here for such $G$ and a sequence of normal subgroups $N_1 > N_2 > \cdots $ of $G$ with finite indices 
with $\bigcap_{m} N_m= \{1_G\}$, the \textit{box space} $\mathop{\Box}_{\{N_m\}_m} G$ 
\textit{associated with} $\{N_m\}_m$ is the \textit{coarse disjoint union} 
(see Subsection~\ref{subsection: marked groups and ms}) of finite Cayley graphs $\mathrm{Cay}(G/N_m,S)$, 
where $S$ is a fixed finite generating set of $G$ 
(the coarse structure of the box space does not depend on the choice of $S$).
Expander sequence represents strong rigidity, and serves a counterexample of the surjective side 
of the coarse Baum--Connes conjecture for coarse spaces. 
We refer the reader to the monographs \cite{NowakYuBook} and \cite{RoeLectureNote} 
for the basics of this subject. 

We however may obtain expander sequence from a group \textit{far from} having property $(\mathrm{T})$. 
For instance, A.~Selberg has showed that for a concrete example of $\{N_m\}_m$ for $F_2$, 
the free group of rank $2$, the box space $\mathop{\Box}_{\{N_m\}_m} F_2$ forms an expander sequence 
(for details, see a forthcoming book \cite{BookLubotzkyZuk} of Lubotzky and \.{Z}uk on property $(\tau)$). 
It had been paid strong attention to the problem of whether one can distinguish expanders coming 
from property $(\mathrm{T})$ groups from ones coming from non-$(\mathrm{T})$ groups in terms of coarse 
geometric properties. This problem is related to a question by J.~Roe \cite{RoeLectureNote} to 
define `coarse property $(\mathrm{T})$.'

R.~Willett and G.~Yu \cite{WillettYuI}, \cite{WillettYuII} have studied the maximal coarse Baum--Connes 
conjecture and introduced the notion of \textit{geometric property }$(\mathrm{T})$ for a (coarse) 
disjoint union of uniformly locally finite and finite graphs, which is stronger than being an expander sequence. 
They have showed that this property is an obstruction to the surjectivity of the maximal coarse Baum--Connes 
assembly map, and that a box space $\mathop{\Box} G$ has geometric property $(\mathrm{T})$ if and only if 
$G$ possesses property $(\mathrm{T})$. In the later work \cite{WillettYu13}, they have extended 
the definition of geometric property $(\mathrm{T})$ for (weakly) monogenic coarse space of bounded geometry, 
and proved that this property is a coarse invariant. In this manner, they give a satisfactory answer to the 
problem mentioned above (on the other hand, the Selberg expander $\mathop{\Box}_{\{N_m\}}F_2$ is showed 
in \cite{ChWW} to admit a \textit{fibered coarse embedding} into a Hilbert space in the sense of 
Chen--Wang--Yu \cite{ChWY}, and this ensures the maximal coarse Baum--Connes conjecture for this space, 
see also \cite{WillettYuII}). 

It is a well-known theorem of Delorme--Guichardet (Theorem 2.12.4 in \cite{BookBekkadelaHarpeValette}) 
that property $(\mathrm{T})$ can be characterized in terms of $1$-cohomology with coefficients 
in unitary representations. In Section~\ref{section:cohomologicalT}, we investigate an analogue of 
this characterization in the setting of coarse geometry, and introduce 
\emph{cohomological property $(\mathrm{T})$} for coarse spaces. It will be proved that 
cohomological property $(\mathrm{T})$ implies geometric property $(\mathrm{T})$ (but not vice versa).

The goal of this paper is to provide a characterization of the (coarse) disjoint 
union $X:=\bigsqcup_m \mathrm{Cay}\left( G^{(m)}, s_1^{(m)}, s_2^{(m)}, \ldots, s_k^{(m)} \right)$ 
of finite Cayley graphs to enjoy geometric property $(\mathrm{T})$. In the previous works of the 
first-named and the third-named author, we have revealed that the concept of the 
\textit{space of marked groups} and \textit{Cayley topology} play a key r\^{o}le in studying 
coarse geometric properties for such $X$. More precisely, for the \textit{Cayley boundary} 
$\partial_{\mathrm{Cay}}(\{G^{(m)}\}_m):=\overline{\left\{ G^{(m)} \right\}_m}^{\mathrm{Cayley}} \setminus \left\{ G^{(m)} \right\}_m $ 
of a sequence $\left\{\left( G^{(m)}, s_1^{(m)}, s_2^{(m)}, \ldots, s_k^{(m)} \right)\right\}_m$ 
in the space $\mathcal{G}(k)$ of $k$-marked groups, the following holds 
(for details, we refer the reader to the corresponding papers):
\begin{enumerate}[$(i)$]
   \item \cite{MimuraSakoI}: $X$ as above has property $\mathrm{A}$\quad $\Leftrightarrow$ \quad $\partial_{\mathrm{Cay}}(\{G^{(m)}\}_m)$ is uniformly amenable\quad $\Leftrightarrow$ \quad every member of $\partial_{\mathrm{Cay}}(\{G^{(m)}\}_m)$ is amenable; 
   \item \cite{MimuraSakoII}: $X$ as above admits a fibered coarse embedding into a Hilbert space \quad $\Leftrightarrow$ \quad $\partial_{\mathrm{Cay}}(\{G^{(m)}\}_m)$ is uniformly a-$\mathrm{T}$-menable.
\end{enumerate}
For the definition of $\mathcal{G}(k)$, and the Cayley topology, see Subsection~\ref{subsection: Cayley topology}. We note that the two results above can be regarded as generalization of previously known results (respectively by E.~Guentner and Chen--Wang--Wang \cite{ChWW}) for box spaces. Indeed, in the box space case, for a fixed finite generating set $S=(s_1,\ldots ,s_k)$ of $G$, the sequence $\{(G/N_m,S)\}_m$ converges to $(G,S)$ in the Cayley topology and hence the singleton $\{(G,S)\}$ is the Cayley boundary of that sequence.

With the notation above, we shall state our main theorem in this paper, 
which generalizes the above-mentioned result of Willett and Yu for the box spaces. 
\begin{theorem}\label{theorem:main}
Let $\left\{ G^{(m)} \right\}_{m \in \mathbb{N}} 
= \left\{ \left( G^{(m)}, s_1^{(m)}, s_2^{(m)}, \ldots, s_k^{(m)} \right) \right\}_{m \in \mathbb{N}}$ 
be a sequence of finite $k$-marked groups, and 
$X=\bigsqcup_{m \in \mathbb{N}} \mathrm{Cay} \left(G^{(m)}, s_1^{(m)}, \ldots, s_k^{(m)} \right)$
be their disjoint union. Then, the following are equivalent.
\begin{enumerate}
\item\label{cond:maincaybdy}
Every member of $\partial_{\mathrm{Cay}}(\{G^{(m)}\}_m)$ has property $\mathrm{(T)}$ of Kazhdan.
\item\label{cond:maingeot}
The metric space $X$ has geometric property $\mathrm{(T)}$.
\item\label{cond:maincoht}
The metric space $X$ has cohomological property $\mathrm{(T)}$.
\end{enumerate}
\end{theorem}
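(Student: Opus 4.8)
The plan is to close the cycle $(\ref{cond:maincoht}) \Rightarrow (\ref{cond:maingeot}) \Rightarrow (\ref{cond:maincaybdy}) \Rightarrow (\ref{cond:maincoht})$. The implication $(\ref{cond:maincoht}) \Rightarrow (\ref{cond:maingeot})$ is the general fact, proved in Section~\ref{section:cohomologicalT}, that cohomological property $(\mathrm{T})$ forces geometric property $(\mathrm{T})$ for any bounded-geometry coarse space, so no extra work is needed there. The remaining two implications are where the Cayley-boundary correspondence enters, exactly as in Parts I and II.

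For $(\ref{cond:maingeot}) \Rightarrow (\ref{cond:maincaybdy})$ I would pass through the coarse groupoid $\mathcal{G}(X)$ of Skandalis--Tu--Yu. Its maximal $C^{*}$-algebra is the maximal uniform Roe algebra of $X$, and its isotropy groups over the Stone--\v{C}ech corona are precisely the members of $\partial_{\mathrm{Cay}}(\{G^{(m)}\}_m)$. Geometric property $(\mathrm{T})$ says that the Laplacian $\Delta = \sum_{i=1}^{k}(1-u_i)^{*}(1-u_i)$, built from the partial translations $u_i$ implementing the markings, satisfies a spectral gap $\Delta \geq \varepsilon(1-p)$ in the maximal norm, with $p$ the projection onto the locally constant functions. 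Restricting a cyclic representation to a boundary fibre $H$ sends $\Delta$ to the group Laplacian $\Delta_H = \sum_i (1-t_i)^{*}(1-t_i)$ in $C^{*}_{\max}(H)$ and $p$ to the projection onto $H$-invariant vectors, so the gap descends to $\Delta_H \geq \varepsilon(1-p_0)$ in $C^{*}_{\max}(H)$; this is Kazhdan's property $(\mathrm{T})$ for $H$.

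The heart of the argument is $(\ref{cond:maincaybdy}) \Rightarrow (\ref{cond:maincoht})$, and here the key tool is the sum-of-squares (algebraic) description of property $(\mathrm{T})$: a marked group $H$ has property $(\mathrm{T})$ if and only if $\Delta_H^{2} - \varepsilon \Delta_H$ is a sum of Hermitian squares in the real group algebra $\mathbb{R}[H]$ for some $\varepsilon > 0$. Such a certificate is \emph{finitely supported}, hence the condition is \emph{Cayley-open}: any marked group whose multiplication agrees with $H$'s on a large enough ball inherits the very same identity and thus has property $(\mathrm{T})$ with the same $\varepsilon$. Assuming $(\ref{cond:maincaybdy})$, every boundary group carries such a certificate; by compactness of $\partial_{\mathrm{Cay}}(\{G^{(m)}\}_m)$ I can fix a single radius and a single $\varepsilon_0 > 0$ working for all of them. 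Since this open condition contains the whole (compact) boundary, its complement inside $\overline{\{G^{(m)}\}}^{\mathrm{Cayley}}$ is a compact subset of the discrete part, hence finite; so all but finitely many of the $G^{(m)}$ satisfy the same certificate, and the finitely many exceptions contribute a gap trivially. This yields one uniform identity $\Delta^{2} - \varepsilon_0 \Delta = \sum_i \eta_i^{*}\eta_i$ in the finite-propagation $\ast$-algebra of $X$, with the $\eta_i$ of uniformly bounded propagation. Such a uniform identity is the quantitative form of Delorme--Guichardet vanishing: it bounds every coarse $1$-cocycle by a coboundary, i.e.\ it gives cohomological property $(\mathrm{T})$, and pushed through the maximal completion it re-proves geometric property $(\mathrm{T})$ as well.

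I expect the main obstacle to be the uniformity step inside $(\ref{cond:maincaybdy}) \Rightarrow (\ref{cond:maincoht})$. Property $(\mathrm{T})$ is \emph{not} closed under Cayley limits, so the naive move of ultralimiting almost-invariant vectors from a hypothetical family $H_j$ with shrinking Kazhdan constants fails: the limit vector is invariant and yields no contradiction with property $(\mathrm{T})$ of the limit group. The sum-of-squares certificate is precisely what circumvents this, converting property $(\mathrm{T})$ into a finitely checkable, Cayley-open condition so that compactness of the boundary delivers genuine uniformity. Setting up the coarse groupoid dictionary carefully enough that the finitely-supported certificate transports to a bounded-propagation identity on $X$, and conversely that the boundary fibres are exactly the Cayley limit groups, is the technical core on which both nontrivial implications rest.
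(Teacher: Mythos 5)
Your cycle $(\ref{cond:maincoht})\Rightarrow(\ref{cond:maingeot})\Rightarrow(\ref{cond:maincaybdy})\Rightarrow(\ref{cond:maincoht})$ is a legitimate overall plan, and the first implication is indeed the general fact from Section~\ref{section:cohomologicalT}. But both of the remaining implications have genuine gaps. For $(\ref{cond:maingeot})\Rightarrow(\ref{cond:maincaybdy})$, your fiber picture is essentially the paper's Theorem~\ref{theorem:rep}: $\mathrm{C}^*_{\mathrm{u,max}}[X]$ is a $C(\beta{\mathbb N})$-$\mathrm{C}^*$-algebra whose fiber at $\omega$ is $\ell_\infty(X)_\omega\rtimes_{\max}G^{(\omega)}$, and the $G^{(\omega)}$ realize the Cayley limit groups. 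The problem is the step ``the gap descends to $\Delta_H\geq\varepsilon(1-p_0)$ in $C^*_{\max}(H)$.'' Restricting representations of the (fiber of the) maximal Roe algebra to $H$ only produces the gap in those unitary representations of $H$ that \emph{extend} to the fiber algebra; property $(\mathrm{T})$ requires the gap in \emph{all} unitary representations. What is needed is faithfulness of the canonical map $\mathrm{C}^*_{\mathrm{max}}[G^{(\omega)}]\to \ell_\infty(X)_\omega\rtimes_{\max}G^{(\omega)}$, which fails for general crossed products; the paper obtains it from the finiteness (hence amenability) of the $G^{(m)}$'s, whose invariant means assemble into a $G^{(\omega)}$-invariant state on $\ell_\infty(X)_\omega$ and yield a conditional expectation (corollary to Theorem~\ref{theorem:rep}). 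The same ingredient appears as Lemma~\ref{lemma: square sum} in the paper's elementary proof in Section~\ref{sec:geomTtoT}. Your sketch never invokes finiteness or amenability at all, so this is a missing idea, not a technicality.

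For $(\ref{cond:maincaybdy})\Rightarrow(\ref{cond:maincoht})$, the compactness argument with Ozawa sum-of-squares certificates (Theorem~\ref{theorem:noepsilon}) is sound as far as it goes: the certificate is indeed Cayley-open, the finitely many exceptional finite groups cause no trouble (finite marked groups are isolated points of $\mathcal{G}(k)$), and the assembled identity $\Delta^2-\varepsilon_0\Delta\in\Sigma^2{\mathbb C}_{\mathrm u}[X]$ proves \emph{geometric} property $(\mathrm{T})$ --- this is a nice quantitative variant of the paper's Section~\ref{Section:TtogeomT}, which instead uses Shalom's theorem and Lemma~\ref{lemma:finitely many RF (T)} to push the Laplacians of finitely presented $(\mathrm{T})$ groups $\Gamma_i$ through quotient maps. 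The gap is your final claim that this identity ``bounds every coarse $1$-cocycle by a coboundary.'' No formal spectral-gap manipulation can do this: the paper itself exhibits $\mathrm{Cay}(F_2)$, which has geometric property $(\mathrm{T})$ (equivalently $\Delta^2-\nu\Delta+\varepsilon\in\Sigma^2$ for every $\varepsilon>0$) but \emph{not} cohomological property $(\mathrm{T})$. Concretely, a gap lets you correct a cocycle $\theta$ by a coboundary so that $\theta(\Delta)=0$, but ${\mathcal L}_{\mathrm u}[X]$ is not the left ideal generated by $\Delta$, so nonzero ``harmonic'' cocycles may survive; killing them is exactly the content of Delorme--Guichardet, which is a theorem about groups, not about sums of squares. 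The paper's route is correspondingly group-theoretic: by Lemma~\ref{lemma:finitely many RF (T)} it splits $X$ into finitely many pieces whose coarse structure is induced by an action of a finitely presented property $(\mathrm{T})$ group $\Gamma_i$, and then applies the classical Delorme--Guichardet theorem to $\Gamma_i$ to show every coarse $1$-cocycle is a coboundary (second bullet of the last theorem in Section~\ref{section:cohomologicalT}). If you wish to keep your route, you must prove that the exact ($\varepsilon$-free) sum-of-squares identity implies cohomological property $(\mathrm{T})$, carefully distinguishing it from the $\varepsilon$-perturbed identities that $\mathrm{Cay}(F_2)$ satisfies; nothing in your sketch addresses this, and it is the crux of the implication.
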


Theorem~\ref{theorem:main} provides us with the following corollary, which generalizes the result of Margulis mentioned above:

\begin{corollary}\label{corollary:expander}
Let $\left\{ G^{(m)} \right\}_{m \in \mathbb{N}} 
= \left\{ \left( G^{(m)}, s_1^{(m)}, s_2^{(m)}, \ldots, s_k^{(m)} \right) \right\}_{m \in \mathbb{N}}$ 
be a sequence of finite $k$-marked groups. 
If every member of $\partial_{\mathrm{Cay}}(\{G^{(m)}\}_m)$ has property $\mathrm{(T)}$, then the sequence of Cayley graphs $\left\{\mathrm{Cay}\left( G^{(m)}, s_1^{(m)}, s_2^{(m)}, \ldots, s_k^{(m)} \right)\right\}_m$ forms an expander family.
\end{corollary}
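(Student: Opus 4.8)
The plan is to deduce this statement as an essentially immediate consequence of Theorem~\ref{theorem:main}, combined with the fact, already noted in the introduction and due to Willett--Yu, that geometric property $(\mathrm{T})$ is formally stronger than being an expander sequence. The hypothesis of the corollary---that every member of $\partial_{\mathrm{Cay}}(\{G^{(m)}\}_m)$ has property $(\mathrm{T})$---is exactly condition~(\ref{cond:maincaybdy}) of Theorem~\ref{theorem:main}. Applying the theorem, I obtain condition~(\ref{cond:maingeot}): the disjoint union $X$ has geometric property $(\mathrm{T})$. It then remains only to unwind the definitions and confirm that geometric property $(\mathrm{T})$ of $X$ forces the defining spectral-gap estimate of an expander family.

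First I would record the elementary graph-theoretic facts needed to even speak of an expander family. Each $\mathrm{Cay}\left(G^{(m)}, s_1^{(m)}, \ldots, s_k^{(m)}\right)$ is uniformly locally finite, since every vertex has degree at most $2k$; it is finite by assumption; and it is connected, because by the definition of a $k$-marked group the elements $s_1^{(m)}, \ldots, s_k^{(m)}$ generate $G^{(m)}$. Hence the only substantive content of being an expander family is a uniform lower bound, independent of $m$, on the first positive eigenvalue of the combinatorial Laplacian $\Delta_m$ on $\mathrm{Cay}\left(G^{(m)}\right)$. To produce this gap I would specialize the spectral-gap statement encoded by geometric property $(\mathrm{T})$ of $X$---a uniform gap for a Laplacian-type operator holding across all the relevant representations---to the representations arising from the individual components $\ell^2\left(G^{(m)}\right)$, which recovers precisely the assertion that the $\Delta_m$ have first positive eigenvalues bounded away from $0$ uniformly in $m$. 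This is the implication ``geometric property $(\mathrm{T})$ $\Rightarrow$ expander'' of Willett--Yu, and together with the preceding observations it exhibits $\left\{\mathrm{Cay}\left(G^{(m)}\right)\right\}_m$ as an expander family.

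Since all the substantive work is absorbed into Theorem~\ref{theorem:main}, there is no genuine obstacle remaining. The single point that demands a little care is to verify that the spectral-gap formulation of geometric property $(\mathrm{T})$ for the coarse disjoint union $X$ specializes, representation by representation, to the componentwise Laplacian gap, rather than merely to a weaker averaged estimate over the components; once this routine bookkeeping is carried out, the corollary follows.
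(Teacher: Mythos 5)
Your proposal is correct, but it is not the route the paper takes: the paper explicitly remarks, right after the statement, that the corollary \emph{does not} require the notion of geometric property $(\mathrm{T})$ and instead follows directly from Proposition~\ref{Proposition: Uniform Kazhdan}. There, one applies Lemma~\ref{lemma:finitely many RF (T)} to the compact set $K=\overline{\{G^{(m)}\}_m}^{\mathrm{Cayley}}$ (all of whose members have property $(\mathrm{T})$, the finite groups $G^{(m)}$ trivially so) to get finitely many finitely presented property $(\mathrm{T})$ groups $\Gamma_1,\ldots,\Gamma_n$ with $K\subset\bigcup_i\mathcal{Q}_{\Gamma_i}$; since every representation of a quotient is a representation of $\Gamma_i$, one gets a uniform bound $\nu:=\min_i\nu(\Gamma_i)\le\nu\left(G^{(m)}\right)$ on the spectral gap of $\Delta_{G^{(m)}}$ in $\mathrm{C}^*_{\mathrm{max}}\left[G^{(m)}\right]$, and specializing to the regular representation of each finite $G^{(m)}$ (whose kernel of $\Delta_m$ is the constants, by connectedness) yields the expander bound. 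Your route instead invokes the full implication $(\ref{cond:maincaybdy})\Rightarrow(\ref{cond:maingeot})$ of Theorem~\ref{theorem:main} and then descends from the spectral gap in $\mathrm{C}^*_{\mathrm{u,max}}[X]$ to the components; that descent is legitimate (the projections $1_{G^{(m)}}$ are central in $\mathbb{C}_{\mathrm{u}}[X]$ because finite-propagation kernels are block-diagonal over the components, so the representation on $\ell_2(X)=\bigoplus_m\ell_2\left(G^{(m)}\right)$ restricts componentwise), and it makes your ``little care'' step genuinely routine. What you lose is economy: both arguments rest on the same compactness input (Lemma~\ref{lemma:finitely many RF (T)}), but yours additionally imports the machinery of the maximal uniform Roe algebra and Proposition~\ref{lemma: spectral gap}, whereas the paper's argument never leaves group $\mathrm{C}^*$-algebras and, as a bonus, isolates the quantitative uniform-Kazhdan statement (Proposition~\ref{Proposition: Uniform Kazhdan}) as a result of independent interest.
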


In fact, the proof of Corollary~\ref{corollary:expander} does not require 
the notion of geometric property $\mathrm{(T)}$ and follows directly from 
the intermediate result, Proposition~\ref{Proposition: Uniform Kazhdan}. 
See also \cite{MimuraSakoIV} for a quantitative version of Corollary~\ref{corollary:expander}. 
The proof of Theorem~\ref{theorem:main} is scattered in this paper: 
$(\ref{cond:maingeot})\Rightarrow(\ref{cond:maincaybdy})$ is proved in Section~\ref{sec:geomTtoT},
the converse in Section~\ref{Section:TtogeomT},
and $(\ref{cond:maingeot})\Leftrightarrow(\ref{cond:maincoht})$ in Section~\ref{section:cohomologicalT}.
In these proofs, we have avoided the technical aspects of $\mathrm{C}^*$-algebra theory. 
Instead, they are organized at the end of this paper (Section~\ref{section:reptheory}), 
where we study the structure of the maximal uniform Roe algebra in detail. 

\subsection*{Acknowledgments}
This work gained momentum when the authors attended the conference 
``Metric Geometry and Analysis'' held at Kyoto University in December 2013, 
and was finished during the first and second-named authors were visiting 
Institut Henri Poncar\'e in the spring 2014 for the trimester on 
``Random Walks and Asymptotic Geometry of Groups.'' We gratefully 
acknowledge their hospitality and stimulating environments. 
The authors benefitted from various comments and suggestions by Romain Tessera and Rufus Willett. 
The authors would like to thank Shin-ichi Oguni for providing us with literature 
and information on (co)homology theory in coarse geometry. 
The first-named author was supported by JSPS Grant-in-Aid for Young Scientists (B), no.~25800033.
The second-named author was supported by JSPS KAKENHI Grant Number 23540233.
The fourth-named author was supported by JSPS Research Fellow, no.~25-7810, and 
the MEXT Program of Leading Graduate Schools. 

\section{Preliminaries}

\subsection{\bf Coarse equivalence}\label{subsection:coarseequivalence}

Recall from Definition 1.8 in \cite{RoeLectureNote}
that a map $f\colon X\to Y$ between metric spaces is said to be 
\emph{uniformly bornologous} if $\sup\{ d(f(x),f(y)) \mid d(x,y)\le R\}<\infty$ 
for every $R>0$. Two maps $f_i\colon X\to Y$ are \emph{close} if 
$\sup_x d(f_1(x),f_2(x))<\infty$. 
The two metric spaces $X$ and $Y$ are \emph{coarsely equivalent} 
if there are uniformly bornologous maps $f\colon X\to Y$ and $g\colon Y\to X$ 
such that $g\circ f$ and $f\circ g$ are close to $\mathrm{id}_X$ and $\mathrm{id}_Y$, 
respectively. 
Thus, for every $n\in{\mathbb N}$, 
the $n$-point metric space $\mathbf{n}=\{1,\ldots,n\}$ (say, $d(i,j)=|i-j|$) 
is coarsely equivalent to a point, and the metric spaces $Y$ and $Y\times\mathbf{n}$ 
(say, $d((y,i),(z,j))=d(y,z)+|i-j|)$ are coarsely equivalent, via the inclusion 
$\iota\colon Y\hookrightarrow Y\times\{1\}$ 
and the projection $\mathrm{pr}\colon Y\times\mathbf{n}\to Y$. 
Every coarse equivalence roughly arises in this way. 

\begin{lemma}\label{lemma:coarseequivstr}
If $f\colon X\to Y$ is a coarse equivalence between uniformly locally finite metric spaces $X$ and $Y$, 
then there are $n\in{\mathbb N}$ and an injective and uniformly bornologous map $\tilde{f}\colon X \to Y\times\mathbf{n}$ 
such that $f=\mathrm{pr}\circ\tilde{f}$.
\end{lemma}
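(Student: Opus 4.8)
The plan is to observe that, since $\mathrm{pr}\colon Y\times\mathbf{n}\to Y$ is the coordinate projection, the requirement $f=\mathrm{pr}\circ\tilde{f}$ forces $\tilde{f}$ to have the form $\tilde{f}(x)=(f(x),\varphi(x))$ for some function $\varphi\colon X\to\mathbf{n}$. Thus the whole problem reduces to producing a suitable $n$ and a suitable labelling $\varphi$. Two observations simplify the task. First, for the metric $d((y,i),(z,j))=d(y,z)+|i-j|$ we have $d(\tilde{f}(x),\tilde{f}(x'))=d(f(x),f(x'))+|\varphi(x)-\varphi(x')|$, and since the second term never exceeds $n-1$ while the first is controlled by the uniform bornologousness of $f$, the map $\tilde{f}$ is automatically uniformly bornologous, regardless of the choice of $\varphi$. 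Second, $\tilde{f}$ is injective precisely when $\varphi$ is injective on each fiber $f^{-1}(y)$, because two points with the same $\tilde{f}$-value must share both their $f$-value (the same fiber) and their $\varphi$-value. Hence it suffices to find an $n$ and a $\varphi$ that separates points inside every fiber.

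The one genuine input is a uniform bound on the cardinality of the fibers of $f$, and this is where coarse invertibility and uniform local finiteness enter. Let $g\colon Y\to X$ be a coarse inverse of $f$ and put $C:=\sup_x d(g(f(x)),x)$, which is finite because $g\circ f$ is close to $\mathrm{id}_X$. If $x\in f^{-1}(y)$, then $g(f(x))=g(y)$, so $d(x,g(y))\le C$; that is, the entire fiber $f^{-1}(y)$ is contained in the ball of radius $C$ about $g(y)$ in $X$. Since $X$ is uniformly locally finite, the cardinalities of all balls of radius $C$ are bounded by a single integer $n$, and therefore $|f^{-1}(y)|\le n$ for every $y\in Y$.

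With this $n$ in hand, I would construct $\varphi$ by colouring each fiber separately: the fibers of $f$ partition $X$ into pieces of size at most $n$, so on each fiber one enumerates its (at most $n$) elements and assigns to them the labels $1,2,\ldots$ from $\mathbf{n}$, these choices being made independently across distinct fibers. The resulting $\varphi\colon X\to\mathbf{n}$ is injective on every fiber by construction, whence $\tilde{f}(x)=(f(x),\varphi(x))$ is injective; it satisfies $\mathrm{pr}\circ\tilde{f}=f$ and is uniformly bornologous by the first paragraph. There is no real obstacle here: the content of the lemma is exactly the fiber-size bound of the second paragraph, and the labelling step is purely combinatorial.
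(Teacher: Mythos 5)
Your proof is correct and follows essentially the same route as the paper: bound the cardinality of the fibers $f^{-1}(y)$ via a coarse inverse $g$ together with uniform local finiteness of $X$, then label points within each fiber to force injectivity of $\tilde{f}(x)=(f(x),\varphi(x))$. The only cosmetic difference is that the paper invokes a matching theorem to produce the finite partition $X=\bigsqcup_{i=1}^n X_i$ on which $f$ is injective, whereas you construct the same partition directly (your sets $\varphi^{-1}(i)$) by enumerating each fiber independently, which suffices here precisely because the fibers are pairwise disjoint.
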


\begin{proof}
Since $\{f^{-1}(y)\}_y$ are uniformly bounded subsets of a uniformly locally finite metric space $X$,
a matching theorem yields a finite partition $X=\bigsqcup_{i=1}^n X_i$ such that $f$ is injective on each of 
$X_i$'s. Define $\tilde{f}(x)=(f(x),i)$ for $x\in X_i$ and we are done.
\end{proof}

\subsection{\bf Cayley topology}\label{subsection: Cayley topology}
The space $\mathcal{G}(k)$ of $k$-marked groups and the Cayley topology on it 
enable us to regard a finitely generated group as a point in a topological space.

Let $(G, s_1, s_2, \ldots, s_k)$ be a $(k + 1)$-tuple of a group $G$ and its generators $s_1, \ldots s_k$.
We call such an object a \textit{$k$-marked group}.
Throughout this paper, we use the following terminologies:
\begin{itemize}
\item
A $k$-marked group $(H, s'_1, s'_2, \ldots, s'_k)$ is called a {\it quotient} of $(G, s_1, s_2, \ldots, s_k)$ 
if there exists a group homomorphism $\phi \colon G \to H$
satisfying that $\phi(s_j) = s'_j$ for all $j = 1, \ldots, k$.
Note that every member of $\mathcal{G}(k)$ is a quotient of $(F_k, a_1, a_2, \ldots, a_k)$, 
where $F_k$ is the free group generated by $a_1, \ldots, a_k$.
\item
If the above homomorphism $\phi$ is isomorphic, then
two $k$-marked groups $(G, s_1, s_2, \ldots, s_k)$ and $(H, s'_1, s'_2, \ldots, s'_k)$ are said to be {\it isomorphic}.
\item
A $k$-marked group $(G, s_1, s_2, \ldots, s_k)$ is said to be {\it finitely presented} if there exists a finite subset $A \subset F_k$
such that the minimal normal subgroup $N \subset F_k$ containing $A$ is the kernel of the quotient map $\phi \colon (F_k, a_1, \ldots, a_k) \to (G, s_1, \ldots, s_k)$.
\end{itemize}
We note that the finite presentability of a group is independent of the choices of markings (see V.2 in \cite{delaHarpeGGT}). 
We denote by $\mathcal{G}(k)$ the set of the isomorphism classes of $k$-marked groups 
and call it the \emph{space of $k$-marked group} or the Cayley topological space.
A natural topology on $\mathcal{G}(k)$ was introduced by Grigorchuk in \cite{GrigorchukIntermediate}.
We call it the \textit{Cayley topology}.
For details, see the book \cite[Section V.10]{delaHarpeGGT} of de la Harpe.
We also recall the notations used in the part I paper \cite{MimuraSakoI} 
by the first-named and the third-named authors.
The topology is generated by relations. More precisely,
it is generated by closed and open subsets
\[O \left( s_{j(1)}^{\epsilon(1)} s_{j(2)}^{\epsilon(2)} \cdots s_{j(n)}^{\epsilon(n)} \right)
= \left\{ (G, s_1, s_2, \ldots, s_k) \in \mathcal{G}(k) \bigm| \ s_{j(1)}^{\epsilon(1)} s_{j(2)}^{\epsilon(2)} \cdots s_{j(n)}^{\epsilon(n)} = 1_G \right\}.
\]
Here, $s_{j(l)}$ is one of the generators $\{s_1, s_2, \ldots, s_k\}$ and $\epsilon(l)$ is nothing or $-1$.
The resulting topology is known to be Hausdorff (in fact metrizable) and compact.
Two $k$-marked groups are close if the balls with large radius of their Cayley graphs are identical.
We may regard the following as a definition.

\begin{lemma}[Lemma 2.1 in \cite{MimuraSakoII}]
\label{lemma: partial isomorphism}
Let $G = (G, s_1, \ldots, s_k)$ be a $k$-marked group and $R$ be a constant greater than $1$.
Let $N(G, R)$ be the subset of $\mathcal{G}(k)$ consisting of $k$-marked groups $(H, s'_1, \ldots, s'_k)$ satisfying that
there exists a bijection 
$\phi \colon B(1_H, 2R) \to B(1_{G}, 2R)$ from the ball of $H$ to that of $G$ with radius $2R$ such that
\begin{itemize}
\item
$\phi(s'_j) = s_j$, for every $j = 1, \ldots, k$,
\item
$\phi(g^{-1}) = \phi(g)^{-1}$, for every $g \in B(1_H, 2R)$,
\item
$\phi(gh) = \phi(g)\phi(h)$, for every $g, h \in B(1_H, R)$.
\end{itemize}
The subsets $\{N(G, R)\}_R$ form a neighborhood system of $G \in \mathcal{G}(k)$. 
\end{lemma}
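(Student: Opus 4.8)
The plan is to sandwich each $N(G,R)$ between two members of the standard neighborhood basis of $G$ arising directly from the subbasic clopen sets $O(\cdot)$. For an integer $M\ge 0$ write $V_M(G)=\{H\in\mathcal{G}(k)\colon G\in O(w)\Leftrightarrow H\in O(w)\text{ for every word }w\text{ with }|w|\le M\}$. Since the Cayley topology is generated by the clopen sets $O(w)$, each $V_M(G)$ is open (a finite intersection of the sets $O(w)$ and their complements over the finitely many words of length $\le M$), contains $G$, satisfies $V_{M'}(G)\subseteq V_M(G)$ for $M'\ge M$, and every neighborhood of $G$ contains some $V_M(G)$. Throughout I would work with the evaluation maps $\pi_G\colon F_k\to G$ and $\pi_H\colon F_k\to H$ sending a word to the element it spells; recall $\pi_G$ carries words of length $\le\ell$ onto $B(1_G,\ell)$, and that $G\in O(w)$ means exactly $\pi_G(w)=1_G$. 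It then suffices to prove inclusions of the shape $V_{c_2R}(G)\subseteq N(G,R)\subseteq V_{c_1R}(G)$ with linear bounds.

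For the easy inclusion I would show that agreement of relations forces the partial isomorphism. Put $r=\lfloor R\rfloor\ (\ge 1)$ and assume $H\in V_{2\lfloor 2R\rfloor}(G)$. Define $\phi\colon B(1_H,2R)\to B(1_G,2R)$ by $\phi(\pi_H(u))=\pi_G(u)$ for words $u$ of length $\le\lfloor 2R\rfloor$. This is well defined and injective, since for two such words $\pi_H(u)=\pi_H(v)\Leftrightarrow\pi_H(uv^{-1})=1_H\Leftrightarrow\pi_G(uv^{-1})=1_G\Leftrightarrow\pi_G(u)=\pi_G(v)$, the middle equivalence being membership in $V_{2\lfloor 2R\rfloor}(G)$ as $|uv^{-1}|\le 2\lfloor 2R\rfloor$; it is onto by definition of the ball, hence a bijection. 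The three defining properties of $N(G,R)$ follow from $\pi_G(a_j)=s_j$, $\pi_G(u^{-1})=\pi_G(u)^{-1}$, and $\pi_G(uv)=\pi_G(u)\pi_G(v)$, the last invoked only for $|u|,|v|\le r$ so that $uv$ still has length $\le 2r\le\lfloor 2R\rfloor$. This gives $V_{2\lfloor 2R\rfloor}(G)\subseteq N(G,R)$, so in particular each $N(G,R)$ is a neighborhood of $G$.

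The heart of the argument, and the step I expect to be the main obstacle, is the reverse inclusion: any $\phi$ witnessing $H\in N(G,R)$ must intertwine the evaluation maps, $\phi\circ\pi_H=\pi_G$, on words of length $\le 2r$. The difficulty is that multiplicativity of $\phi$ is hypothesized only on the small ball $B(1_H,R)=B(1_H,r)$, while one wants to control the ball of radius $2R$. I would argue by induction on word length. First $\phi(1_H)=1_G$, from $\phi(1_H)=\phi(1_H\cdot 1_H)=\phi(1_H)^2$, and $\phi((s'_j)^{\pm1})=s_j^{\pm1}$ from the generator and inverse hypotheses. For $|u|\le r$ one peels off one generator, $u=u'c$ with $|u'|\le r-1$ and $c$ a generator, so both $\pi_H(u')$ and $\pi_H(c)$ lie in $B(1_H,r)$ and multiplicativity applies, giving $\phi(\pi_H(u))=\pi_G(u')\,\pi_G(c)=\pi_G(u)$. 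For $r<|u|\le 2r$ one instead splits $u=u_1u_2$ into two halves each of length $\le r$ (possible precisely because the outer radius $2r$ is twice the inner radius $r$), so both factors lie in $B(1_H,r)$ and one further application of multiplicativity yields $\phi(\pi_H(u))=\pi_G(u)$. With $\phi\circ\pi_H=\pi_G$ on words of length $\le 2r$ and $\phi$ injective, one reads off $\pi_H(uv^{-1})=1_H\Leftrightarrow\pi_G(uv^{-1})=1_G$ for all $u,v$ of length $\le 2r$, i.e.\ agreement of all relations of length $\le 4r$; hence $N(G,R)\subseteq V_{4\lfloor R\rfloor}(G)$.

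Finally I would assemble the neighborhood-basis claim. Each $N(G,R)$ contains the open set $V_{2\lfloor 2R\rfloor}(G)\ni G$, hence is a neighborhood of $G$; and given an arbitrary neighborhood $W$ of $G$, choose $M$ with $V_M(G)\subseteq W$ and then any $R$ with $4\lfloor R\rfloor\ge M$, so that $N(G,R)\subseteq V_{4\lfloor R\rfloor}(G)\subseteq V_M(G)\subseteq W$. The only genuinely delicate point is the length bookkeeping in the inductive intertwining step, where one must never invoke multiplicativity outside $B(1_H,R)$; the factor-of-two gap between the radii $2R$ and $R$ is exactly what makes the halving argument go through, and is the reason the statement is phrased with these two radii.
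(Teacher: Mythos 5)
Your proof is correct, but there is nothing in this paper to compare it against: the authors do not prove this lemma here. It is imported from Part II of the series (Lemma 2.1 there), and the sentence preceding it (``We may regard the following as a definition'') indicates that the characterization by partial isomorphisms is simply taken as the working definition of closeness in the Cayley topology. Your argument supplies the missing derivation from the subbasis description: sandwiching $N(G,R)$ between relation-agreement neighborhoods, $V_{2\lfloor 2R\rfloor}(G)\subseteq N(G,R)\subseteq V_{4\lfloor R\rfloor}(G)$, is exactly what is needed, the first inclusion showing each $N(G,R)$ is a neighborhood of $G$, the second showing the family is cofinal among all neighborhoods. Both inclusions check out, and you correctly isolate and handle the one delicate point: multiplicativity of $\phi$ is hypothesized only on $B(1_H,R)$, so the intertwining $\phi\circ\pi_H=\pi_G$ must first be established on words of length at most $\lfloor R\rfloor$ by peeling off generators, then pushed to length $2\lfloor R\rfloor$ by splitting a word into two halves of length at most $\lfloor R\rfloor$, and finally converted into agreement of relations of length up to $4\lfloor R\rfloor$ by writing $w=uv^{-1}$ and invoking injectivity of $\phi$; this is precisely the role of the factor-of-two gap between the radii $R$ and $2R$ in the statement. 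One housekeeping remark: your claim that the sets $V_M(G)$ are open uses that the complements of the sets $O(w)$ are open, i.e.\ that the $O(w)$ are clopen; this matches the paper's setup (the topology is generated by the closed and open subsets $O(w)$, equivalently it is induced from the product topology on $\{0,1\}^{F_k}$), so your starting point is consistent with the definition used here.
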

We call $\phi$ satisfying the three conditions in the lemma a partial isomorphism.

\begin{lemma}\label{lemma: quotients form clopen subset}
Let $\Gamma = (\Gamma, \sigma_1, \ldots, \sigma_k) \in \mathcal{G}(k)$ be a $k$-marked group. 
Assume that $\Gamma$ is finitely presented.
The set of all the quotient groups of $\Gamma$,
\[
{\mathcal Q}_\Gamma = \left\{(G, s_1, \ldots, s_k) \in \mathcal{G}(k) \mid 
\mbox{\normalfont $\exists$ surjective hom $\phi \colon \Gamma \to G$ s.t.\ $\phi(\sigma_j) = s_j$} \right\}, 
\]
is a closed and open subset of $\mathcal{G}(k)$.
\end{lemma}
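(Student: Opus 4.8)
The plan is to reduce membership in $\mathcal{Q}_\Gamma$ to a finite list of word relations, each of which cuts out one of the basic clopen sets $O(\cdot)$ generating the Cayley topology. First I would invoke the finite presentability of $\Gamma$: by definition there is a finite set $A = \{w_1, \ldots, w_r\} \subset F_k$ whose normal closure $N$ is exactly the kernel of the marking quotient $F_k \to \Gamma$, $a_j \mapsto \sigma_j$. I also recall that every $(G, s_1, \ldots, s_k) \in \mathcal{G}(k)$ carries a canonical \emph{surjective} marking homomorphism $\psi_G \colon F_k \to G$ with $\psi_G(a_j) = s_j$.

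The key step is the following reformulation. Since the $s_j$ generate $G$, any homomorphism $\Gamma \to G$ sending $\sigma_j \mapsto s_j$ is automatically surjective, so $\mathcal{Q}_\Gamma$ consists precisely of those $(G, s_1, \ldots, s_k)$ for which $\psi_G$ factors through the quotient $F_k \to \Gamma$. By the universal property of the quotient, such a factorization exists if and only if $N \subseteq \ker \psi_G$. Because $\ker \psi_G$ is a normal subgroup and $N$ is the \emph{normal closure} of $A$, the inclusion $N \subseteq \ker \psi_G$ is equivalent to the finitely many conditions $w_1, \ldots, w_r \in \ker \psi_G$, i.e.\ to $\psi_G(w_i) = 1_G$ for each $i$. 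Writing $w_i$ as a word in $a_1^{\pm 1}, \ldots, a_k^{\pm 1}$ and substituting $s_j$ for $a_j$, this says exactly that $(G, s_1, \ldots, s_k) \in O(w_i)$ in the notation of the Cayley topology.

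Putting these together yields $\mathcal{Q}_\Gamma = \bigcap_{i=1}^{r} O(w_i)$. Since each $O(w_i)$ is closed and open and the intersection is finite, $\mathcal{Q}_\Gamma$ is clopen, as claimed. I do not expect a serious obstacle: the only genuine content is the factoring argument, which hinges on $\ker\psi_G$ being normal, so that containment of the normal closure $N$ is detected by the generators $A$ alone. The role of finite presentability is precisely to make the index set finite — without it one would still obtain the closed set $\bigcap_{w \in N} O(w)$, but openness could fail, so the finiteness of $A$ is exactly the hypothesis that delivers openness.
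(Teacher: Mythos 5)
Your proof is correct and follows the same route as the paper: both identify $\mathcal{Q}_\Gamma$ with the finite intersection $\bigcap_{i=1}^r O(w_i)$ of the basic clopen sets attached to the finitely many defining relators, whence clopenness is immediate. The only difference is that you spell out the verification of this identity (factoring the marking homomorphism $\psi_G$ through $F_k \to \Gamma$ via normality of $\ker\psi_G$), which the paper leaves implicit with ``the intersection is nothing other than $\mathcal{Q}_\Gamma$.''
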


\begin{proof}
For each relation 
$s_{j(1)}^{\epsilon(1)} s_{j(2)}^{\epsilon(2)} \cdots s_{j(n)}^{\epsilon(n)}$ defining $\Gamma$, 
the subset 
\[ O \left( s_{j(1)}^{\epsilon(1)} s_{j(2)}^{\epsilon(2)} \cdots s_{j(n)}^{\epsilon(n)} \right)  \subset \mathcal{G}(k)\]
is open and closed.
So is the finite intersection of such subsets corresponding to the relations in $\Gamma$.
The intersection is nothing other than ${\mathcal Q}_\Gamma$.
\end{proof}

Finally, we record the following fact. 
For the proof, take $\Gamma_R$ to be the marked group quotient of $F_k$ 
by all the relations of $G$ that have length at most $R$.
Note that when $G$ is finitely presented, $\Gamma_R$ eventually coincides with $G$.

\begin{lemma}\label{lemma:convergence from the above}
For every $G = (G, s_1, \ldots, s_k) \in \mathcal{G}(k)$,
there exists a sequence of finitely presented groups 
$\left\{  \Gamma_R = \left( \Gamma_R, s^{(R)}_1, \ldots, s^{(R)}_k \right) \right\}_{R \in \mathbb{N}}$ converging to $G$
such that
\begin{itemize}
\item
$G$ is a quotient of $\Gamma_R$ for every $R$,
\item
$\Gamma_{R + 1}$ is a quotient of $\Gamma_R$ for every $R$.
\end{itemize}
\end{lemma}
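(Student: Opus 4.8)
The plan is to realize the construction indicated above and then verify the three asserted properties in turn; the only genuine content is the convergence, and even that reduces to a single inclusion of normal subgroups. Let $\phi \colon (F_k, a_1, \ldots, a_k) \to (G, s_1, \ldots, s_k)$ be the canonical marked quotient map and put $N = \ker \phi$, so that $N$ is precisely the set of words in $F_k$ representing $1_G$. For each $R \in \mathbb{N}$, let $W_R = \{ w \in N \mid |w| \le R \}$, where $|\cdot|$ is the word length in $F_k$, and let $N_R \trianglelefteq F_k$ be the normal closure of $W_R$. I set $\Gamma_R = F_k / N_R$ and give it the induced marking $s^{(R)}_j = a_j N_R$.

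First I would note that each $\Gamma_R$ is finitely presented: the $R$-ball of $F_k$ is finite, so $W_R$ is a finite set whose normal closure is $N_R$ by definition. The two quotient conditions are then purely formal. Since every element of $W_R$ lies in the normal subgroup $N$, we have $N_R \subseteq N$, whence the identity on $F_k$ descends to a surjective marked homomorphism $\Gamma_R = F_k/N_R \twoheadrightarrow F_k/N = G$ sending $s^{(R)}_j$ to $s_j$; thus $G$ is a quotient of $\Gamma_R$. Similarly $W_R \subseteq W_{R+1}$ gives $N_R \subseteq N_{R+1}$ and hence a surjective marked homomorphism $\Gamma_R \twoheadrightarrow \Gamma_{R+1}$, so $\Gamma_{R+1}$ is a quotient of $\Gamma_R$.

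It remains to prove $\Gamma_R \to G$ in the Cayley topology. Since the clopen sets $O(w)$ generate the topology, it suffices to check that for each word $w$ the membership $\Gamma_R \in O(w)$ eventually agrees with $G \in O(w)$. Now $G \in O(w)$ means $w \in N$ and $\Gamma_R \in O(w)$ means $w \in N_R$. If $w \in N$ then $w \in W_R \subseteq N_R$ as soon as $R \ge |w|$, so $\Gamma_R \in O(w)$ eventually; if $w \notin N$ then $w \notin N_R$ for every $R$ by the inclusion $N_R \subseteq N$, so $\Gamma_R \notin O(w)$ for all $R$. In both cases the two memberships eventually agree, giving convergence. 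I expect no real obstacle here: the whole argument hinges on the inclusion $N_R \subseteq N$, which says that truncating the relations of $G$ introduces no spurious identities and so forces $\Gamma_R$ to approximate $G$ from above. Finally, when $G$ is finitely presented with defining relations of length at most $L$, these relations lie in $W_R$ for $R \ge L$, whence $N_R = N$ and $\Gamma_R = G$ for all such $R$, recovering the remark preceding the statement.
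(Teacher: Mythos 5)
Your proof is correct and uses exactly the construction the paper indicates: $\Gamma_R$ is the quotient of $F_k$ by the normal closure of all relations of $G$ of length at most $R$, with the inclusion $N_R \subseteq N$ driving both the quotient properties and the convergence. The paper leaves the verification as a one-line hint; you have simply filled in the routine details, including the correct check of convergence on the subbasic clopen sets $O(w)$.
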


\subsection{Marked groups as metric spaces}\label{subsection: marked groups and ms}

Let $(X, d)$ be a metric space (for which we allow $d$ to take the value $\infty$).
The space $X$ is said to be \textit{uniformly locally finite} (or to have \textit{bounded geometry}) 
if $\sup_{x \in X} \sharp(B(x, R)) < \infty$ for every $R > 0$.
Throughout this paper, $B(x, R)$ denotes the closed ball with radius $R$ whose center is $x$.

Marked groups have provided interesting examples in metric geometry.
For a $k$-marked group $(G, s_1, \ldots, s_k)$,
the word metric $d \colon G \times G \to [0, \infty)$
is defined as follows
\[
d(g, h) = 
\min \left\{n \Bigm| 
\begin{array}{c}
\exists n, \exists j(1) ,\ldots, j(n), \exists \epsilon(1), \ldots, \epsilon(n) \in \{1, -1\},\\
g h^{-1} = s_{j(1)}^{\epsilon(1)} s_{j(2)}^{\epsilon(2)} \cdots s_{j(n)}^{\epsilon(n)}
\end{array}
\right\} 
\]
(or zero if $g=h$).
Note that we are using the right-invariant word metric. 
In this way, the group $G$ becomes a uniformly locally finite metric space.
(It is the vertex set of the Cayley graph equipped with the edge metric.) 
We denote this metric space by $\mathrm{Cay}(G) = \mathrm{Cay}(G, s_1, \ldots, s_k)$.
The coarse equivalence class of the metric on $\mathrm{Cay}(G)$ is independent of 
the choice of generating subset $\{s_1,\ldots,s_k\}$. 

The subject of this paper is a metric space of the form
\[
X=\bigsqcup_{m \in \mathbb{N}} \mathrm{Cay} \left(G^{(m)}, s_1^{(m)}, \ldots, s_k^{(m)} \right).
\]
where $\left\{\left(G^{(m)}, s_1^{(m)}, \ldots, s_k^{(m)} \right)\right\}_m$ is a sequence 
of finite $k$-marked groups. 
Note that the coarse structure of $X$ does depend on the choice of 
a family $\{ s_1^{(m)}, \ldots, s_k^{(m)} \}_m$ of generating subsets. 
For a disjoint union $X=\bigsqcup_m X^{(m)}$ of metric spaces, 
it is customary in coarse geometry to put a metric $d$ on $X$ in such a way that 
it coincides with the original metric on each of $X^{(m)}$'s and that 
$d(X^{(m)},X^{(n)})\to\infty$ as $|m-n|(m+n)\to\infty$.
Such a metric is unique up to coarse equivalence. 
We call this metric space the \emph{coarse disjoint union} of $\{X^{(m)}\}_m$ 
and denote it by $\coprod_m X^{(m)}$.
However, it is more convenient to allow our metric $d$ to take the value $\infty$, 
and define the distance between two points of distinct components to be $\infty$. 
We call this (generalized) metric space the \emph{disjoint union} of $\{X^{(m)}\}_m$ 
and denote it simply by $\bigsqcup_m X^{(m)}$, as we deal with it most of the time 
throughout this paper. These two notions of a disjoint union do not make much 
difference and the precise relation between these will be described in the end 
of Section~\ref{section:reptheory}.

\subsection{Algebraic uniform Roe algebra and group algebra}

For a uniformly locally finite metric space $(X, d)$,
the algebraic uniform Roe algebra $\mathbb{C}_\mathrm{u} [X]$ is defined to be 
the collection of all matrices indexed by $X$ whose propagation is finite. More precisely,
\[
\mathbb{C}_\mathrm{u} [X]
 = \left\{[a_{x, y}]_{x, y \in X} \Bigm| \sup_{x, y} |a_{x, y}| < \infty
\mbox{ and }\mathrm{prop}(a)<\infty\right\}, 
\]
where $\mathrm{prop}(a)=\sup \{d(x, y) \ |\ a_{x, y} \neq 0\}$ is the \emph{propagation} 
of a matrix $a = [a_{x, y}]_{x, y \in X}$.
Usual multiplication between two matrices defines the product on $\mathbb{C}_\mathrm{u} [X]$.
The conjugate transpose $a \mapsto a^* = \left[ \overline{a_{y, x}} \right]_{x, y \in X}$ defines the involution $*$ on the algebra.
The diagonal algebra of $\mathbb{C}_\mathrm{u} [X]$ is canonically isomorphic to 
the algebra $\ell_\infty(X)$ of bounded functions and hence simply denoted by $\ell_\infty(X)$. 

Recall that $t=[t_{x,y}]_{x,y}\in{\mathbb C}_{\mathrm u}[X]$ is called a \emph{partial translation} 
if there is a bijection $\phi_t$ from a subset $\mathop{\mathrm{dom}}(\phi_t)\subset X$ 
onto a subset $\mathop{\mathrm{ran}}(\phi_t)\subset X$ such that 
$t_{x,y}=1$ if $y\in A$ and $x=\phi_t(y)$ else $t_{x,y}=0$. 
We will identify the partial translation $t$ with the partially-defined bijection $\phi_t$. 
A partial translation which is a bijection on $X$ is called a \emph{full translation}. 
We record the following well-known fact as a lemma.

\begin{lemma}\label{lemma:algroestr}
If $\mathrm{Cay}(G)$ is the Cayley metric space of a finitely generated group $G$, then 
${\mathbb C}_{\mathrm u}[\mathrm{Cay}(G)]$ is isomorphic to the algebraic crossed product 
$\ell_\infty(G)\rtimes G$. 
In general, if $X$ is a uniformly locally finite metric space and $\Gamma_X$ denotes 
the group of full translations in ${\mathbb C}_{\mathrm u}[X]$,
then ${\mathbb C}_{\mathrm u}[X]=\mathrm{span}(\ell_\infty(X)\Gamma_X)$. 
\end{lemma}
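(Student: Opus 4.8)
The plan is to treat the two assertions separately, deriving the general statement from a combinatorial decomposition into translations. For the first assertion I would realize both algebras concretely on $\ell^2(G)$ and exhibit an explicit isomorphism. Let $G$ act on $\ell_\infty(G)$ by left translation, $\alpha_g(f)(x)=f(g^{-1}x)$, and represent the algebraic crossed product $\ell_\infty(G)\rtimes G$ on $\ell^2(G)$ via the covariant pair consisting of multiplication operators and the left regular representation $g\mapsto\lambda_g$, where $(\lambda_g)_{x,y}=1$ exactly when $x=gy$. Under this representation a generator $f u_g$ becomes the matrix $f\lambda_g$, whose $(x,y)$-entry is $f(x)$ when $x=gy$ and $0$ otherwise. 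The decisive point is that the metric is \emph{right}-invariant, so $d(x,y)=|xy^{-1}|$; hence a matrix $a=[a_{x,y}]$ has propagation $\le R$ precisely when $a_{x,y}=0$ unless $xy^{-1}\in B(1_G,R)$, a finite set. Setting $f_g(x)=a_{x,g^{-1}x}$ for each $g\in B(1_G,R)$ gives $a=\sum_{g}f_g\lambda_g$, a finite sum, which shows the representation maps $\ell_\infty(G)\rtimes G$ onto ${\mathbb C}_{\mathrm u}[\mathrm{Cay}(G)]$. Injectivity is immediate because, for fixed $x$, distinct group elements $g$ contribute to distinct columns $g^{-1}x$; that the map respects product and involution is a routine check using $\lambda_g f\lambda_g^{-1}=\alpha_g(f)$.

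For the general assertion the inclusion $\mathrm{span}(\ell_\infty(X)\Gamma_X)\subseteq{\mathbb C}_{\mathrm u}[X]$ is clear, since $\ell_\infty(X)$ and $\Gamma_X$ both lie in ${\mathbb C}_{\mathrm u}[X]$. For the reverse inclusion I would reduce an arbitrary $a\in{\mathbb C}_{\mathrm u}[X]$ to translations in two steps. First, let $E=\{(x,y):a_{x,y}\ne 0\}$; since $\mathrm{prop}(a)\le R$ and $X$ is uniformly locally finite, the bipartite graph on two copies of $X$ with edge set $E$ has maximum degree at most $N:=\sup_x\sharp B(x,R)<\infty$, so by the (infinite, bounded-degree) K\"onig edge-colouring theorem its edges split into at most $N$ partial matchings $E_1,\dots,E_N$. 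Each $E_i$ is the graph of a partial bijection, giving a partial translation $s_i$, and $a=\sum_{i=1}^N D_i s_i$ with $D_i\in\ell_\infty(X)$ recording the values of $a$ along $E_i$. Thus it suffices to write every partial translation as a finite sum of terms $\chi_A t$ with $t\in\Gamma_X$.

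This extension problem is the second and harder step, and it is the main obstacle: a single partial translation need not be the restriction of one full translation (for instance the forward shift on $\{0,1,2,\dots\}$ cannot be completed to a bounded-displacement bijection). I would view a partial bijection $\phi$ with $d(\phi(y),y)\le R_0$ as a directed graph on $X$ with an arrow $y\to\phi(y)$; since in- and out-degrees are at most one, this graph is a disjoint union of cycles, bi-infinite paths, one-sided infinite paths, and finite paths. On the cycles and bi-infinite paths $\phi$ is already bijective with displacement $\le R_0$, so these assemble into a single full translation $t_0$ (the identity elsewhere). On the remaining paths I would $2$-edge-colour the edges so that each colour class is a matching; each such matching extends to a bounded-displacement involution of $X$ by swapping matched pairs and fixing everything else, yielding full translations $t_1,t_2\in\Gamma_X$. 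Cutting down by the characteristic functions of the appropriate parts of $\mathop{\mathrm{dom}}(\phi)$ then expresses $\phi$ as $\chi_{D_0}t_0+\chi_{D_1}t_1+\chi_{D_2}t_2$.

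Feeding this back into $a=\sum_i D_i s_i$ and absorbing the characteristic functions into the diagonal factors gives $a\in\mathrm{span}(\ell_\infty(X)\Gamma_X)$, which completes the proof. The only delicacies are the invocation of K\"onig's theorem in the infinite bounded-degree setting (a compactness argument, in the same spirit as the matching theorem used for Lemma~\ref{lemma:coarseequivstr}) and the bookkeeping that keeps all displacements, hence all propagations, uniformly bounded throughout both steps.
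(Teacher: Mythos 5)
Your proposal is correct, and it takes a genuinely different (and in one place more careful) route than the paper. The paper does not prove the first assertion at all and only sketches the second; your explicit verification of the Cayley case via right-invariance of the word metric and the identification $a=\sum_{g\in B(1_G,R)}f_g\lambda_g$ is the standard argument and is fine. For the second assertion, the paper takes for granted the reduction to partial translations (which you instead derive from the bounded-degree K\"onig edge-colouring theorem, in the same compactness spirit as the matching theorem behind Lemma~\ref{lemma:coarseequivstr}), and then decomposes a partial translation $t\colon A\to B$ by partitioning its \emph{domain}: $A=A_0\sqcup A_1\sqcup A_2$ with $t|_{A_0}=\mathrm{id}$ and $t(A_i)\cap A_i=\emptyset$ for $i=1,2$, taking $A_1$ maximal, so that each piece together with its inverse extends to a swap involution in $\Gamma_X$. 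You instead partition the \emph{edges} of the orbit graph: cycles and bi-infinite orbits, on which $t$ is already a bijection, are absorbed into a single full translation, and the remaining path edges are split into two matchings, each extending to a bounded-displacement involution. Both arguments hinge on the same swap-involution idea, but your peeling-off of the recurrent part is not merely cosmetic --- it is exactly what is needed where the paper's sketch is too quick: if $t$ contains an odd cycle (say $x\mapsto y\mapsto z\mapsto x$), no partition of $\{x,y,z\}$ into \emph{two} sets each moved off itself exists, so the paper's two-piece maximal partition fails there; one must either allow three moving pieces or remove cycles first, as you do. In short, your route buys a self-contained proof of the reduction step and a decomposition that handles odd cycles correctly, at the cost of invoking an infinite edge-colouring theorem that the paper's domain-partition trick (suitably repaired with three pieces) can avoid.
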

\begin{proof}
We sketch the proof of the second assertion for the reader's convenience. 
It suffices to show that every partial translation $t$ belongs to the latter set. 
For the bijection $t\colon A\to B$ as above, there is a 
partition $A=\bigsqcup_{i=0}^2 A_i$ such that $t|_{A_0}=\mathrm{id}_{A_0}$ 
and $t(A_i)\cap A_i=\emptyset$ for $i=1,2$ (take a maximal $A_1$ as such). 
Then, $(t|_{A_i})\sqcup(t^{-1}|_{t(A_i)})$ extends 
to a full translation by setting it identity off $A_i\sqcup t(A_i)$.
\end{proof}

The maximal C$^*$-norm on $\mathbb{C}_\mathrm{u} [X]$ is defined as follows:
\[\| a \|_\mathrm{max} = \sup \{ \| \pi(a) \| \mid \pi \colon \mathbb{C}_\mathrm{u} [X] \to \mathbb{B}(\mathcal{H}) *\textrm{-representation on a Hilbert space}\}. \]
Denote by $\mathrm{C}^*_\mathrm{u, max} [X]$ the completion of $\mathbb{C}_\mathrm{u} [X]$ 
with respect to $\| \cdot \|_\mathrm{max}$, and call it the \textit{maximal uniform Roe algebra}.
Note that when $X$ has property $\mathrm{A}$, the norm $\| \cdot \|_\mathrm{max}$ coincides with 
the norm $\| \cdot \|_{{\mathbb B}(\ell_2(X))}$ as an operator on $\ell_2(X)$ 
(see Proposition 1.3 in \cite{SpakulaWillettCrelle}).

For a group $G$, we denote by $\mathrm{C}^*_\mathrm{max} [G]$ 
the completion of the group algebra $\mathbb{C} [G]$ with respect to 
the maximal C$^*$-norm $\| \cdot \|_\mathrm{max}$. 
This $\mathrm{C}^*$-algebra is called the \textit{maximal} (a.k.a.\ \textit{full}) 
\textit{group $\mathrm{C}^*$-algebra}.
We have an natural embedding of the group algebra
$\mathbb{C} [G]$ into $\mathbb{C}_\mathrm{u} [\mathrm{Cay}(G)]$, given by $\xi \mapsto [\xi(g h^{-1})]_{g, h}$,
but the corresponding homomorphism $\mathrm{C}^*_\mathrm{max} [G]\to\mathrm{C}^*_\mathrm{u, max} [X]$ is not 
faithful unless $G$ is amenable. 

\subsection{\bf Sum of squares in $*$-algebras}
For a $*$-algebra, a notion of positivity is defined as follows.

\begin{definition}
For a $*$-algebra $\mathcal{A}$, the cone of \emph{sums of squares} 
is defined to be
\[
\Sigma^2\mathcal{A} = \{ \sum_{i=1}^n \xi_i^* \xi_i : n\in{\mathbb N},\,\xi_1,\ldots,\xi_n\in \mathcal{A}\}.
\]
In case $\mathcal{A} = \mathbb{C}_\mathrm{u} [\mathrm{Cay}(G)]$ or $\mathcal{A} = \mathbb{C} [G]$, 
we say an element $x\in \mathcal{A}$ is a \emph{sum of squares of $n$ elements with propagation at most} $R$ 
if there are $n$ elements $\{\xi_i \}_{i = 1}^{n}$ in $\mathcal{A}$ such that 
$\mathrm{prop}(\xi_i)\le R$ and $x=\sum_{i =1}^n \xi_i^* \xi_i$.
We denote by $\Sigma^2_{n,R}\mathcal{A}$ the set of sums of squares of $n$ elements with propagation at most $R$.
\end{definition}

\begin{lemma}\label{lemma: square sum}
Let $G$ be an amenable $k$-marked group.
Let $x$ be an element in the group algebra $\mathbb{C} [G]$.
Suppose that $x = \sum_{i =1}^n \xi_i^* \xi_i$  is a sum of squares 
of $n$ elements $\{\xi_i\}_{i = 1}^{n}$ in $\mathbb{C}_{\mathrm{u}} [\mathrm{Cay}(G)]$ with propagation at most $R$.
Then $x$ is also a sum of squares as an element of 
$\mathbb{C} [G]$.
More precisely, 
$x$ is a sum of squares of $(n \times \sharp(B(1_G, R)))$ elements 
in $\mathbb{C} [G]$ with propagation at most $R$.
\end{lemma}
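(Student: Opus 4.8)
We have an amenable $k$-marked group $G$, an element $x \in \mathbb{C}[G]$, and a decomposition $x = \sum_{i=1}^n \xi_i^* \xi_i$ where $\xi_i \in \mathbb{C}_{\mathrm u}[\mathrm{Cay}(G)]$ with propagation $\le R$. We want to show $x$ is a sum of squares in $\mathbb{C}[G]$ (with the stated bound on the number of terms).

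**Key fact to use.** By Lemma~\ref{lemma:algroestr}, $\mathbb{C}_{\mathrm u}[\mathrm{Cay}(G)] \cong \ell_\infty(G) \rtimes G$. So an element of propagation $\le R$ in the uniform Roe algebra can be written as a finite sum $\sum_{g} f_g \cdot g$ where $f_g \in \ell_\infty(G)$ and $g$ ranges over $B(1_G, R)$ (only group elements $g$ with $d(1,g) \le R$ contribute, since propagation $\le R$). The group algebra $\mathbb{C}[G]$ sits inside as the subalgebra where the coefficient functions $f_g$ are *constant*.

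**The structure of the argument.** Since $G$ is amenable, I want to use an averaging/Følner argument to convert "bounded measurable coefficients" into "constant coefficients." Each $\xi_i = \sum_{g \in B(1_G,R)} f_{i,g} \cdot g$ with $f_{i,g} \in \ell_\infty(G)$. Then $\xi_i^* \xi_i$ expands, and the hypothesis that $x \in \mathbb{C}[G]$ means the total sum $\sum_i \xi_i^* \xi_i$ has constant coefficients. The idea is to decompose each $\xi_i$ according to where its coefficient functions live, cutting $\ell_\infty(G)$ by indicator functions of a Følner-type partition. Specifically, I expect to write each $\xi_i$ as built from pieces $p \cdot \xi_i$ where $p$ is a projection in $\ell_\infty(G)$ (an indicator function), and to recognize that for each group element $g \in B(1_G,R)$ there are $\sharp(B(1_G,R))$ "slots," yielding the claimed count $n \times \sharp(B(1_G,R))$.

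**The heart of the matter.** I suspect the cleanest path does not actually invoke amenability as a limiting/Følner argument but rather uses it more cheaply: the point is that in $\ell_\infty(G) \rtimes G$, conjugation of $\ell_\infty(G)$ by a full translation $g$ is the shift action, and a diagonal element $f \in \ell_\infty(G)$ that happens to be $G$-invariant is constant. The concrete plan: expand $\sum_i \xi_i^*\xi_i$ in the $\ell_\infty(G)\rtimes G$ picture; each square $\xi_i^*\xi_i = \sum_{g,h} g^{-1} \bar f_{i,g} f_{i,h} h$, and collecting by the group element $t = g^{-1}h$ gives coefficient functions indexed by $t \in B(1_G,2R)$. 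The hypothesis forces these to be constant. The trick is then to regroup the double sum: for each fixed $g \in B(1_G,R)$, the element $\sum_h (g^{-1}\bar f_{i,g}) f_{i,h} h$ can be massaged — I would translate each $\xi_i$ by the $\sharp(B(1_G,R))$ elements of $B(1_G,R)$ and use the group-invariance to symmetrize. Writing $\eta_{i,g} := g \cdot \xi_i$ (left translate) for $g \in B(1_G,R)$ gives $n \cdot \sharp(B(1_G,R))$ new elements, each of propagation $\le R$, and I expect $\sum_{i,g} \eta_{i,g}^* \eta_{i,g}$ to reproduce a $G$-averaged version of $x$ that, being $G$-invariant and equal to $x$ on the relevant support, lands in $\Sigma^2\mathbb{C}[G]$.

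**Where the difficulty lies.** The main obstacle is the bookkeeping that pins down *exactly* $n \times \sharp(B(1_G,R))$ squares while keeping each summand genuinely inside $\mathbb{C}[G]$ with propagation $\le R$ — i.e., ensuring the averaging I perform produces *constant* diagonal coefficients rather than merely bounded ones. Amenability should enter precisely to guarantee that an $\ell_\infty(G)$-valued coefficient which is forced (by the constraint $x \in \mathbb{C}[G]$) to have constant "trace" along each $G$-orbit can be replaced by its constant average without increasing the count beyond the stated bound. I would verify this by checking the identity $\sum_{i,g}\eta_{i,g}^*\eta_{i,g} = \sharp(B(1_G,R)) \cdot x$ (or a constant multiple absorbed into the $\xi_i$'s) at the level of matrix entries, reducing everything to the fact that a positive element of $\mathbb{C}[G]$ recovered from its orbit averages is unchanged. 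The verification that each $\eta_{i,g}$ has propagation $\le R$ is immediate since left translation by $g$ is a full translation and propagation is translation-invariant.
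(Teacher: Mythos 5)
Your high-level instinct (average the $\ell_\infty(G)$-coefficients into constants, with the count $n \times \sharp(B(1_G,R))$ coming from the ball) points in the right direction, but the concrete mechanism you propose does not work, and the step you explicitly decided to avoid is exactly the one that is needed. Your elements $\eta_{i,g}:=g\cdot\xi_i$ achieve nothing: each group element $g$ is a unitary (a full translation) in $\mathbb{C}_{\mathrm{u}}[\mathrm{Cay}(G)]$, so $\eta_{i,g}^*\eta_{i,g}=\xi_i^*g^*g\xi_i=\xi_i^*\xi_i$, and your identity $\sum_{i,g}\eta_{i,g}^*\eta_{i,g}=\sharp(B(1_G,R))\,x$ is just $\sharp(B(1_G,R))$ verbatim copies of the original decomposition. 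No summand has been moved into $\mathbb{C}[G]$: the coefficients of $\eta_{i,g}$ are the shifts $l_g(f_{i,h})$ of the old ones, exactly as non-constant as before. (Also, left translation does not preserve propagation: $\mathrm{prop}(g\xi_i)$ can be as large as $R+d(g,1_G)$.) The underlying obstruction is structural: no \emph{finite} averaging over group translates can convert bounded coefficients into constant ones, so ``using amenability more cheaply'' than via a mean, as you suggest, cannot succeed.

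The paper's proof of Lemma~\ref{lemma: square sum} supplies the two ideas your proposal is missing. First, amenability is used head-on: fix a left-invariant mean $\Psi$ on $\ell_\infty(G)$ and extend it coefficient-wise to $\tilde{\Psi}\colon\mathbb{C}_{\mathrm{u}}[\mathrm{Cay}(G)]\to\mathbb{C}[G]$, $\tilde{\Psi}(\sum_g\xi_g g)=\sum_g\Psi(\xi_g)g$. Since $\tilde{\Psi}$ fixes $\mathbb{C}[G]$ pointwise and $x\in\mathbb{C}[G]$, one gets $x=\tilde{\Psi}(x)=\sum_i\tilde{\Psi}(\xi_i^*\xi_i)$; this is where the hypothesis $x\in\mathbb{C}[G]$ is actually consumed. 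Second --- and this is the point your plan never confronts --- $\tilde{\Psi}$ is not multiplicative, so $\tilde{\Psi}(\xi^*\xi)$ is \emph{not} the square of $\tilde{\Psi}(\xi)$. Instead, left-invariance of $\Psi$ gives $\tilde{\Psi}(\xi^*\xi)=\sum_{g,h}\Psi(\overline{\xi_h}\xi_g)\,h^{-1}g$; the matrix $\left[\Psi(\overline{\xi_h}\xi_g)\right]_{g,h\in B(1_G,R)}$ is positive semi-definite because $\Psi$ is a positive functional, and factoring it through its square root produces vectors $\alpha_g\in\mathbb{C}^N$, $N=\sharp(B(1_G,R))$, with $\tilde{\Psi}(\xi^*\xi)=\sum_{i=1}^N\bigl(\sum_h\alpha_h^{(i)}h\bigr)^*\bigl(\sum_g\alpha_g^{(i)}g\bigr)$. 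This Gram-factorization step is what turns each averaged square into $N$ genuine squares in $\mathbb{C}[G]$ of propagation at most $R$, and it is the true source of the count $n\times\sharp(B(1_G,R))$ that you were trying to obtain by translation bookkeeping.
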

\begin{proof}
Fix a mean $\Psi$ on $\ell_\infty(G)$ which is invariant under the left translation 
action $\left\{l_g \colon \ell_\infty(G) \to \ell_\infty(G) \right\}_{g \in G}$.
Note that every element $x$ in $\mathbb{C}_{\mathrm{u}} [\mathrm{Cay}(G)]$ is uniquely 
written as $x=\sum_g \xi_g g$, where $\xi_g \in \ell_\infty(G) \subset \mathbb{C}_\mathrm{u} [\mathrm{Cay}(G)]$ 
(all but finitely many are zero) and $g \in G \subset \mathbb{C}[G]$. 
We extend the mean $\Psi$ to 
$\tilde{\Psi}\colon \mathbb{C}_\mathrm{u} [\mathrm{Cay}(G)] \to \mathbb{C}[G]$
by $\tilde{\Psi}(\sum_g \xi_g g) = \sum_g \Psi(\xi_g) g$.
Since $x = \sum_{i =1}^n \xi_i^* \xi_i \in \mathbb{C}[G]\cap\Sigma^2\mathbb{C}_\mathrm{u} [\mathrm{Cay}(G)]$ 
implies $x = \sum_{i =1}^n\tilde{\Psi}(\xi_i^* \xi_i)$, 
it suffices to show $\tilde{\Psi}(\xi^*\xi) \in \Sigma^2_{N,R} \mathbb{C}[G]$ 
for every $\xi\in\mathbb{C}_\mathrm{u} [\mathrm{Cay}(G)]$ such that $\mathrm{prop}(\xi)\le R$.
Here $N=\sharp(B(1_G, R))$. 

Let $\xi=\sum_{g \in B(1_G, R)} \xi_g g$ be given. Then, 
\[
\tilde{\Psi}(\xi^*\xi) 
 = \sum_{g, h} \tilde{\Psi}\left( l_{h^{-1}} \left(\overline{\xi_h}{\xi_g}\right) h^{-1}g \right)
 = \sum_{g, h} \Psi( \overline{\xi_h}{\xi_g} ) h^{-1}g.
\]
Since $\Psi \colon \ell_\infty(G) \to \mathbb{C}$
is a positive linear functional, 
the matrix $[\Psi \left(\overline{\xi_h}{\xi_{g}} \right)]_{g,h \in B(1_G, R)}$ is positive semi-definite. 
Considering the square root of the matrix,
we obtain vectors $\{\alpha_g\}_{g \in B(1_G, R)} \subset \mathbb{C}^N$ satisfying that
$\Psi \left(\overline{\xi_h}{\xi_{g}} \right) = \langle \alpha_g, \alpha_h \rangle$.
Write $\alpha_g=(\alpha_g^{(i)})_{i=1}^N \in \mathbb{C}^N$. 
Then, one sees
\[
\tilde{\Psi}(\xi^*\xi) 
 = \sum_{g, h} \langle \alpha_g, \alpha_h \rangle h^{-1} g
 =  \sum_{i = 1}^N \left( \sum_h \alpha_h^{(i)}  h\right)^* \left( \sum_g \alpha_g^{(i)}  g\right)
\]
and so $\tilde{\Psi}(\xi^*\xi)\in\Sigma^2_{N,R}\mathbb{C}[G]$. 
This completes the proof. 
(In effect, we have shown that $\tilde{\Psi}$ is completely positive in the sense of Section 12 in \cite{OzawaCEC}.)
\end{proof}

\subsection{Positive elements in maximal algebras}

As a simple application of theory of semi-pre-$\mathrm{C}^*$-alge\-bras, 
we obtain the following.
For details, see Proposition $15$ in \cite{Schmudgen} or Theorem $1$ in \cite{OzawaCEC}.

\begin{proposition}\label{proposition: positive elements in algebra}
Let ${\mathcal A}$ be either the group algebra $\mathbb{C} [G]$ of a group $G$ or 
the algebraic uniform Roe algebra $\mathbb{C}_\mathrm{u}[X]$ of 
a uniformly locally finite metric space $X$.
Let $a\in{\mathcal A}$ be a self-adjoint element.
Then the element $a$ is positive in respectively 
$\mathrm{C}^*_\mathrm{max} [G]$ or $\mathrm{C}^*_\mathrm{u, max}[X]$  
if and only if $a + \epsilon 1 \in \Sigma^2{\mathcal A}$ for every $\epsilon>0$.
\end{proposition}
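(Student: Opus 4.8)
The statement is a standard consequence of the theory of semi-pre-$\mathrm{C}^*$-algebras (also called Archimedean closedness of the positive cone), so the plan is to verify the hypotheses of that general theory rather than to reprove it. Recall the abstract fact: if $\mathcal{A}$ is a unital $*$-algebra equipped with a $\mathrm{C}^*$-seminorm that dominates every $*$-representation (here the maximal seminorm), and if the algebra admits enough bounded $*$-representations to separate points of its image in the enveloping $\mathrm{C}^*$-algebra, then a self-adjoint $a$ maps to a positive element of the completion precisely when $a+\epsilon 1\in\overline{\Sigma^2\mathcal{A}}$ for all $\epsilon>0$, and moreover the closure can be removed in the presence of an Archimedean order unit. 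The content to check is therefore (a) that $1$ is an Archimedean order unit for $\mathcal{A}$ with respect to the maximal seminorm, and (b) that the maximal seminorm is finite, i.e.\ that every element is bounded in every $*$-representation.

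First I would handle the boundedness/well-definedness of $\|\cdot\|_{\mathrm{max}}$. For $\mathcal{A}=\mathbb{C}[G]$ this is classical: each group element is sent to a unitary, so $\|g\|\le 1$ in any $*$-representation and $\|\cdot\|_{\mathrm{max}}$ is finite. For $\mathcal{A}=\mathbb{C}_\mathrm{u}[X]$ the same must be extracted from the structure of the algebra. Using Lemma~\ref{lemma:algroestr}, one writes $\mathbb{C}_\mathrm{u}[X]=\mathrm{span}(\ell_\infty(X)\Gamma_X)$, where every full translation is a unitary and every element of $\ell_\infty(X)$ is a normal element of norm at most its sup-norm in any $*$-representation (being a bounded self-adjoint-generated commutative $*$-subalgebra). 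Hence any finite sum $a=\sum_g \xi_g g$ with $\xi_g\in\ell_\infty(X)$ satisfies $\|\pi(a)\|\le\sum_g\|\xi_g\|_\infty$ uniformly over $\pi$, giving finiteness.

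Next I would verify the Archimedean property. The key point is that in $\mathrm{C}^*_\mathrm{max}[G]$ (resp.\ $\mathrm{C}^*_\mathrm{u,max}[X]$) an element of the dense $*$-subalgebra $\mathcal{A}$ is positive iff it is a norm-limit of elements of $\Sigma^2\mathcal{A}$; the task is to upgrade the limit to the genuine containment $a+\epsilon 1\in\Sigma^2\mathcal{A}$. This follows once $1$ is an order unit: if $a\ge 0$ in the completion then $a+\epsilon 1\ge \epsilon 1>0$ in the completion, so $a+\epsilon 1$ is invertible with a self-adjoint square root $b=(a+\epsilon 1)^{1/2}$ in the $\mathrm{C}^*$-algebra, and approximating $b$ by an element $\xi\in\mathcal{A}$ in norm gives $\xi^*\xi$ close to $a+\epsilon 1$; absorbing the error into the slack $\epsilon 1$ (replacing $\epsilon$ by $\epsilon/2$ and choosing the approximation fine enough) yields $a+\epsilon 1\in\Sigma^2\mathcal{A}$. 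The converse is immediate: if $a+\epsilon 1\in\Sigma^2\mathcal{A}$ for all $\epsilon>0$, then the image is a limit of positives, hence positive, and self-adjointness plus $\|a+\epsilon 1 - a\|_{\mathrm{max}}\to 0$ forces $a\ge 0$.

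The step I expect to be the main obstacle is the invertibility/square-root argument in the non-commutative algebraic setting: one must ensure that the functional-calculus square root of $a+\epsilon 1$, which a priori lives only in the $\mathrm{C}^*$-completion, can be approximated by an honest element of $\mathcal{A}$ \emph{with controlled propagation} if one wants the quantitative refinement, and that the algebraic identity $a+\epsilon 1=\sum_i\xi_i^*\xi_i$ genuinely holds in $\mathcal{A}$ rather than merely in the closure. The cleanest route is to invoke the cited general results (Proposition~15 of \cite{Schmudgen}, Theorem~1 of \cite{OzawaCEC}) once (a) and the Archimedean order-unit property are in hand, so that the abstract machinery supplies the exact containment; I would therefore present the proof as a reduction to those theorems, with the verification of the order-unit structure of $1$ in both $\mathbb{C}[G]$ and $\mathbb{C}_\mathrm{u}[X]$ as the only nontrivial input.
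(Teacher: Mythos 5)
Your overall strategy---verify that every element of $\mathcal{A}$ is bounded in every $*$-representation and then reduce to the general theory of semi-pre-$\mathrm{C}^*$-algebras---is exactly the paper's approach: the paper gives no argument beyond citing Proposition 15 of \cite{Schmudgen} and Theorem 1 of \cite{OzawaCEC}, and your boundedness check (unitarity of group elements, respectively of full translations via Lemma~\ref{lemma:algroestr}, together with the sup-norm bound on $\ell_\infty(X)$) is indeed the hypothesis one must supply.

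However, the self-contained justification you sketch for the key implication is circular, and you should not present it as the proof. You take $b=(a+\tfrac{\epsilon}{2}1)^{1/2}$ in the $\mathrm{C}^*$-completion, approximate it in norm by $\xi\in\mathcal{A}$, and claim the error can be ``absorbed into the slack $\epsilon 1$.'' But the error $r:=a+\epsilon 1-\xi^*\xi$ is merely a self-adjoint element of $\mathcal{A}$ that is positive (even bounded below) in the completion; concluding from this that $r\in\Sigma^2\mathcal{A}$ is precisely the proposition being proved. Iterating the approximation only produces a norm-convergent series of squares, i.e.\ membership in the \emph{closure} $\overline{\Sigma^2\mathcal{A}}$, and the entire content of the statement is that the closure can be removed. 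The actual mechanism behind the cited results is a separation argument, not an approximation argument: boundedness of all elements makes $1$ an order unit (an algebraic interior point) of the cone $\Sigma^2\mathcal{A}$ inside the self-adjoint part of $\mathcal{A}$, so if $a+\epsilon 1\notin\Sigma^2\mathcal{A}$ for some $\epsilon>0$, the Eidelheit--Kakutani (Hahn--Banach) separation theorem yields a linear functional $\varphi$ on $\mathcal{A}_{\mathrm{sa}}$ with $\varphi(1)=1$, $\varphi\geq 0$ on $\Sigma^2\mathcal{A}$, and $\varphi(a+\epsilon 1)\leq 0$; applying the GNS construction to $\varphi$ (boundedness again guarantees the GNS representation acts by bounded operators) produces a $*$-representation in which $a+\epsilon 1$ fails to be positive, contradicting positivity of $a$ in the maximal completion. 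So either state the proposition, as the paper does, as a direct application of the cited theorems once boundedness is verified, or replace your square-root paragraph by this separation-plus-GNS argument; the converse direction of your proposal is fine as written.
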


\subsection{\bf On property (T)}

For a $k$-marked group $(G, s_1, \ldots, s_k)$,
let $\Delta$ denote the (nonnormalized) Laplacian
\[2k - \sum_{j = 1}^{k} \left( s_j + s_j^{-1} \right) \in \mathbb{C} [G].\]
Property (T) of Kazhdan can be formulated as follows.

\begin{proposition}[See Lemma 12.1.8 in \cite{OzawaBook}]\label{proposition: (T)}
The following are equivalent:
\begin{itemize}
\item
The group $G$ has property $\mathrm{(T)}$,
\item
The spectrum of $\Delta$ in the C$^*$-algebra $\mathrm{C}^*_\mathrm{max} [G]$ has gap. More precisely,
there exists $\nu > 0$ such that 
the spectrum of $\Delta$ is included in $\{0\} \cup [\nu, \infty)$.
\end{itemize}
\end{proposition}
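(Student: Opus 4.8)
The plan is to reduce everything to the single algebraic identity $\Delta=\sum_{j=1}^k(1-s_j)^*(1-s_j)$ in $\mathbb{C}[G]$ (here $(1-s_j)^*(1-s_j)=(1-s_j^{-1})(1-s_j)=2-s_j-s_j^{-1}$), which already exhibits $\Delta$ as an element of $\Sigma^2\mathbb{C}[G]$ and hence as a positive element of $\mathrm{C}^*_\mathrm{max}[G]$. First I would unwind both conditions through the universal property of $\mathrm{C}^*_\mathrm{max}[G]$: its $*$-representations are exactly the unitary representations $\pi$ of $G$, and since the universal representation is faithful, the spectrum of $\Delta$ in $\mathrm{C}^*_\mathrm{max}[G]$ is the closure of $\bigcup_\pi \mathrm{spec}(\pi(\Delta))$. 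For each such $\pi$ on a space $\mathcal{H}$ the identity gives $\langle\pi(\Delta)\xi,\xi\rangle=\sum_{j=1}^k\|\xi-\pi(s_j)\xi\|^2$ for every $\xi\in\mathcal{H}$; thus $\pi(\Delta)\ge 0$ and, because $s_1,\ldots,s_k$ generate $G$, its kernel is precisely the subspace $\mathcal{H}^G$ of $G$-invariant vectors. This identity is the hinge of both implications.

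For the implication $(\mathrm{T})\Rightarrow\text{gap}$, I would use that property $(\mathrm{T})$ supplies a Kazhdan constant $\kappa>0$ for the generating set $\{s_1,\ldots,s_k\}$: every $\pi$ with no nonzero invariant vector and every unit vector $\xi$ satisfy $\max_j\|\xi-\pi(s_j)\xi\|\ge\kappa$, whence $\langle\pi(\Delta)\xi,\xi\rangle\ge\kappa^2$. Decomposing an arbitrary $\pi$ into the invariant part and its orthogonal complement $\mathcal{H}=\mathcal{H}^G\oplus(\mathcal{H}^G)^\perp$ (both $\pi$-invariant) and applying this bound on the complement, where no nonzero invariant vector survives, gives $\mathrm{spec}(\pi(\Delta))\subset\{0\}\cup[\kappa^2,\infty)$ for every $\pi$. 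Taking the union over $\pi$ and passing to the closure yields the asserted gap with $\nu=\kappa^2$.

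For the converse, I would argue that a gap $\{0\}\cup[\nu,\infty)$ forces property $(\mathrm{T})$. Let $\pi$ be any representation with $\mathcal{H}^G=0$. Since $\pi$ factors through $\mathrm{C}^*_\mathrm{max}[G]$ we have $\mathrm{spec}(\pi(\Delta))\subset\mathrm{spec}_{\mathrm{C}^*_\mathrm{max}[G]}(\Delta)\subset\{0\}\cup[\nu,\infty)$, so $0$ is an isolated point of $\mathrm{spec}(\pi(\Delta))$; its spectral projection is therefore the orthogonal projection onto $\ker\pi(\Delta)=\mathcal{H}^G=0$, so in fact $0\notin\mathrm{spec}(\pi(\Delta))$ and $\pi(\Delta)\ge\nu$. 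The basic identity then gives $\sum_{j=1}^k\|\xi-\pi(s_j)\xi\|^2\ge\nu$, hence $\max_j\|\xi-\pi(s_j)\xi\|\ge\sqrt{\nu/k}$ for every unit $\xi$; as $\pi$ ranges over all representations without invariant vectors, this is a Kazhdan inequality for $\{s_1,\ldots,s_k\}$ and establishes property $(\mathrm{T})$.

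The main obstacle I anticipate is the careful bookkeeping of the passage between the abstract spectrum of $\Delta$ in the universal $\mathrm{C}^*$-algebra and the operator spectra $\mathrm{spec}(\pi(\Delta))$ in individual representations, and in particular the isolated-point argument: one must invoke that for a self-adjoint operator an isolated point of the spectrum carries a nonzero spectral projection coinciding with the corresponding eigenspace projection, so that the absence of invariant vectors genuinely pushes $0$ out of the spectrum rather than leaving it as an approximate eigenvalue. Everything else is soft, resting on the identity $\Delta=\sum_{j=1}^k(1-s_j)^*(1-s_j)$ together with the universal property of $\mathrm{C}^*_\mathrm{max}[G]$.
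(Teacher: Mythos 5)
Your proof is correct. Note that the paper itself does not prove this proposition: it is quoted from Lemma 12.1.8 in \cite{OzawaBook} and is explicitly treated in the introduction as the \emph{definition} of property $(\mathrm{T})$ for discrete groups, so there is no in-paper argument to compare against. Your argument --- the identity $\Delta=\sum_{j=1}^k(1-s_j)^*(1-s_j)$, the resulting identification of $\ker\pi(\Delta)$ with the invariant vectors, the reduction of the universal spectrum to $\overline{\bigcup_\pi\mathrm{spec}(\pi(\Delta))}$, and the isolated-point spectral-projection step for the converse --- is the standard proof of this equivalence, essentially the one given in the cited reference.
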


\subsection{Geometric property (T)}
Let $\{ X^{(m)} \}_m$ be a sequence of finite connected graphs whose degree is uniformly bounded.
A notion called \textit{geometric property $\mathrm{(T)}$} is defined 
for the disjoint union $X = \bigsqcup_{m =1}^\infty X^{(m)}$, or more generally for (weakly) 
monogenic coarse spaces having bounded geometry in \cite{WillettYu13}.
If $X$ has geometric property $\mathrm{(T)}$ and the cardinality $\sharp\left( X^{(m)} \right)$ 
of components tends to $\infty$, then it is a sequence of expander graphs. 
The converse need not hold, see the introduction.
Geometric property $\mathrm{(T)}$ is originally introduced by Willett--Yu in their study of 
the maximal coarse Baum--Connes conjecture \cite{WillettYuII}.

In this paper, we take the second condition in the following proposition as a definition 
of geometric property $\mathrm{(T)}$.
Let $\Delta_m$ be the (nonnormalized) discrete Laplacian on $\ell_2(X^{(m)})$:
\begin{align*}
(\Delta_m)_{x, y}:=\left\{ \begin{array}{ll}
-1 & {\rm if\ } d(x, y)=1 \\
\mathrm{deg}(x) & {\rm if\ } x=y\\
0 & {\rm\ otherwise}\\
\end{array}.\right.
\end{align*}
The sequence $\Delta = (\Delta_m)_{m \in \mathbb{N}}$ is an element of $\mathbb{C}_\mathrm{u} [X]$ whose propagation is $1$.
It is called \emph{Laplacian}. The Laplacian $\Delta \in \mathbb{C}_\mathrm{u} [X]$ is also defined 
for a general uniformly locally finite metric space $X$. See Section 5 in \cite{WillettYu13}.

\begin{proposition}[See Proposition 5.7 in \cite{WillettYu13}]\label{lemma: spectral gap}
The following are equivalent:
\begin{enumerate}
\item
The space $X$ has geometric property $\mathrm{(T)}$.
\item
The Laplacian $\Delta$ in the maximal uniform Roe algebra $\mathrm{C}^*_\mathrm{u, max} [X]$ has spectral gap.
More precisely, there exists a positive number $\nu$
such that the spectrum of $\Delta$ in $\mathrm{C}^*_\mathrm{u, max} [X]$
is included in $\{0\} \cup [\nu, \infty)$.
\end{enumerate}
\end{proposition}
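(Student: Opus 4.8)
The plan is to match the original Willett--Yu formulation of geometric property $\mathrm{(T)}$ with the spectral gap condition by exploiting the defining feature of the maximal norm, namely that $\| a \|_\mathrm{max} = \sup_\pi \| \pi(a) \|$ records \emph{all} $*$-representations $\pi \colon \mathbb{C}_\mathrm{u}[X] \to \mathbb{B}(\mathcal{H})$ simultaneously. In the Willett--Yu setup, $X$ has geometric property $\mathrm{(T)}$ precisely when there is a single $\nu > 0$ such that for every such $\pi$ the operator $\pi(\Delta)$ has a uniform gap above its kernel, i.e.\ $\pi(\Delta) \ge \nu\left(1 - \pi(p_0)\right)$ with $p_0$ the spectral projection onto the space $\ker\pi(\Delta)$ of $\pi$-invariant vectors; the essential strength of the notion over being an expander is exactly that one constant works across all representations, not merely the defining one on $\ell_2(X)$. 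The engine of the proof is therefore the observation that, for self-adjoint $a \in \mathbb{C}_\mathrm{u}[X]$, the spectrum of $a$ in $\mathrm{C}^*_\mathrm{u, max}[X]$ equals the closure of $\bigcup_\pi \mathrm{spec}(\pi(a))$.

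Since $\Delta \ge 0$ and $0$ already lies in $\mathrm{spec}(\Delta|_{\ell_2(X)})$ (functions constant on each component $X^{(m)}$ are annihilated), the spectrum in $\mathrm{C}^*_\mathrm{u, max}[X]$ has the shape $\{0\} \cup S$ with $S \subseteq (0, \infty)$, and the asserted gap is exactly $S \cap (0, \nu) = \emptyset$. For $(\ref{cond:maincoht})\Leftarrow$ the gap, I would use that a gap isolates $0$, so holomorphic functional calculus (a contour integral encircling only $0$) yields a genuine projection $p_0 = \chi_{\{0\}}(\Delta) \in \mathrm{C}^*_\mathrm{u, max}[X]$ with the two relations $\Delta p_0 = 0$ and $\Delta \ge \nu(1 - p_0)$ holding \emph{inside the C$^*$-algebra}. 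Applying any $\pi$ preserves these relations, and because $\chi_{\{0\}}$ is continuous on $\mathrm{spec}(\Delta)$ it intertwines with $\pi$; thus $\pi(p_0)$ is the kernel projection of $\pi(\Delta)$ and the gap $\nu$ is uniform, which is geometric property $\mathrm{(T)}$. The reverse implication runs backwards: the uniform estimate $\pi(\Delta) \ge \nu(1 - \pi(p_0))$ forbids any $\mathrm{spec}(\pi(\Delta))$ from meeting $(0, \nu)$, so taking the supremum over $\pi$ the spectrum of $\Delta$ in $\mathrm{C}^*_\mathrm{u, max}[X]$ avoids $(0,\nu)$.

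It is worth recording the purely positivity-theoretic reformulation, which dovetails with the rest of the paper: the gap $\mathrm{spec}(\Delta) \subseteq \{0\}\cup[\nu,\infty)$ is equivalent to the single operator inequality $\Delta^2 - \nu\Delta \ge 0$ in $\mathrm{C}^*_\mathrm{u, max}[X]$, since $t(t-\nu) \ge 0$ together with $t \ge 0$ forces $t \in \{0\} \cup [\nu,\infty)$. By Proposition~\ref{proposition: positive elements in algebra} this positivity is governed by the closure of the sum-of-squares cone $\Sigma^2\mathbb{C}_\mathrm{u}[X]$, which is the form in which the condition will be transported to the boundary groups.

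I expect the main obstacle to be the construction of the Kazhdan-type projection $p_0$ \emph{within} the maximal uniform Roe algebra and the verification that it implements the kernel projection in \emph{every} representation at once. The gap hypothesis is precisely what isolates $0$ and legitimizes the functional-calculus projection; the genuine subtlety is reconciling the ``one $\nu$ for all $\pi$'' quantifier built into geometric property $\mathrm{(T)}$ with the single spectral statement, and this is resolved by the faithfulness of $\|\cdot\|_\mathrm{max}$ on all $*$-representations together with the fact that continuous functional calculus commutes with representations.
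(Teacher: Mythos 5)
The first thing to know is that the paper contains no proof of this proposition: the authors explicitly adopt condition (2) as their working definition of geometric property $(\mathrm{T})$ and outsource the equivalence with the original formulation to Willett--Yu (Proposition 5.7 in \cite{WillettYu13}). So your proposal supplies an argument where the paper supplies only a citation, and what you write is essentially a correct reconstruction of the Willett--Yu argument. The two mechanisms you isolate are the right ones. First, for self-adjoint $a\in\mathbb{C}_\mathrm{u}[X]$, the spectrum of $a$ in $\mathrm{C}^*_\mathrm{u, max}[X]$ is the closure of $\bigcup_\pi \mathrm{Sp}(\pi(a))$ over all $*$-representations $\pi$; to use this you should note that every $*$-representation of $\mathbb{C}_\mathrm{u}[X]$ is automatically bounded (for a partial translation $t$, $t^*t$ is a self-adjoint idempotent, so $\|\pi(t)\|\le 1$, and $\|\pi(\xi)\|\le\|\xi\|_\infty$ for diagonal $\xi$; now invoke $\mathbb{C}_\mathrm{u}[X]=\mathrm{span}(\ell_\infty(X)\Gamma_X)$ from Lemma~\ref{lemma:algroestr}), so that $\|\cdot\|_\mathrm{max}$ is finite and every $\pi$ factors through the maximal completion. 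Second, a gap in the maximal algebra produces a Kazhdan projection $p_0=\chi_{\{0\}}(\Delta)\in\mathrm{C}^*_\mathrm{u, max}[X]$ with $\Delta p_0=0$ and $\Delta\ge\nu(1-p_0)$, relations preserved by every $\pi$; here continuous functional calculus already suffices, since $\chi_{\{0\}}$ is continuous on $\{0\}\cup[\nu,\infty)$, so no contour integral is needed.

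The one point you gloss over is the identification of $\ker\pi(\Delta)$ with the Willett--Yu space of \emph{constant} vectors $\mathcal{H}_c$, i.e., the vectors annihilated by $\pi({\mathcal L}_{\mathrm u}[X])$. Their definition of geometric property $(\mathrm{T})$ demands a uniform gap above $\mathcal{H}_c$, not above $\ker\pi(\Delta)$, so your phrase ``$p_0$ the spectral projection onto the space $\ker\pi(\Delta)$ of $\pi$-invariant vectors'' silently conflates the two. They do coincide, but by a lemma rather than by fiat: writing $\Delta=\sum_t(1-t)^*(1-t)$ as a sum over the edge partial translations gives $\langle\pi(\Delta)\xi,\xi\rangle=\sum_t\|\pi(1-t)\xi\|^2$, whence $\ker\pi(\Delta)=\mathcal{H}_c$ in every representation. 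With that inserted, your argument is complete. Two smaller remarks: your observation that $0$ lies in the spectrum uses that the components $X^{(m)}$ are finite (constant $\ell_2$-functions on a single component), which is the paper's setting but not automatic for general uniformly locally finite $X$; and your closing reformulation --- the gap is equivalent to $\Delta^2-\nu\Delta\ge 0$ in $\mathrm{C}^*_\mathrm{u, max}[X]$, hence to approximate sums of squares via Proposition~\ref{proposition: positive elements in algebra} --- is exactly how the paper consumes this proposition in Section~\ref{sec:geomTtoT}, so it is worth having recorded.
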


In this paper, we will study when the disjoint union $X=\bigsqcup_{m\in{\mathbb N}} X^{(m)}$ 
of a sequence $\{X^{(m)}\}$ of (finite Cayley) metric spaces has geometric property $(\mathrm{T})$.
The answer is quite simple when $X$ is a \emph{finite} disjoint union. 

\begin{corollary}\label{corollary:finitedisjointunion}
The disjoint union $X=\bigsqcup_{i=1}^n X^{(i)}$ of finitely many spaces $\{ X_i \}_{i=1}^n$ 
has geometric property $(\mathrm{T})$ if and only if each $X^{(i)}$ 
has geometric property $(\mathrm{T})$. 
\end{corollary}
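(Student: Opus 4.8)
The plan is to use Proposition~\ref{lemma: spectral gap} to translate geometric property $(\mathrm{T})$ into a statement about the spectrum of the Laplacian, and then to exploit the fact that a \emph{finite} disjoint union splits the maximal uniform Roe algebra into a finite direct sum. First I would record the purely algebraic observation that
\[
{\mathbb C}_{\mathrm u}[X]=\bigoplus_{i=1}^{n}{\mathbb C}_{\mathrm u}[X^{(i)}].
\]
Indeed, since $d(X^{(i)},X^{(j)})=\infty$ for $i\ne j$, any matrix of finite propagation must vanish off the diagonal blocks, so every element of ${\mathbb C}_{\mathrm u}[X]$ is block diagonal with respect to the partition $X=\bigsqcup_i X^{(i)}$. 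Writing $p_i=\mathbf 1_{X^{(i)}}\in\ell_\infty(X)$ for the indicator function of the $i$-th component, the $p_i$ are mutually orthogonal central projections in ${\mathbb C}_{\mathrm u}[X]$ with $\sum_i p_i=1$, and $p_i\,{\mathbb C}_{\mathrm u}[X]\,p_i$ is canonically identified with ${\mathbb C}_{\mathrm u}[X^{(i)}]$.

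Second, I would upgrade this to the maximal completions, namely
\[
\mathrm C^*_{\mathrm u,\mathrm{max}}[X]=\bigoplus_{i=1}^{n}\mathrm C^*_{\mathrm u,\mathrm{max}}[X^{(i)}],
\]
which is the one point that requires an argument. Given any $*$-representation $\pi\colon{\mathbb C}_{\mathrm u}[X]\to{\mathbb B}({\mathcal H})$, each $\pi(p_i)$ is a projection (being a self-adjoint idempotent), and these projections are mutually orthogonal and central, so they decompose ${\mathcal H}$ and $\pi$ splits as a direct sum $\bigoplus_i\pi_i$, where $\pi_i$ is the restriction of $\pi$ to the $i$-th block and is a $*$-representation of ${\mathbb C}_{\mathrm u}[X^{(i)}]$. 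Consequently $\|\pi(a)\|=\max_i\|\pi_i(a_i)\|$ for $a=(a_i)_i$, and taking the supremum over all $\pi$ gives $\|a\|_{\mathrm{max}}=\max_i\|a_i\|_{\mathrm{max}}$, which is exactly the asserted direct-sum decomposition of C$^*$-algebras.

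Finally I would conclude by inspecting the Laplacian. Under the above decomposition the Laplacian of $X$ is $\Delta=\bigoplus_{i=1}^n\Delta_i$, where $\Delta_i$ is the Laplacian of $X^{(i)}$, so its spectrum in $\mathrm C^*_{\mathrm u,\mathrm{max}}[X]$ is the finite union $\bigcup_{i=1}^n\mathrm{spec}(\Delta_i)$. If $X$ has geometric property $(\mathrm{T})$ with gap $\nu$, then $\mathrm{spec}(\Delta_i)\subseteq\mathrm{spec}(\Delta)\subseteq\{0\}\cup[\nu,\infty)$ for each $i$, so each $X^{(i)}$ has geometric property $(\mathrm{T})$ by Proposition~\ref{lemma: spectral gap}. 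Conversely, if each $X^{(i)}$ has a gap $\nu_i>0$, then setting $\nu=\min_{1\le i\le n}\nu_i>0$ gives $\mathrm{spec}(\Delta)=\bigcup_i\mathrm{spec}(\Delta_i)\subseteq\{0\}\cup[\nu,\infty)$, whence $X$ has geometric property $(\mathrm{T})$.

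The only delicate step is the compatibility of the maximal norm with finite direct sums in the second paragraph; everything else is bookkeeping. I would stress that finiteness of $n$ is used decisively in the converse direction, where the minimum of finitely many positive gaps must itself be positive---this is precisely what can fail for an infinite disjoint union, and accounts for why the general (infinite) case treated in Theorem~\ref{theorem:main} is substantially harder.
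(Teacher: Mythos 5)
Your proposal is correct and takes essentially the same route as the paper: the paper's one-line proof asserts exactly the canonical isomorphism $\mathrm{C}^*_{\mathrm{u, max}}[X]=\bigoplus_{i=1}^n\mathrm{C}^*_{\mathrm{u, max}}\left[X^{(i)}\right]$ under which $\Delta=(\Delta_i)_{i=1}^n$, and then concludes via the spectral characterization in Proposition~\ref{lemma: spectral gap}. You have merely filled in the details the paper calls ``not difficult to see'' --- the block-diagonal structure of finite-propagation kernels, the splitting of representations along the central projections $\mathbf{1}_{X^{(i)}}$, and the fact that the spectrum of a finite direct sum is the union of the spectra.
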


\begin{proof}
Let $\Delta_i$ denote the Laplacian for $X^{(i)}$. 
It is not difficult to see that $\Delta=(\Delta_i)_{i=1}^n$ under 
the canonical isomorphism 
$\mathrm{C}^*_\mathrm{u, max}[X]=\bigoplus_{i=1}^n\mathrm{C}^*_\mathrm{u, max}\left[X^{(i)}\right]$.
\end{proof}

\section{From geometric property (T) to property (T)}\label{sec:geomTtoT}

Let us start the proof of Theorem \ref{theorem:main}: $(\ref{cond:maingeot})\Rightarrow(\ref{cond:maincaybdy})$;
assuming that the disjoint union 
$\bigsqcup_{m \in \mathbb{N}} \mathrm{Cay} \left(G^{(m)}, s_1^{(m)}, \ldots, s_k^{(m)} \right)$ 
has geometric property $\mathrm{(T)}$,
we shall prove that every member   
$\left( G, s_1, \ldots, s_k\right)$ of the Cayley boundary 
$\overline{\left\{ G^{(m)} \right\}}^{\mathrm{Cayley}} \setminus \left\{ G^{(m)} \right\}$ 
has property $\mathrm{(T)}$ of Kazhdan.
Replacing with a subsequence, we may assume that the sequence
$G^{(m)}$ converges to $G$ with respect to the Cayley topology.
See also Corollary~\ref{corollary:finitedisjointunion}.

Let $\Delta_m$ denote the discrete (nonnormalized) Laplacian in the group algebra $\mathbb{C} \left[ G^{(m)} \right]$,
which is given by
\[
\Delta_m = 2k - \sum_{j = 1}^{k} \left(s_j^{(m)} + {s_j^{(m)}}^{-1} \right).
\]
We view $\Delta_m$ also as an element in $\mathbb{C}_{\mathrm{u}} [\mathrm{Cay} \left( G^{(m)} \right)]$ 
and consider the direct product $\Delta = \left( \Delta_m \right) _m
\in \mathbb{C}_\mathrm{u} \left[ \bigsqcup_{m} \mathrm{Cay} \left( G^{(m)} \right) \right]$.
By assumption, the spectrum of $\Delta$ in the maximal uniform Roe algebra 
$\mathrm{C}^*_\mathrm{u, max} \left[ \bigsqcup_{m} \mathrm{Cay} \left( G^{(m)} \right) \right]$ 
is included in $\{0\} \cup [\nu, \infty)$ for some $\nu>0$
(Proposition \ref{lemma: spectral gap}).
Thus the element 
$\Delta^2 - \nu \Delta \in \mathbb{C} _{\mathrm{u}} \left[ \bigsqcup_{m} \mathrm{Cay} \left( G^{(m)} \right) \right]$ 
is positive in the $\mathrm{C}^*$-alge\-bra 
$\mathrm{C}^*_\mathrm{u, max} [ \bigsqcup_{m} \mathrm{Cay} \left( G^{(m)} \right)]$, 
by the spectral mapping theorem. 
By Proposition \ref{proposition: positive elements in algebra}, for every positive number $\epsilon$, 
one has $\Delta^2 - \nu \Delta + \epsilon = \sum_{i=1}^n \eta_i^* \eta_i 
\in \Sigma^2\mathbb{C} _{\mathrm{u}} \left[ \bigsqcup_{m} \mathrm{Cay} \left( G^{(m)} \right) \right]$.
Let $R = \max_i \mathrm{prop}(\eta_i)$ and choose a large natural number $m$ such that 
there exists a partial isomorphism $\phi \colon B \left( 1_{G^{(m)}}, 2R \right) \to B \left( 1_{G}, 2R \right)$ between balls of $G^{(m)}$ and $G$.
Here, $\phi$ satisfies conditions in Lemma
\ref{lemma: partial isomorphism}.
Taking $m$-th entry of $\Delta$ and $\eta_i$,
we see that ${\Delta_m}^2 - \nu \Delta_m + \epsilon \in \Sigma^2_{n,R}\mathbb{C}_{\mathrm{u}} \left[\mathrm{Cay}\left(G^{(m)}\right) \right]$. 
By Lemma~\ref{lemma: square sum},
${\Delta_m}^2 - \nu \Delta_m + \epsilon \in \mathbb{C}  \left[ G^{(m)} \right]$ is a sum of squares of operators  $\xi_i$ in the group algebra $\mathbb{C} \left[ G^{(m)} \right]$ with propagation at most $R$. 
Extending the partial isomorphism $\phi$ to the linear map
$\phi \colon \mathbb{C}  \left[ B \left(1_{G^{(m)}}, 2R\right) \right] \to \mathbb{C} \left[ B \left(1_{G}, 2R \right) \right]$
and applying it to
\[ {\Delta_m}^2 - \nu \Delta_m + \epsilon = 
\sum_i \xi_i^* \xi_i,\]
we see that the Laplacian $\Delta_G$ in the group algebra $\mathbb{C} [G]$ satisfies
\[{\Delta_G}^2 - \nu \Delta_G + \epsilon = 
\sum_i \phi(\xi_i)^* \phi(\xi_i).\]
This implies that ${\Delta_G}^2 - \nu \Delta_G + \epsilon$ is positive 
in the maximal group algebra $\mathrm{C}^*_\mathrm{max} [G]$.
Since $\epsilon>0$ was arbitrary, the spectrum of $\Delta_G$
is included in $\{0\} \cup [\nu, \infty)$, by the spectral mapping theorem.
This means that $G$ has property $\mathrm{(T)}$.
\qed

The key step in the above proof is to show that 
$\Delta_G^2 - \nu \Delta_G + \epsilon$ is a sum of squares for every $\epsilon > 0$.
To check whether a finitely generated group $G$ has property $\mathrm{(T)}$, 
we in fact do not need the extra $\epsilon$. 

\begin{theorem}[\cite{OzawaSquareSum}]\label{theorem:noepsilon}
A marked group $G$ has property $\mathrm{(T)}$ if and only if there exist a positive number $\nu$ and $\xi_1, \ldots, \xi_n \in \mathbb{R} [G]$ such that
\[\Delta_G^2 - \nu \Delta_G = \sum_{i =1}^n \xi_i^* \xi_i.\]
\end{theorem}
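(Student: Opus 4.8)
The plan is to treat the two implications separately, with essentially all of the work in the forward ("only if") direction. The elementary input is that, writing $\Delta_G=\sum_{j=1}^k(1-s_j)^*(1-s_j)$, the Laplacian is already a sum of squares, so in particular $\Delta_G\ge 0$ in $\mathrm{C}^*_\mathrm{max}[G]$; the conceptual input is that, in contrast to the general cone $\Sigma^2\mathbb{R}[G]$, the cone of sums of squares of \emph{bounded propagation} is closed, and this closedness is precisely what lets one dispose of the $\epsilon$ appearing in Proposition~\ref{proposition: positive elements in algebra}.

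For the ("if") direction I would argue as follows. If $\Delta_G^2-\nu\Delta_G=\sum_{i=1}^n\xi_i^*\xi_i$ with $\nu>0$ and $\xi_i\in\mathbb{R}[G]$, then $\Delta_G^2-\nu\Delta_G$ is positive in every $*$-representation, hence positive in $\mathrm{C}^*_\mathrm{max}[G]$. Applying the spectral mapping theorem to $t\mapsto t^2-\nu t$ and using $\Delta_G\ge 0$, the spectrum of $\Delta_G$ is forced into $\{0\}\cup[\nu,\infty)$, so $G$ has property $\mathrm{(T)}$ by Proposition~\ref{proposition: (T)}.

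For the ("only if") direction, property $\mathrm{(T)}$ gives via Proposition~\ref{proposition: (T)} a constant $\kappa>0$ with $\mathrm{spec}(\Delta_G)\subset\{0\}\cup[\kappa,\infty)$, whence $\Delta_G^2-\kappa\Delta_G\ge 0$ in $\mathrm{C}^*_\mathrm{max}[G]$ and, by Proposition~\ref{proposition: positive elements in algebra}, $\Delta_G^2-\kappa\Delta_G+\epsilon\in\Sigma^2\mathbb{R}[G]$ for every $\epsilon>0$. For a fixed radius $R$ let $C_R$ denote the set of sums $\sum_i\xi_i^*\xi_i$ with $\mathrm{prop}(\xi_i)\le R$. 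Since such $\xi_i$ live in the finite-dimensional space spanned by $B(1_G,R)$, a rank bound lets one take the number of summands at most $\sharp B(1_G,R)$, and the coefficient of $1_G$ in an element of $C_R$ controls the $\ell_2$-norms of all the $\xi_i$; a standard compactness argument then shows $C_R$ is a \emph{closed} cone in the finite-dimensional space of self-adjoint elements of propagation $\le 2R$. The plan is to produce a single radius $R_0$, independent of $\epsilon$, with $\Delta_G^2-\kappa\Delta_G+\epsilon\in C_{R_0}$ for all small $\epsilon$; closedness of $C_{R_0}$ and letting $\epsilon\to 0$ then yield $\Delta_G^2-\kappa\Delta_G\in C_{R_0}\subset\Sigma^2\mathbb{R}[G]$, which is the desired identity with $\nu=\kappa$.

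The hard part is exactly this uniform propagation (degree) bound, which is the crux of the noncommutative Positivstellensatz and the reason the theorem is nontrivial. I expect to obtain it from a separation argument in the finite-dimensional spaces above: a linear functional nonnegative on $C_R$ is the same as a positive semidefinite ``truncated'' functional on $B(1_G,R)$, that is, a truncated positive-definite function on $G$. If $\Delta_G^2-\nu\Delta_G\notin C_R$ along a sequence $R\to\infty$, Hahn--Banach produces such truncated functionals on ever larger balls making $\Delta_G^2-\nu\Delta_G$ fail to be positive; after normalizing and passing to a pointwise limit one would extract an honest positive-definite function on $G$, hence a representation of $\mathrm{C}^*_\mathrm{max}[G]$ violating the spectral gap guaranteed by $\mathrm{(T)}$. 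Making this limit legitimate, so that the truncated certificates converge to a genuine state on the full group $\mathrm{C}^*$-algebra, is the delicate point; an alternative is to exhibit $R_0$ directly by approximating the Kazhdan projection $p$ (the spectral projection of $\Delta_G$ at $0$, which lies in $\mathrm{C}^*_\mathrm{max}[G]$ because $0$ is isolated) by bounded-propagation sums of squares and using $\Delta_G^2-\nu\Delta_G\ge\kappa(\kappa-\nu)(1-p)$ for $\nu<\kappa$.
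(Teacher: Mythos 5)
You are attempting to prove a statement that the paper itself does not prove: Theorem~\ref{theorem:noepsilon} is imported verbatim from \cite{OzawaSquareSum}, so your attempt must be measured against that reference, not against anything in this paper. Within your sketch, two things are correct and do match the beginning of the cited proof: the ``if'' direction (positivity of $\Delta_G^2-\nu\Delta_G$ in $\mathrm{C}^*_\mathrm{max}[G]$, together with $\Delta_G=\sum_j(1-s_j)^*(1-s_j)\ge0$ and spectral mapping, gives Proposition~\ref{proposition: (T)}), and the reduction of the ``only if'' direction to a uniform propagation bound via the closedness of the cones $C_R$ (the rank bound on the number of summands and the properness coming from the coefficient at $1_G$ are both sound).

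The genuine gap is in the crux, and it is not located where you flag it. Run your Hahn--Banach route with the normalization that makes your compactness claim work, namely $\varphi_R(1)=1$: Cauchy--Schwarz for the semi-inner product $(\xi,\eta)\mapsto\varphi_R(\eta^*\xi)$ gives $|\varphi_R(g)|\le1$ for $g\in B(1_G,2R)$, so a pointwise limit $\varphi$ exists along a subsequence and is automatically a genuine state on $\mathrm{C}^*_\mathrm{max}[G]$ --- contrary to your worry, this step is \emph{not} delicate. The failure is that the limit carries no contradiction: the strict inequality $\varphi_R(\Delta_G^2-\nu\Delta_G)<0$ survives only as $\varphi(\Delta_G^2-\nu\Delta_G)\le0$, and combining this with the spectral gap $\Delta_G^2-\kappa\Delta_G\ge0$ merely yields $(\kappa-\nu)\varphi(\Delta_G)\le0$, i.e.\ $\varphi(\Delta_G)=0$. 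Nothing rules out that $\varphi$ is simply the augmentation character $g\mapsto1$, which satisfies every constraint that survives the limit, for \emph{every} group; so no violation of property $(\mathrm{T})$ is produced and the argument stalls. To retain the contradiction one must instead normalize at the Laplacian, $\varphi_R(\Delta_G)=1$, so that in the limit $0\ge\varphi(\Delta_G^2-\nu\Delta_G)\ge(\kappa-\nu)\varphi(\Delta_G)=\kappa-\nu>0$; but then the uniform pointwise bounds needed for compactness are exactly what is missing, and supplying them is the real content of \cite{OzawaSquareSum}: one works inside the augmentation ideal $\ker\omega$ and proves that $\Delta_G$ dominates the relevant squares \emph{in the sum-of-squares order} (telescoping over a word $g=s_{j_1}\cdots s_{j_n}$ gives $(g-1)^*(g-1)\le n\sum_i\bigl(s_{j_i}-1\bigr)^*\bigl(s_{j_i}-1\bigr)\le n^2\Delta_G$ modulo $\Sigma^2$, hence $\xi^*\xi\le C_\xi\Delta_G$ for $\xi\in\ker\omega$), which is the order-unit input absent from your sketch. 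Your fallback via the Kazhdan projection is circular for a related reason: $p$ is only a norm-limit of finite-propagation elements, the inequality $\Delta_G^2-\nu\Delta_G\ge\kappa(\kappa-\nu)(1-p)$ lives in $\mathrm{C}^*_\mathrm{max}[G]$, and Proposition~\ref{proposition: positive elements in algebra} converts norm-positivity into sums of squares only after adding $\epsilon1$ --- the very $\epsilon$ you are trying to remove; the norm-small self-adjoint error left after replacing $p$ by a finite-propagation approximant cannot be absorbed into $\Sigma^2$ without again losing control of the propagation.
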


\section{From property (T) to geometric property (T)}\label{Section:TtogeomT}
Shalom (Theorem 6.7 in \cite{ShalomUniformAction}) has showed that every property $(\mathrm{T})$ group is 
a quotient of finitely presented property $(\mathrm{T})$ group, and more generally that 
property $(\mathrm{T})$ is an \textit{open} property in the space of marked groups, 
namely, the set of all $k$-marked property $(\mathrm{T})$ groups is open in $\mathcal{G}(k)$.
(This fact also follows from Theorem~\ref{theorem:noepsilon} above.) 
See a survey of Y. Stalder \cite{Stalder} for a more general result. 
Recall that ${\mathcal Q}_\Gamma$ stands for the set of marked group quotients of $\Gamma$.

\begin{proposition}\label{proposition:(T) is a quotient of RF (T)}
Every $k$-marked group $G$ with property $\mathrm{(T)}$ is a quotient of 
a finitely presented $k$-marked group $\Gamma$ with property $\mathrm{(T)}$. 
Moreover, for every sequence $\{G^{(m)}\}_m$ in $\mathcal{G}(k)$ 
that converges to $G$ and for every such finitely presented $\Gamma$,
there exists $N\in\mathbb{N}$ such that $\{ G^{(m)} : m\geq N\} \subset {\mathcal Q}_\Gamma$.
\end{proposition}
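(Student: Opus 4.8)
The plan is to derive both assertions from a single observation: property $\mathrm{(T)}$ is an \emph{open} condition in $\mathcal{G}(k)$, which I would in turn establish from the sum-of-squares characterization of Theorem~\ref{theorem:noepsilon} by transporting a square-sum certificate along a partial isomorphism. The first assertion is exactly Shalom's theorem recalled above, so one could simply invoke \cite{ShalomUniformAction}; I prefer a self-contained route through Theorem~\ref{theorem:noepsilon}, which also exhibits why the hypothesis of finite presentability enters.

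First I would prove openness of property $\mathrm{(T)}$. Suppose $G$ has property $\mathrm{(T)}$. By Theorem~\ref{theorem:noepsilon} there are $\nu > 0$ and $\xi_1, \ldots, \xi_n \in \mathbb{R}[G]$ with $\Delta_G^2 - \nu \Delta_G = \sum_{i=1}^n \xi_i^* \xi_i$; let $R$ be a common bound for the propagations of the $\xi_i$. Given any $H \in N(G, R)$, take the partial isomorphism $\phi \colon B(1_H, 2R) \to B(1_G, 2R)$ of Lemma~\ref{lemma: partial isomorphism}. Its inverse $\phi^{-1}$ again satisfies the three defining conditions, and extending $\phi^{-1}$ linearly to $\mathbb{R}[B(1_G, 2R)] \to \mathbb{R}[B(1_H, 2R)]$ and setting $\tilde\xi_i := \phi^{-1}(\xi_i) \in \mathbb{R}[H]$, I would verify that the certificate transfers verbatim, namely $\Delta_H^2 - \nu \Delta_H = \sum_i \tilde\xi_i^* \tilde\xi_i$ in $\mathbb{R}[H]$. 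The converse direction of Theorem~\ref{theorem:noepsilon} then forces $H$ to have property $\mathrm{(T)}$, so $N(G, R)$ consists of $\mathrm{(T)}$ groups and property $\mathrm{(T)}$ is open.

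For the first assertion I would invoke Lemma~\ref{lemma:convergence from the above} to obtain finitely presented groups converging to $G$, each having $G$ as a quotient. Since property $\mathrm{(T)}$ is open and $G$ enjoys it, all sufficiently far members of this sequence have property $\mathrm{(T)}$; any such member $\Gamma$ is the desired finitely presented $\mathrm{(T)}$ group having $G$ as a quotient. For the ``moreover'' part I would use that $\Gamma$ is finitely presented, so $\mathcal{Q}_\Gamma$ is clopen by Lemma~\ref{lemma: quotients form clopen subset}; since $G \in \mathcal{Q}_\Gamma$ and $G^{(m)} \to G$, the open set $\mathcal{Q}_\Gamma$ eventually contains $G^{(m)}$, yielding the required $N$. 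Note this last step uses neither property $\mathrm{(T)}$ nor the lifting, but only openness of $\mathcal{Q}_\Gamma$.

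The main obstacle is the term-by-term transfer of the identity, the subtlety being that $\phi^{-1}$ is merely linear, not an algebra map; it respects a product $a\cdot b$ only when $a,b \in B(1_G, R)$ and $ab \in B(1_G, 2R)$. The real work is therefore the bookkeeping that every product occurring in $\Delta_G^2$ (products of generators) and in each $\xi_i^* \xi_i$ (products of pairs from the support $B(1_G, R)$) stays inside the region where the three relations defining the partial isomorphism---matching generators, commuting with inversion, and multiplicativity on $B(1_G, R)$---apply. Once the support conditions are in place, checking $\phi^{-1}(\xi_i^*\xi_i) = \tilde\xi_i^* \tilde\xi_i$ and $\phi^{-1}(\Delta_G^2 - \nu\Delta_G) = \Delta_H^2 - \nu \Delta_H$ is routine; this is precisely why $N(G, R)$, with domain $B(1, 2R)$ and multiplicativity on $B(1, R)$, is the correct neighborhood to work in.
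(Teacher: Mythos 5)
Your proposal is correct, and it diverges from the paper only in how the first assertion is handled. The ``moreover'' part is argued exactly as in the paper: since $\Gamma$ is finitely presented, ${\mathcal Q}_\Gamma$ is clopen by Lemma~\ref{lemma: quotients form clopen subset}, it contains $G$, and $G^{(m)}\to G$, so a tail of the sequence lies in ${\mathcal Q}_\Gamma$; as you observe, neither property $(\mathrm{T})$ nor the lifting enters there. For the first assertion, the paper simply cites Shalom (Theorem 6.7 in \cite{ShalomUniformAction}), with Lemma~\ref{lemma:convergence from the above} as a pointer, whereas you carry out the self-contained derivation that the paper only gestures at in the parenthetical remark opening Section~\ref{Section:TtogeomT}: openness of property $(\mathrm{T})$ deduced from Theorem~\ref{theorem:noepsilon} by transporting the certificate $\Delta_G^2-\nu\Delta_G=\sum_i\xi_i^*\xi_i$ along a partial isomorphism, then combined with Lemma~\ref{lemma:convergence from the above}. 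Your transfer argument is sound, and it is the mirror image of the paper's own proof of $(\ref{cond:maingeot})\Rightarrow(\ref{cond:maincaybdy})$ in Section~\ref{sec:geomTtoT}, where a sum-of-squares identity (there with the extra $\epsilon$) is pushed along $\phi$ in the opposite direction. What your route buys is independence from Shalom's theorem and a transparent view of where finite presentability and the neighborhoods $N(G,R)$ enter; what the citation buys is brevity. The one point you should spell out is your claim that $\phi^{-1}$ again satisfies the three defining conditions: multiplicativity of $\phi^{-1}$ on $B(1_G,R)$ presupposes that $\phi^{-1}$ maps $B(1_G,R)$ into $B(1_H,R)$, which holds because $\phi$, matching generators and being multiplicative on $B(1_H,R)$, restricts to a word-length-preserving bijection from $B(1_H,R)$ onto $B(1_G,R)$. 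With that observation in place, your coefficient bookkeeping for $\phi^{-1}(\xi_i^*\xi_i)=\tilde\xi_i^*\tilde\xi_i$ and for the Laplacian terms goes through, so the proof is complete.
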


\begin{proof}
The first assertion is proved in \cite{ShalomUniformAction} and the second follows 
from Lemma~\ref{lemma: quotients form clopen subset}. See also Lemma~\ref{lemma:convergence from the above}.
\end{proof}

\begin{lemma}\label{lemma:finitely many RF (T)}
Let $K$ be a nonempty closed subset of $\mathcal{G}(k)$.
If every element of $K$ has property $\mathrm{(T)}$,
then there exist finitely many finitely presented property $\mathrm{(T)}$ groups 
$\Gamma_1, \ldots, \Gamma_n \in \mathcal{G}(k)$ such that
$K\subset\bigcup_i {\mathcal Q}_{\Gamma_i}$.
\end{lemma}

\begin{proof}
By Proposition~\ref{proposition:(T) is a quotient of RF (T)}, $\{{\mathcal Q}_\Gamma \ | \ \Gamma \textrm{\ is a f.p.\ group with property $\mathrm{(T)}$}\}$ is an \textit{open} covering of $K$. 
Since the subset $K$ is compact,
there exist finitely many groups $\Gamma_1, \ldots, \Gamma_n$ such that
$K \subset \bigcup_i {\mathcal Q}_{\Gamma_i}$.
\end{proof}

We begin the proof of Theorem \ref{theorem:main}: $(\ref{cond:maincaybdy})\Rightarrow(\ref{cond:maingeot})$.
Suppose that every $k$-marked group
$G$ in the boundary
$\overline{\left\{ G^{(m)} \right\}}^{\mathrm{Cayley}} \setminus \left\{ G^{(m)} \right\}$ has property $\mathrm{(T)}$.
Then every group in the Cayley closure
$\overline{\left\{ G^{(m)} \right\}}^{\mathrm{Cayley}}$ has property $\mathrm{(T)}$.
By Lemma \ref{lemma:finitely many RF (T)}, there exists a decomposition of indices 
$\mathbb{N} = I_1 \sqcup I_2 \sqcup \cdots \sqcup I_n$ 
such that $G^{(m)}$ is a quotient of $\Gamma_i$ for every $m \in I_i$. 
By Corollary~\ref{corollary:finitedisjointunion}, it suffices to show that 
each $X^{(i)}=\bigsqcup_{m \in I_i} \mathrm{Cay} \left(G^{(m)}\right)$ has 
geometric property $\mathrm{(T)}$.
The group homomorphisms $\phi^{(m)} \colon \Gamma_i \to G^{(m)}$, $m \in I_i$, 
induce the $*$-homo\-mor\-phism
\[ \phi_i \colon \mathbb{C} [\Gamma_i] 
\longrightarrow
\prod_{m \in I_i} \mathbb{C} \left[ G^{(m)} \right]
\subset
\mathrm{C}^*_{\mathrm{u, max}}\left[ X^{(i)} \right]
\]
which sends the Laplacian $\Delta_i \in \mathbb{C} [\Gamma_i]$ of 
the group $\Gamma_i$ to the Laplacian of $X^{(i)}$. 
Thus Proposition~\ref{lemma: spectral gap} implies 
that $X^{(i)}$ has geometric property $\mathrm{(T)}$.
\qed

\section{Remarks}\label{section:remarks}
\subsection*{Uniformity of spectral gaps}
Let $\nu(G, S)$ be the infimum of strictly positive spectra
of the discrete Laplacian $\Delta_G$ in $\mathrm{C}^*_\mathrm{max} [G]$.

\begin{proposition}\label{Proposition: Uniform Kazhdan}
Let $K$ be a nonempty closed subset of $\mathcal{G}(k)$ consisting 
of groups with Kazhdan's property $\mathrm{(T)}$.
Then there exists a strictly positive number $\nu$ such that
$\nu \le \nu(G, S)$ for every $(G, S) \in K$.
\end{proposition}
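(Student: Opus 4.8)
The plan is to combine the compactness of $K$ with a monotonicity principle: the spectral gap $\nu(G,S)$ can only increase when one passes to a marked quotient. Since every element of $K$ has property $\mathrm{(T)}$, Lemma~\ref{lemma:finitely many RF (T)} supplies finitely many finitely presented property $\mathrm{(T)}$ groups $\Gamma_1, \ldots, \Gamma_n \in \mathcal{G}(k)$ with $K \subset \bigcup_{i=1}^n \mathcal{Q}_{\Gamma_i}$. The entire problem then reduces to showing that for each $i$ and each $(G,S) \in \mathcal{Q}_{\Gamma_i}$ one has $\nu(G,S) \ge \nu(\Gamma_i, S_i)$, where $S_i$ denotes the marking of $\Gamma_i$; the final constant will be $\nu = \min_i \nu(\Gamma_i, S_i)$, which is strictly positive as a minimum of finitely many positive numbers.

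The heart of the argument is this monotonicity step, which I would establish through the sum-of-squares characterization of property $\mathrm{(T)}$. Fix $i$, set $\nu_i := \nu(\Gamma_i, S_i) > 0$, and invoke Theorem~\ref{theorem:noepsilon} to obtain $\xi_1, \ldots, \xi_\ell \in \mathbb{R}[\Gamma_i]$ with
\[
\Delta_{\Gamma_i}^2 - \nu_i \Delta_{\Gamma_i} = \sum_{j=1}^\ell \xi_j^* \xi_j.
\]
A marked surjection $\phi \colon \Gamma_i \to G$ induces a $*$-homomorphism $\phi \colon \mathbb{C}[\Gamma_i] \to \mathbb{C}[G]$, and since $\phi$ carries the generators of $\Gamma_i$ to those of $G$, it sends $\Delta_{\Gamma_i}$ to the Laplacian $\Delta_G$. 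Applying $\phi$ to the displayed identity gives
\[
\Delta_G^2 - \nu_i \Delta_G = \sum_{j=1}^\ell \phi(\xi_j)^* \phi(\xi_j) \in \Sigma^2 \mathbb{C}[G],
\]
so $\Delta_G^2 - \nu_i \Delta_G$ is positive in $\mathrm{C}^*_\mathrm{max}[G]$. By the spectral mapping theorem the spectrum of $\Delta_G$ lies in $\{0\} \cup [\nu_i, \infty)$, whence $\nu(G,S) \ge \nu_i$, as desired.

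With monotonicity in hand, any $(G,S) \in K$ lies in some $\mathcal{Q}_{\Gamma_i}$, and the preceding step yields $\nu(G,S) \ge \nu_i \ge \nu$, completing the proof. I expect the monotonicity step to be the only substantive point; once Theorem~\ref{theorem:noepsilon} is granted it is routine, and the reduction to finitely many groups via compactness is purely formal. If one prefers to bypass the $\epsilon$-free Theorem~\ref{theorem:noepsilon}, the same conclusion follows by observing through Proposition~\ref{proposition: positive elements in algebra} that $\Delta_{\Gamma_i}^2 - \nu_i \Delta_{\Gamma_i} + \epsilon \in \Sigma^2 \mathbb{C}[\Gamma_i]$ for every $\epsilon > 0$, pushing this forward through $\phi$ to get $\Delta_G^2 - \nu_i \Delta_G + \epsilon \in \Sigma^2 \mathbb{C}[G]$, and then letting $\epsilon \to 0$.
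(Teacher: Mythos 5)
Your proof is correct, and its first half coincides with the paper's: both arguments invoke Lemma~\ref{lemma:finitely many RF (T)} to cover the compact set $K$ by finitely many quotient sets $\mathcal{Q}_{\Gamma_i}$ and then reduce everything to monotonicity of the spectral gap under marked quotients. Where you diverge is in how that monotonicity is established. The paper's proof is a one-line representation-theoretic observation: every unitary representation of $G$ pulls back along the surjection $\Gamma_i \twoheadrightarrow G$ to a representation of $\Gamma_i$ sending $\Delta_{\Gamma_i}$ to $\Delta_G$, so the spectrum of $\Delta_G$ in $\mathrm{C}^*_\mathrm{max}[G]$ is contained in that of $\Delta_{\Gamma_i}$ in $\mathrm{C}^*_\mathrm{max}[\Gamma_i]$, giving $\nu(\Gamma_i) \le \nu(G)$ directly. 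You instead push a sum-of-squares certificate forward through the induced $*$-homomorphism $\mathbb{C}[\Gamma_i] \to \mathbb{C}[G]$ and then apply spectral mapping; this is valid but heavier, since it routes an elementary fact through the nontrivial Theorem~\ref{theorem:noepsilon}. One small imprecision to note: Theorem~\ref{theorem:noepsilon} produces \emph{some} $\nu > 0$ admitting the identity $\Delta_{\Gamma_i}^2 - \nu\Delta_{\Gamma_i} \in \Sigma^2\mathbb{R}[\Gamma_i]$, not necessarily $\nu = \nu(\Gamma_i, S_i)$ itself, so your displayed identity with $\nu_i := \nu(\Gamma_i,S_i)$ is not literally what that theorem grants; this is harmless for the proposition (any positive constant per $\Gamma_i$ suffices), and your own $\epsilon$-variant via Proposition~\ref{proposition: positive elements in algebra} repairs it cleanly and does yield the sharp bound $\nu(G,S) \ge \nu(\Gamma_i,S_i)$ --- indeed that variant is exactly the mechanism the paper uses in Section~\ref{sec:geomTtoT}, just transplanted from the Roe-algebra setting to the group-algebra setting. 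What the paper's route buys is brevity and self-containedness; what yours buys is a template that also works in settings where one only has algebraic positivity certificates rather than direct access to representations.
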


\begin{proof}
By Lemma \ref{lemma:finitely many RF (T)}, there exist finitely many groups $\Gamma_1, \Gamma_2, \ldots, \Gamma_n$
with property $\mathrm{(T)}$ such that every member $G$ of $K$
is a quotient of one of $\Gamma_i$.
Since every representation of $G$ provides a representation of $\Gamma_i$, we have
$ 0<\min_i \nu(\Gamma_i) \le \nu(G)$.
\end{proof}

Proposition~\ref{Proposition: Uniform Kazhdan} roughly states that for a (nonempty) compact set $K$ in $\mathcal{G}(k)$, uniformity concerning property $(\mathrm{T})$ is automatic once all of the members of $K$ have property $(\mathrm{T})$. In \cite{MimuraSakoI}, two of the authors have showed that concerning amenability, a similar phenomenon occurs, see the proof Theorem~5.1 there. On the other hand, two of the authors have revealed that \textit{concerning a-$\mathrm{T}$-menability, the uniformity is not automatically guaranteed}. For details, see the part II paper \cite{MimuraSakoII}.
In \cite{MimuraSakoIV},
we further study the spectral gap and Kazhdan constant.
These constants define functions on the space of marked groups.
We will prove that these are lower semi-continuous functions on $\mathcal{G}(k)$.  (Note that for spectral gaps, it can be also derived 
from the main result of \cite{OzawaSquareSum}. See Theorem~\ref{theorem:noepsilon}.) 
We also study spectral gaps and Kazhdan-type constants for isometric actions on certain metric spaces, including uniformly convex Banach spaces and complete $\mathrm{CAT}(0)$ spaces.

\subsection*{Uniformity on amenability}

\begin{proposition}
Let $\left\{ \left( G^{(m)}, s^{(m)}_1, \ldots, s^{(m)}_k \right) \right\}_{m \in \mathbb{N}}$ be a sequence of amenable $k$-marked groups. 
Suppose that the sequence converges to a $k$-marked group $G^{(\infty)}$.
Then the following conditions are equivalent:
\begin{enumerate}
\item\label{item:property A}
the metric space $\bigsqcup_m \mathrm{Cay} \left( G^{(m)} \right)$ has property $\mathrm{A}$ of Yu,
\item\label{item:amenable group}
the limit group $G^{(\infty)}$ is amenable,
\item\label{item:quotient of ame}
there exists an amenable $k$-marked group $\Gamma$ such that for every $m$, the group $G^{(m)}$ is a quotient of $\Gamma$.
\end{enumerate}
\end{proposition}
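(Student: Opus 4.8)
The plan is to run the implications $(\ref{item:quotient of ame})\Rightarrow(\ref{item:amenable group})\Leftrightarrow(\ref{item:property A})$ together with $(\ref{item:amenable group})\Rightarrow(\ref{item:quotient of ame})$, isolating the last one as the only substantial step. Throughout I would identify each marked group with a quotient of $(F_k,a_1,\ldots,a_k)$ and write $N^{(m)},N^{(\infty)}\triangleleft F_k$ for the kernels of the markings onto $G^{(m)}$ and $G^{(\infty)}$, so $G^{(m)}=F_k/N^{(m)}$ and $G^{(\infty)}=F_k/N^{(\infty)}$. For $(\ref{item:property A})\Leftrightarrow(\ref{item:amenable group})$ I would first note that, since $G^{(m)}\to G^{(\infty)}$, the limit $G^{(\infty)}$ is the unique accumulation point of the sequence, so $\partial_{\mathrm{Cay}}(\{G^{(m)}\}_m)\subseteq\{G^{(\infty)}\}$; hence the clause ``every member of the boundary is amenable'' is exactly the amenability of $G^{(\infty)}$ (in the degenerate case where the boundary is empty, $G^{(\infty)}$ coincides with some $G^{(m)}$ and is amenable by hypothesis anyway). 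The part~I result \cite{MimuraSakoI}, that $X$ has property $\mathrm{A}$ if and only if every boundary member is amenable, then yields $(\ref{item:property A})\Leftrightarrow(\ref{item:amenable group})$ directly.

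Next I would dispatch $(\ref{item:quotient of ame})\Rightarrow(\ref{item:amenable group})$. For any $\Gamma$ the set ${\mathcal Q}_\Gamma$ of marked quotients of $\Gamma$ is the intersection $\bigcap_w O(w)$ of the clopen sets $O(w)$ taken over the relations $w$ of $\Gamma$ (as in the proof of Lemma~\ref{lemma: quotients form clopen subset}), hence closed. Since it contains every $G^{(m)}$, it contains the limit $G^{(\infty)}$, so $G^{(\infty)}$ is a quotient of the amenable group $\Gamma$ and is therefore amenable.

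The heart of the matter is $(\ref{item:amenable group})\Rightarrow(\ref{item:quotient of ame})$. Here I would take for $\Gamma$ the diagonal group $W:=F_k/\bigcap_m N^{(m)}$ with its induced marking; as $\bigcap_m N^{(m)}\subseteq N^{(m)}$, each $G^{(m)}$ is a marked quotient of $W$, so it suffices to prove that $W$ is amenable. First one checks that $\bigcap_m N^{(m)}\subseteq N^{(\infty)}$: a word lying in every $N^{(m)}$ but not in $N^{(\infty)}$ would, by convergence, have to leave $N^{(m)}$ for all large $m$, a contradiction. Consequently the identity of $F_k$ induces a surjection $W\twoheadrightarrow G^{(\infty)}$ with kernel $K:=N^{(\infty)}/\bigcap_m N^{(m)}$, giving a short exact sequence $1\to K\to W\to G^{(\infty)}\to 1$. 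Since amenability passes to extensions and $G^{(\infty)}$ is amenable, everything reduces to showing that $K$ is amenable.

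The crux, and the step I expect to be the main obstacle, is that $K$ in fact lands in the restricted direct sum $\bigoplus_m G^{(m)}$ under the canonical embedding $W\hookrightarrow\prod_m G^{(m)}$. To see this, let $w\in N^{(\infty)}$ have length at most $R$. By Lemma~\ref{lemma: partial isomorphism}, for all large $m$ there is a partial isomorphism between the radius-$2R$ balls of $G^{(m)}$ and of $G^{(\infty)}$; since the relation $w=1$ holds in $G^{(\infty)}$ and can be evaluated inside the radius-$R$ ball, it also holds in such $G^{(m)}$, that is $w\in N^{(m)}$. Thus the image of the class of $w$ is trivial in all but finitely many coordinates, so $K\subseteq\bigoplus_m G^{(m)}$. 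The latter is the increasing union of the finite products $G^{(1)}\times\cdots\times G^{(n)}$ of amenable groups and is therefore amenable, whence its subgroup $K$ is amenable, $W$ is amenable, and $\Gamma=W$ witnesses $(\ref{item:quotient of ame})$. It is precisely here that Cayley convergence is indispensable: without the ball-stabilization furnished by Lemma~\ref{lemma: partial isomorphism}, $W$ would only be residually amenable, which would not force amenability.
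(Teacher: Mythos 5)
Your proof is correct and takes essentially the same route as the paper: your group $W=F_k/\bigcap_m N^{(m)}$ is exactly the paper's $\Gamma\le\prod_m G^{(m)}$ generated by the diagonal elements $\gamma_j=\left(s_j^{(m)}\right)_m$, and your exact sequence $1\to K\to W\to G^{(\infty)}\to 1$ with $K\subseteq\bigoplus_m G^{(m)}$ amenable is the paper's $1\to H\to\Gamma\to G^{(\infty)}\to 1$ argument, just as your closedness of ${\mathcal Q}_\Gamma$ and the citation of the part~I theorem match the paper's other two implications. The only (harmless) difference is that you justify, via Lemma~\ref{lemma: partial isomorphism}, why elements of $K$ are trivial in all but finitely many coordinates, a point the paper asserts without detailed proof.
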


\begin{proof}
By Theorem 5.1 in \cite{MimuraSakoI}, conditions $(\ref{item:property A})$ and $(\ref{item:amenable group})$ are equivalent.
Suppose condition (\ref{item:quotient of ame}) holds.
The set ${\mathcal Q}_\Gamma$ forms a closed subset in the space of $k$-marked groups and consists of amenable groups.
It follows that the closure $\overline{\{G^{(m)}\}_m}^\mathrm{Cay}$ is contained in ${\mathcal Q}_\Gamma$. 
In particular, condition (\ref{item:amenable group}) holds.
Conversely, suppose condition (\ref{item:amenable group}) holds.
Define a $k$-marked group $(\Gamma, \gamma_1, \ldots, \gamma_k)$ by the subgroup of 
$\prod_{m=1}^\infty G^{(m)}$ generated by $\gamma_1 = \left( s^{(m)}_1 \right)_m$, $\ldots$, $\gamma_k = \left( s^{(m)}_k \right)_m$.
Each $G^{(m)}$ is a quotient of $\Gamma$.
Note that the limit group $G^{(\infty)}$ is also a quotient of $\Gamma$.
Let $H$ be the kernel of the quotient map $\Gamma \to G^{(\infty)}$.
The group $H$ is equal to
\[\left\{ \left( g^{(m)} \right)_m \in \Gamma \mid
\exists M \in \mathbb{N}, \forall m \ge M, g^{(m)} = 1_{G^{(m)}} 
\right\}.\]
The subgroup $H_M$ defined by
\[\left\{ \left( g^{(m)} \right)_m \in \Gamma \mid 
\forall m \ge M, g^{(m)} = 1_{G^{(m)}} 
\right\}\]
is amenable, since $\bigoplus_{m =1}^M G^{(m)}$ is amenable.
Since $H$ is an increasing union of amenable groups $H_M$,
it is amenable.
Since $\Gamma$ is in the middle of the short exact sequence
\[1 \to H \to \Gamma \to G^{(\infty)} \to 1,\]
$\Gamma$ is also amenable.
Therefore condition (\ref{item:quotient of ame}) holds.
\end{proof}

\section{Cohomological property (T)}\label{section:cohomologicalT}
By the well-known Delorme--Guichardet theorem (Theorem 2.12.4 in \cite{BookBekkadelaHarpeValette}),
property (T) for a (locally compact $\sigma$-compact) group $G$ can be characterized 
by vanishing of the first cohomology group $H^1(G,{\mathcal H})$ of 
every unitary $G$-module ${\mathcal H}$. 
In this section, we study this phenomenon for coarse spaces. 
Namely, we will introduce cohomological property (T) 
for uniformly locally finite metric spaces and prove that 
it implies geometric property (T). 
For this purpose, we develop a cohomology theory for such spaces, 
in analogy with the cohomology theory for group actions. 
See \cite{Elek},  Chapter 8 in \cite{GromovAsym}, \cite{oguni}, \cite{Pansu}, 
and Chapter 5 in \cite{RoeLectureNote} for relevant results. 
We first work purely algebraically. 
So, we consider a unital algebra ${\mathcal A}$ (over ${\mathbb C}$)
together with an ``augmentation'' map $\omega$ from ${\mathcal A}$ onto a unital subalgebra ${\mathcal D}\subset{\mathcal A}$
that satisfies $\omega|_{{\mathcal D}}=\mathrm{id}_{{\mathcal D}}$ and $\omega(ab)=\omega(a\omega(b))$ for $a,b\in{\mathcal A}$.
(Although we are content with the unital setting, one may also want to look at a non-unital algebra ${\mathcal A}$ 
and a unital subalgebra ${\mathcal D}$ in the multiplier of ${\mathcal A}$.)
It follows that ${\mathcal L}:=\ker\omega$ is a left ideal of ${\mathcal A}$.
Prototypical examples are group algebras ${\mathbb C}[G]$ and
the unit character $\omega\colon{\mathbb C}[G]\to{\mathbb C}\subset{\mathbb C}[G]$;
and the algebraic uniform Roe algebras ${\mathbb C}_{\mathrm u}[X]$ of a uniformly locally finite metric 
space $X$ and $\omega\colon{\mathbb C}_{\mathrm u}[X]\to\ell_\infty(X)\subset{\mathbb C}_{\mathrm u}[X]$, 
given by $\omega(a)(x)=\sum_y a_{x,y}$ for $a=[a_{x,y}]_{x,y\in X}\in{\mathbb C}_{\mathrm u}[X]$ 
(this augmentation map $\omega$ is denoted by $\Phi$ in \cite{WillettYu13}). 

For a given left ${\mathcal A}$-module ${\mathcal M}$, we define the cohomology groups $H^n({\mathcal A},{\mathcal M})$ to be
the relative Ext-groups of the ${\mathcal D}$-alge\-bra ${\mathcal A}$ with coefficients in ${\mathcal M}$ relative to ${\mathcal L}$.
Namely, $H^n({\mathcal A},{\mathcal M})=\ker d_{n+1}/\mathop{\mathrm{ran}} d_n$ for the cochain complex
\begin{align*}
{\mathcal M} &\stackrel{d_1}{\longrightarrow} \mathrm{Hom}_{{\mathcal D}}({\mathcal L},{\mathcal M}) \stackrel{d_2}{\longrightarrow}
 \mathrm{Hom}_{{\mathcal D}}({\mathcal A}\otimes_{{\mathcal D}}{\mathcal L},{\mathcal M}) \stackrel{d_3}{\longrightarrow} \cdots\\
 & \quad\cdots\stackrel{d_n}{\longrightarrow}
     \mathrm{Hom}_{{\mathcal D}}(\overbrace{{\mathcal A}\otimes_{{\mathcal D}}\cdots\otimes_{{\mathcal D}}{\mathcal A}}^{n-1}\otimes_{{\mathcal D}}{\mathcal L},{\mathcal M})
   \stackrel{d_{n+1}}{\longrightarrow}\cdots,
\end{align*}
where $\mathrm{Hom}_{{\mathcal D}}$ is the space of ${\mathcal D}$-module maps and
\begin{align*}
(d_n\theta)(a_1\otimes\cdots\otimes a_n) &=a_1\theta(a_2\otimes\cdots\otimes a_n)\\
&\quad + \sum_{i=1}^{n-1} (-1)^i
  \theta(a_1\otimes \cdots \otimes a_{i-1}\otimes a_ia_{i+1}\otimes a_{i+2}\otimes\cdots\otimes a_n).
\end{align*}
For example, $1$-cocycles are ${\mathcal A}$-module maps $\theta\colon{\mathcal L}\to{\mathcal M}$ and
$1$-coboundaries are those given by ${\mathcal L}\ni a\mapsto av\in{\mathcal M}$ for some $v\in {\mathcal M}$.
In case ${\mathcal A}={\mathbb C}[G]$ and $\omega\colon{\mathbb C}[G]\to{\mathbb C}\subset{\mathbb C}[G]$ is the unit character,
it is not difficult to check that $H^n({\mathcal A},{\mathcal M})$ is nothing but the group cohomology $H^n(G,{\mathcal M})$.
In case ${\mathcal A}={\mathbb C}_{\mathrm u}[X]$, we will denote $H^n({\mathcal A},{\mathcal M})$ by $H^n(X,{\mathcal M})$.

\begin{proposition}\label{proposition:crossedprodcohomology}
Let ${\mathcal D}$ be a unital algebra on which a group $G$ acts 
and ${\mathcal A}={\mathcal D}\rtimes G$ be the algebraic crossed
product, together with the augmentation map $\omega\colon {\mathcal A}\to{\mathcal D}$ given by
\[
\omega(\sum_{g\in G} a_g g ) = \sum_{g\in G} a_g.
\]
Then, for every left ${\mathcal A}$-module ${\mathcal M}$, the homomorphisms
$\iota_*\colon H^n({\mathcal A},{\mathcal M}) \to H^n(G,{\mathcal M})$
induced by the inclusion $\iota\colon{\mathbb C}[G]\hookrightarrow{\mathcal A}$ are isomorphisms.
\end{proposition}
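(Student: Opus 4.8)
The plan is to prove this by constructing an explicit chain homotopy equivalence between the two relative bar-type resolutions computing $H^n(\mathcal{A},\mathcal{M})$ and $H^n(G,\mathcal{M})$, realized at the level of the cochain complexes themselves. The key structural fact to exploit is that $\mathcal{A} = \mathcal{D} \rtimes G$ decomposes as a free left $\mathcal{D}$-module $\mathcal{A} = \bigoplus_{g \in G} \mathcal{D} g$, and that the augmentation $\omega$ collapses the $g$-coordinate. First I would identify the left ideal $\mathcal{L} = \ker \omega \subset \mathcal{A}$ explicitly: it is spanned as a $\mathcal{D}$-module by the elements $(g - 1)$ for $g \in G$ (more precisely $\mathcal{L} = \bigoplus_{g} \mathcal{D}(g-1)$), which is the crossed-product analogue of the augmentation ideal of $\mathbb{C}[G]$. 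This identification is what lets the group-cohomology complex sit inside the $\mathcal{A}$-cohomology complex.

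The central computation is to compare the two complexes degree by degree. On the group side, the standard (inhomogeneous bar) resolution gives cochains $C^n(G,\mathcal{M}) = \mathrm{Map}(G^n,\mathcal{M})$; on the $\mathcal{A}$ side, the relative complex in degree $n$ is $\mathrm{Hom}_{\mathcal{D}}(\mathcal{A}^{\otimes_{\mathcal{D}}(n-1)} \otimes_{\mathcal{D}} \mathcal{L}, \mathcal{M})$. Because $\mathcal{A}$ is free over $\mathcal{D}$ on the basis $\{g\}_{g \in G}$ and $\mathcal{L}$ is free over $\mathcal{D}$ on $\{g-1\}_{g \neq 1}$, each relative $\mathrm{Hom}_{\mathcal{D}}$ space is canonically a space of functions on a product of copies of $G$, exactly matching $C^n(G,\mathcal{M})$ after a change of variables. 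The plan is therefore to write down $\iota_*$ as restriction of $\mathcal{D}$-module maps to the sub-$G$-module generated by the group elements, and to write down an explicit inverse $\kappa$ using the $\mathcal{D}$-freeness to extend any $G$-cochain $\mathcal{D}$-linearly. I would verify that $\iota_*$ and $\kappa$ are mutually inverse chain maps by a direct (but routine) check that they respect the differentials $d_n$, using the compatibility $\omega(ab) = \omega(a\omega(b))$ and the $\mathcal{D}$-linearity constraint in $\mathrm{Hom}_{\mathcal{D}}$.

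The main obstacle I expect is \emph{bookkeeping the $\mathcal{D}$-module structure correctly}, specifically ensuring that the tensor products are over $\mathcal{D}$ (not over $\mathbb{C}$) and that the $G$-action on $\mathcal{D}$ interacts properly with the differential. In the crossed product one has $g \cdot d = (g \triangleright d) \cdot g$, so when I move group elements past $\mathcal{D}$-coefficients inside a cocycle the $G$-action on $\mathcal{D}$ twists things; I must confirm that the isomorphism $\mathcal{A} \otimes_{\mathcal{D}} \cdots \otimes_{\mathcal{D}} \mathcal{L} \cong \bigoplus \mathcal{D} \cdot (\text{group words})$ is genuinely $\mathcal{D}$-balanced, i.e.\ that pulling a $\mathcal{D}$-coefficient across a tensor symbol matches the twisted multiplication. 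The cleanest way to dispatch this is to observe that the relative cochain complex for $(\mathcal{A},\mathcal{D},\mathcal{L})$ is by construction the complex computing relative Ext, and that the free-module decomposition $\mathcal{A} = \mathcal{D} \otimes_{\mathbb{C}} \mathbb{C}[G]$ as $(\mathcal{D},\mathbb{C}[G])$-bimodules identifies the relative bar resolution of $\mathcal{A}$ over $\mathcal{D}$ with $\mathcal{D} \otimes_{\mathbb{C}}$ (the bar resolution of $\mathbb{C}[G]$ over $\mathbb{C}$); applying $\mathrm{Hom}_{\mathcal{D}}(-,\mathcal{M})$ and using the adjunction $\mathrm{Hom}_{\mathcal{D}}(\mathcal{D} \otimes_{\mathbb{C}} V, \mathcal{M}) \cong \mathrm{Hom}_{\mathbb{C}}(V,\mathcal{M})$ then yields the group-cohomology complex on the nose, so that $\iota_*$ is an isomorphism of complexes rather than merely a quasi-isomorphism. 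I would present the argument in this adjunction form to minimize the index-chasing, relegating the verification that $\iota$ induces the claimed map in each degree to a short remark.
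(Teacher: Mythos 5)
Your proposal is correct and takes essentially the same route as the paper: both arguments rest on the observation that $G$ (resp.\ $\{g-1_G : g\in G\setminus\{1_G\}\}$) is a basis for ${\mathcal A}$ (resp.\ ${\mathcal L}=\ker\omega$) as a left ${\mathcal D}$-module, and both identify the relative cochain complex for ${\mathcal A}$ with that for ${\mathbb C}[G]$ via the ${\mathcal D}$-module isomorphism ${\mathcal D}\otimes_{{\mathbb C}}{\mathbb C}[G]^{\otimes_{{\mathbb C}}(n-1)}\otimes_{{\mathbb C}}{\mathcal L}[G]\cong{\mathcal A}\otimes_{{\mathcal D}}\cdots\otimes_{{\mathcal D}}{\mathcal A}\otimes_{{\mathcal D}}{\mathcal L}$, which is exactly your adjunction step $\mathrm{Hom}_{{\mathcal D}}({\mathcal D}\otimes_{{\mathbb C}}V,{\mathcal M})\cong\mathrm{Hom}_{{\mathbb C}}(V,{\mathcal M})$. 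The paper then concludes, as you do, that every $n$-cocycle for ${\mathbb C}[G]$ extends uniquely to an $n$-cocycle for ${\mathcal A}$, giving the inverse of $\iota_*$ at the level of cochains.
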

\begin{proof}
We note that $G$ (resp.\ $\{ g-1_G : g\in G\setminus\{1_G\}\}$) is a basis for the 
left ${\mathcal D}$-module ${\mathcal A}$ (resp.\ ${\mathcal L}:=\ker\omega$).
Also note that ${\mathcal L}[G]:=\ker\omega\cap{\mathbb C}[G]=\mathop{\mathrm{span}}_{{\mathbb C}}\{ g-1_G \}$.
It follows that the linear map
\[
{\mathcal D}\otimes_{{\mathbb C}}\overbrace{{\mathbb C}[G]\otimes_{{\mathbb C}}\cdots\otimes_{{\mathbb C}}{\mathbb C}[G]}^{n-1}\otimes_{{\mathbb C}}{\mathcal L}[G]
\to\overbrace{{\mathcal A}\otimes_{{\mathcal D}}\cdots\otimes_{{\mathcal D}}{\mathcal A}}^{n-1}\otimes_{{\mathcal D}}{\mathcal L}
\]
given by 
$a\otimes\xi_1\otimes\cdots\otimes\xi_n\mapsto (a\xi_1)\otimes\xi_2\otimes\cdots\otimes\xi_n$ 
is a ${\mathcal D}$-module isomorphism.
Hence, every $n$-cocycle $\theta$ for ${\mathbb C}[G]$ uniquely extends to an $n$-cocycle for ${\mathcal A}$.
This gives rise to the inverse of $\iota_*$ and so $\iota_*$ is an isomorphism. 
\end{proof}

\begin{corollary}\label{cor:ci1}
Let $G$ be a group and $X=\mathrm{Cay}(G)$. 
Then, for every left ${\mathbb C}_{\mathrm u}[X]$-module ${\mathcal M}$, there
are canonical isomorphisms $H^n(X,{\mathcal M})\cong H^n(G,{\mathcal M})$.
\end{corollary}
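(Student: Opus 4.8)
The plan is to deduce Corollary~\ref{cor:ci1} directly from Proposition~\ref{proposition:crossedprodcohomology} via the structure theorem for the algebraic uniform Roe algebra recorded in Lemma~\ref{lemma:algroestr}. That lemma tells us that when $X=\mathrm{Cay}(G)$, we have a canonical isomorphism ${\mathbb C}_{\mathrm u}[X]\cong \ell_\infty(G)\rtimes G$ of $*$-algebras, where $G$ acts on $\ell_\infty(G)$ by the (left) translation action. So first I would make this identification explicit, setting ${\mathcal D}=\ell_\infty(G)$ and ${\mathcal A}={\mathcal D}\rtimes G={\mathbb C}_{\mathrm u}[X]$.

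The one point that needs care is matching up the augmentation maps. Proposition~\ref{proposition:crossedprodcohomology} uses the crossed-product augmentation $\omega(\sum_g a_g g)=\sum_g a_g$, whereas in Section~\ref{section:cohomologicalT} the cohomology $H^n(X,{\mathcal M})$ is defined using the uniform Roe augmentation $\omega(a)(x)=\sum_y a_{x,y}$. So the key step is to verify that under the isomorphism ${\mathbb C}_{\mathrm u}[X]\cong\ell_\infty(G)\rtimes G$ these two augmentation maps coincide. Writing a general element of ${\mathbb C}_{\mathrm u}[\mathrm{Cay}(G)]$ in its crossed-product form $a=\sum_g \xi_g g$ with $\xi_g\in\ell_\infty(G)$, one computes the matrix entries $a_{x,y}$ in terms of the $\xi_g$ and checks that the row-sum $\sum_y a_{x,y}$ equals the value at $x$ of $\sum_g \xi_g$, which is precisely the crossed-product augmentation evaluated on $a$. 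This is a short bookkeeping computation about how the right-invariant word metric indexes matrix entries against group elements, and I would present it briefly rather than in full.

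Once the augmentations agree, both ${\mathcal D}$-algebra structures with their distinguished left ideals ${\mathcal L}=\ker\omega$ literally coincide, so the cohomology $H^n(X,{\mathcal M})$ defined for the Roe algebra is the same as the crossed-product cohomology $H^n({\mathcal A},{\mathcal M})$ appearing in Proposition~\ref{proposition:crossedprodcohomology}. Applying that proposition gives isomorphisms $\iota_*\colon H^n({\mathcal A},{\mathcal M})\to H^n(G,{\mathcal M})$ induced by the inclusion ${\mathbb C}[G]\hookrightarrow{\mathcal A}$, and translating back through the identification ${\mathcal A}={\mathbb C}_{\mathrm u}[X]$ yields the desired canonical isomorphisms $H^n(X,{\mathcal M})\cong H^n(G,{\mathcal M})$.

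I do not expect a genuine obstacle here; the corollary is essentially a specialization, and the only real content is the identification of the two augmentation maps, which is where I would concentrate the verification. The naturality (``canonical'') claim is automatic since all the isomorphisms involved—the Roe/crossed-product identification and the map $\iota_*$—are canonical in ${\mathcal M}$.
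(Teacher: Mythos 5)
Your proposal is correct and is essentially the paper's own proof: the paper likewise deduces the corollary by applying Proposition~\ref{proposition:crossedprodcohomology} to the identification ${\mathbb C}_{\mathrm u}[X]\cong\ell_\infty(X)\rtimes G$ from Lemma~\ref{lemma:algroestr}. Your explicit check that the Roe augmentation $\omega(a)(x)=\sum_y a_{x,y}$ matches the crossed-product augmentation under this identification is exactly the (routine) verification the paper leaves implicit.
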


\begin{proof}
Apply Proposition~\ref{proposition:crossedprodcohomology} to ${\mathbb C}_{\mathrm u}[X]\cong\ell_\infty(X)\rtimes G$.
\end{proof}

Here we recover a result of Pansu \cite{Pansu} that vanishing of $L^2$-Betti numbers 
is a coarse invariant. 

\begin{corollary}\label{cor:ci2}
Let $G_1$ and $G_2$ be groups which are coarsely equivalent. 
Then, for every $n$ one has $\beta_n(G_1)=0\Leftrightarrow\beta_n(G_2)=0$. 
Here $\beta_n(G)$ denotes the $n$-th $L^2$-Betti number of $G$. 
\end{corollary}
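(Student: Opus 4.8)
The plan is to reduce the statement to the coarse invariance of \emph{reduced} $\ell^2$-cohomology and then to extract that invariance from Corollary~\ref{cor:ci1} together with the structure theorem for coarse equivalences (Lemma~\ref{lemma:coarseequivstr}). For a uniformly locally finite metric space $X$ I regard $\ell_2(X)$ as a left ${\mathbb C}_{\mathrm u}[X]$-module in the tautological way (matrices act on $\ell_2(X)$), so that $H^n(X,\ell_2(X))$ is defined. When $X=\mathrm{Cay}(G)$ one has $\ell_2(X)=\ell_2(G)$, and restricting the module structure along the embedding ${\mathbb C}[G]\hookrightarrow{\mathbb C}_{\mathrm u}[X]$, $\xi\mapsto[\xi(gh^{-1})]_{g,h}$, recovers precisely the left regular representation. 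Hence Corollary~\ref{cor:ci1} identifies $H^n(X,\ell_2(X))$ with the ordinary $\ell^2$-cohomology $H^n(G,\ell_2(G))$ of $G$, and the same identification respects the Hilbert-space topology on cochains, so it descends to reduced cohomology.

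Next I would recall the analytic input. Write $\bar H^n(G,\ell_2(G))$ for the reduced $\ell^2$-cohomology, the quotient of $\ker d_{n+1}$ by the closure of $\mathop{\mathrm{ran}} d_n$ inside the Hilbert space of $\ell^2$-cochains. For $G$ of type $\mathrm{F}_\infty$ (the general case being absorbed into the Cheeger--Gromov definition) this is a Hilbert $\mathcal N(G)$-module and $\beta_n(G)=\dim_{\mathcal N(G)}\bar H^n(G,\ell_2(G))$. Since the von Neumann dimension is \emph{faithful} on Hilbert $\mathcal N(G)$-modules, one obtains the crucial equivalence
\[
\beta_n(G)=0\quad\Longleftrightarrow\quad \bar H^n(G,\ell_2(G))=0 .
\]
Thus $\beta_n(G)=0$ becomes the purely topological assertion that $\mathop{\mathrm{ran}} d_n$ is dense in $\ker d_{n+1}$. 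The virtue of passing to the reduced theory is that this vanishing may be transported across $G_1$ and $G_2$ by any isomorphism of the reduced cohomologies, \emph{without} ever comparing the two dimension functions $\dim_{\mathcal N(G_1)}$ and $\dim_{\mathcal N(G_2)}$, which have no coarse meaning.

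It remains to prove that $\bar H^n(\mathrm{Cay}(G_1),\ell_2(\mathrm{Cay}(G_1)))$ vanishes iff $\bar H^n(\mathrm{Cay}(G_2),\ell_2(\mathrm{Cay}(G_2)))$ does, and for this I would show that $\bar H^n(X,\ell_2(X))$ is a coarse invariant. By Lemma~\ref{lemma:coarseequivstr} a coarse equivalence $f\colon X_1\to X_2$ factors through an injective uniformly bornologous map $\tilde f\colon X_1\hookrightarrow Y:=X_2\times\mathbf n$ with coarsely dense image. I would argue in two steps. First, an amplification step: under the canonical identifications ${\mathbb C}_{\mathrm u}[Y]\cong M_n({\mathbb C}_{\mathrm u}[X_2])$ and $\ell_2(Y)\cong\ell_2(X_2)\otimes{\mathbb C}^n$, the cochain complex computing $H^\bullet(Y,\ell_2(Y))$ is the $n\times n$ matrix amplification of the one for $X_2$, so the (reduced) cohomologies coincide. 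Second, a full-corner step: the characteristic function $p=\chi_{\tilde f(X_1)}\in\ell_\infty(Y)\subset{\mathbb C}_{\mathrm u}[Y]$ is a full projection, that is ${\mathbb C}_{\mathrm u}[Y]\,p\,{\mathbb C}_{\mathrm u}[Y]={\mathbb C}_{\mathrm u}[Y]$, because coarse density lets one cover $Y$ by finitely many partial translations with domains in $\tilde f(X_1)$ (bounded geometry). Since $\tilde f$ is a bijective coarse equivalence onto its image, $p\,{\mathbb C}_{\mathrm u}[Y]\,p\cong{\mathbb C}_{\mathrm u}[X_1]$ acting on $p\ell_2(Y)=\ell_2(X_1)$, and passing to a full corner does not change the (reduced) cohomology. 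Composing the two steps with the identification of the first paragraph yields the corollary.

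The main obstacle I anticipate is the full-corner step, i.e.\ proving that $\bar H^n(-,\ell_2(-))$ is unchanged on passing to a full corner of ${\mathbb C}_{\mathrm u}[Y]$. One must write down explicit chain maps between the two cochain complexes, together with a chain homotopy whose existence uses the fullness of $p$ to split $1$ over the corner, while keeping all propagations uniformly bounded; and one must verify that these maps are continuous for the $\ell^2$-topology so that they descend to reduced cohomology. By comparison, the amplification step, the reduced/unreduced bookkeeping, and the faithfulness of $\mathcal N(G)$-dimension are routine or standard.
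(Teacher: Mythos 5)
Your first two steps (the reduction to vanishing of reduced cohomology via faithfulness of the von Neumann dimension, and the identification $H^n(\mathrm{Cay}(G),\ell_2(G))\cong H^n(G,\ell_2(G))$ via Corollary~\ref{cor:ci1}) agree with the paper. The gap is in the transport step. You propose to move vanishing across the coarse equivalence by an amplification and a full-corner (Morita) argument, and you yourself flag the full-corner step as the main obstacle --- but that step is precisely where all the difficulty of the corollary lives, and it is not carried out. The cohomology here is not Hochschild cohomology but a relative Ext-group attached to the augmentation data $(\omega,{\mathcal D},{\mathcal L})$, and invariance of such a theory (together with the topology on cochains, which you need for the \emph{reduced} groups) under passage to a full corner is not a standard fact; it would require explicit chain maps and homotopies compatible with the augmentation ideals and with bounded propagation, none of which are constructed. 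Even your ``routine'' amplification step is not routine: ${\mathcal L}_{\mathrm u}[X\times\mathbf{n}]$ is the kernel of $\omega_{\mathbf n}\otimes\omega$, not ${\mathbb M}_n({\mathbb C})\otimes{\mathcal L}_{\mathrm u}[X]$, so the relative cochain complex of $X\times\mathbf{n}$ is \emph{not} the $n\times n$ amplification of that of $X$; the paper itself, when proving coarse invariance of cohomological property $(\mathrm{T})$, has to give a bespoke degree-$1$ argument for exactly this point rather than invoke an amplification principle.

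The paper avoids the Morita question entirely by a case split. If one of the groups is amenable, both are (coarse invariance of amenability, Proposition 3.3.5 in \cite{RoeLectureNote}), and then both have vanishing $L^2$-Betti numbers in all degrees by the Cheeger--Gromov theorem, so there is nothing to prove. If both are non-amenable, Whyte's theorem \cite{Whyte} upgrades the coarse equivalence to a \emph{bijective} one, so that one may take $X:=\mathrm{Cay}(G_1)=\mathrm{Cay}(G_2)$ as one and the same uniformly locally finite space; then Corollary~\ref{cor:ci1} applied twice gives a topology-preserving isomorphism $H^n(G_1,\ell_2(X))\cong H^n(X,\ell_2(X))\cong H^n(G_2,\ell_2(X))$, and the vanishing criterion finishes the proof. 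The paper's parenthetical remark that bijectivity can fail for amenable groups \cite{Dymarz} (where only Morita equivalence of the Roe algebras survives) shows why the case split is essential, and also why your route would force you to prove the corner-invariance statement in exactly the amenable case that the paper disposes of by Cheeger--Gromov. To repair your proof you would either have to establish that invariance, or import the same two external inputs (Whyte and Cheeger--Gromov) that the paper uses.
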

\begin{proof}
If one of $G_1$ and $G_2$ is amenable, then both are amenable (Proposition 3.3.5 in \cite{RoeLectureNote}) 
and have zero $L^2$-Betti numbers by the Cheeger--Gromov theorem (Theorem 6.3.7 in \cite{Lueck}). 
So, we assume that $G_1$ and $G_2$ are not amenable. 
By \cite{Whyte}, there is a bijective coarse equivalence 
between $G_1$ and $G_2$. 
(This may not be true when the groups are amenable \cite{Dymarz}, 
still the algebraic uniform Roe algebras of them are Morita equivalent.) 
Thus, for $X:=\mathrm{Cay}(G_1)=\mathrm{Cay}(G_2)$, 
Corollary \ref{cor:ci1} yields a canonical isomorphism $H^n(G_1,\ell_2(X))\cong H^n(G_2,\ell_2(X))$ 
that preserves the topologies also. Here for a topological $G$-module ${\mathcal M}$, 
the topology on $H^n(G,{\mathcal M})$ is induced from the pointwise convergence topology on 
the space of cochains. 
Since $\beta_n(G)=0$ if and only if $H^n(G,\ell_2(G))=\overline{\{0\}}$, we are done. 
\end{proof}

\begin{definition}
Let $X$ be a uniformly locally finite metric space. 
We say $X$ has \emph{cohomological property} (T) if $H^1(X,{\mathcal H})=0$ for every
$*$-representation of the algebraic uniform Roe algebra ${\mathbb C}_{\mathrm u}[X]$ 
on a Hilbert space ${\mathcal H}$.
\end{definition}

We denote the augmentation left ideal 
$\ker(\omega\colon{\mathbb C}_{\mathrm u}[X]\to\ell_\infty(X))$ 
by ${\mathcal L}_{\mathrm{u}}[X]$. 
Suppose that a $1$-cocycle $\theta\colon {\mathcal L}_{\mathrm{u}}[X]\to{\mathcal H}$ 
is given. Then, $b(t):=\theta(t-\omega(t))$ is a $1$-cocycle for the pseudo-group 
of partial translations $t$ in ${\mathbb C}_{\mathrm u}[X]$. Namely, it satisfies 
the cocycle identity 
$b(st)=b(s|_{ \mathop{\mathrm{ran}}(t)}) + \pi(s|_{ \mathop{\mathrm{ran}}(t)}) b(t)$ 
for $s$ and $t$. 
The following criterion of $1$-coboundaries is handy, as in the case of group $1$-cocycles.

\begin{lemma}\label{lemma:boundedcocycle}
A $1$-cocycle $\theta\colon {\mathcal L}_{\mathrm{u}}[X] \to {\mathcal H}$ is a $1$-coboundary 
if and only if $\sup_t\|\theta(t-\omega(t))\|<\infty$ where the supremum is taken all over 
the partial $($resp.\ full$)$ translations $t$.
\end{lemma}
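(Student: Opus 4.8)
The plan is to mimic the classical argument for group $1$-cocycles, where a cocycle is a coboundary precisely when the associated affine action has a fixed point, and boundedness of the cocycle gives a bounded orbit from which a fixed point (a circumcenter) is extracted. The forward direction is trivial: if $\theta(t-\omega(t)) = (t-\omega(t))v = \pi(t-\omega(t))v$ for a fixed $v\in{\mathcal H}$, then $\|\theta(t-\omega(t))\| \le \|\pi(t-\omega(t))\|\,\|v\| \le 2\|v\|$ since $t$ and $\omega(t)$ each have operator norm at most $1$ on any $*$-representation; so the supremum is finite. The substance is the converse, so I would assume $M:=\sup_t\|b(t)\|<\infty$, where $b(t)=\theta(t-\omega(t))$ runs over (full) translations, and produce a vector $v\in{\mathcal H}$ witnessing the coboundary condition.

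First I would record the affine-action reformulation. For a full translation $t$ (an element of the group $\Gamma_X$ of Lemma~\ref{lemma:algroestr}), the cocycle identity $b(st)=b(s)+\pi(s)b(t)$ says that $A_t\xi := \pi(t)\xi + b(t)$ defines an \emph{affine} isometric action of $\Gamma_X$ on ${\mathcal H}$, with linear part the unitary representation $\pi|_{\Gamma_X}$. The hypothesis $\sup_t\|b(t)\|<\infty$ says exactly that the orbit $\{A_t\,0 : t\in\Gamma_X\}=\{b(t)\}$ is bounded. The plan is then to take $v$ to be the (negative of the) unique circumcenter of the closed convex hull of this bounded orbit: since ${\mathcal H}$ is a Hilbert space, a bounded set has a unique point of smallest enclosing ball (Chebyshev center), and this center is fixed by the affine isometric action by uniqueness. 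Fixedness $A_t(-v)=-v$ for all full $t$ unwinds to $b(t)=(\pi(t)-1)(-v)=\pi(t)v - v$, i.e.\ $\theta(t-\omega(t)) = (t-\omega(t))v$.

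The main obstacle, and the step requiring care, is passing from this identity \emph{on full translations} to the genuine coboundary identity $\theta(a)=av$ for \emph{every} $a\in{\mathcal L}_{\mathrm u}[X]$, which is what ``$\theta$ is a $1$-coboundary'' means at the level of the module map on the whole augmentation ideal. Here I would use the structural Lemma~\ref{lemma:algroestr}, which gives ${\mathbb C}_{\mathrm u}[X]=\mathrm{span}(\ell_\infty(X)\Gamma_X)$, so that ${\mathcal L}_{\mathrm u}[X]$ is spanned by elements of the form $f(t-\omega(t))$ with $f\in\ell_\infty(X)$ and $t$ a full translation (after subtracting the diagonal part $\omega$). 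Since $\theta$ is an ${\mathbb C}_{\mathrm u}[X]$-module map, $\theta(f(t-\omega(t))) = \pi(f)\theta(t-\omega(t)) = \pi(f)(t-\omega(t))v = f(t-\omega(t))v$, and linearity then propagates the identity $\theta(a)=av$ to all of ${\mathcal L}_{\mathrm u}[X]$; this also reconciles the ``full'' and ``partial'' versions of the supremum, since every partial translation extends to a full one off its support (again by the argument in Lemma~\ref{lemma:algroestr}), so the two suprema are comparable and finiteness of one gives finiteness of the other. The remaining routine point is to check that the circumcenter lies in ${\mathcal H}$ rather than some completion, which is automatic because ${\mathcal H}$ is already complete and the orbit is a bounded subset of ${\mathcal H}$.
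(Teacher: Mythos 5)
Your proposal is correct and follows essentially the same route as the paper: restrict the cocycle to the group $\Gamma_X$ of full translations, invoke the standard fact that a bounded group $1$-cocycle is a coboundary (the paper simply cites Proposition 2.2.9 of \cite{BookBekkadelaHarpeValette}, whose proof is exactly your Chebyshev-center argument), and then extend to all of ${\mathcal L}_{\mathrm u}[X]$ via ${\mathcal L}_{\mathrm u}[X]=\mathrm{span}\bigl(\ell_\infty(X)\{t-1 : t\in\Gamma_X\}\bigr)$ from Lemma~\ref{lemma:algroestr} together with the module-map property. One harmless inaccuracy: a partial translation need not \emph{extend} to a full one (consider the unilateral shift on ${\mathbb N}$); rather, by the argument of Lemma~\ref{lemma:algroestr} it decomposes into at most three pieces each of which extends, but your proof does not actually need this side remark, since the equivalence of the full and partial versions already follows from the chain coboundary $\Rightarrow$ bounded on partials $\Rightarrow$ bounded on fulls $\Rightarrow$ coboundary.
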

\begin{proof}
We only have to prove the `if' part. Let $\Gamma_X$ be the group of full translations 
in ${\mathbb C}_{\mathrm u}[X]$.
Since the cocycle $\theta$ is bounded, there is $v\in{\mathcal H}$ such that 
$\theta(t-1)=\pi(t)v-v$ for all $t\in\Gamma_X$, 
by Proposition 2.2.9 in \cite{BookBekkadelaHarpeValette}.
But since ${\mathcal L}_{\mathrm{u}}[X]=\mathrm{span}(\ell_\infty(X)\{ t-1 : t\in\Gamma_X\})$ 
(see Lemma~\ref{lemma:algroestr}),
one has $\theta(a)=\pi(a)v$ for all $a\in{\mathcal L}_{\mathrm{u}}[X]$.
\end{proof}

It is proved in \cite{WillettYu13} that geometric property $(\mathrm{T})$ is a coarse invariant. 
The same is true for the cohomological property $(\mathrm{T})$. 
\begin{theorem}
Let $X$ and $Y$ be uniformly locally finite metric spaces which are coarsely equivalent.
Then, $X$ has cohomological property $(\mathrm{T})$ if and only if $Y$ has it.
\end{theorem}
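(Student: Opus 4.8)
The plan is to prove coarse invariance of cohomological property (T) by reducing the general coarse equivalence to the injective model furnished by Lemma~\ref{lemma:coarseequivstr}, and then transporting $1$-cocycles along the induced algebra isomorphism. First I would observe that, by Lemma~\ref{lemma:coarseequivstr}, a coarse equivalence $f\colon X\to Y$ between uniformly locally finite spaces can be upgraded to an injective uniformly bornologous map $\tilde f\colon X\to Y\times\mathbf{n}$ with $f=\mathrm{pr}\circ\tilde f$. Since $\mathbf n$ is coarsely equivalent to a point, $Y\times\mathbf n$ is coarsely equivalent to $Y$, and one checks directly that ${\mathbb C}_{\mathrm u}[Y\times\mathbf n]\cong M_n({\mathbb C}_{\mathrm u}[Y])$ as $*$-algebras, with the augmentation map intertwined. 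So it suffices to handle two cases separately: (a) $X$ embeds injectively and coarsely into $Y$ as a subspace, and (b) passage between $Y$ and $Y\times\mathbf n$, i.e.\ between ${\mathbb C}_{\mathrm u}[Y]$ and $M_n({\mathbb C}_{\mathrm u}[Y])$.

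For case (b), I would note that $M_n({\mathbb C}_{\mathrm u}[Y])$ and ${\mathbb C}_{\mathrm u}[Y]$ are Morita equivalent via the corner $e_{11}$, and that every $*$-representation of one functorially produces a compatible representation of the other on an amplified or compressed Hilbert space. The key point is that this Morita bimodule carries the augmentation structure, so the cochain complexes computing $H^1$ are naturally quasi-isomorphic; concretely, a $1$-cocycle $\theta\colon{\mathcal L}_{\mathrm u}[Y\times\mathbf n]\to{\mathcal H}$ restricts and extends compatibly, and Lemma~\ref{lemma:boundedcocycle} reduces the coboundary question to boundedness on full translations, which is manifestly preserved under the matrix amplification since full translations in $M_n({\mathbb C}_{\mathrm u}[Y])$ are built from those of ${\mathbb C}_{\mathrm u}[Y]$ together with the finite permutation group of the $\mathbf n$-coordinate.

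For case (a), where $X$ sits coarsely inside $Y$, the heart of the matter is that the inclusion induces a (nonunital) embedding ${\mathbb C}_{\mathrm u}[X]\hookrightarrow{\mathbb C}_{\mathrm u}[Y]$ compatible with augmentation, and that the closeness of $f\circ g$ and $g\circ f$ to the identities means the two augmentation ideals ${\mathcal L}_{\mathrm u}[X]$ and ${\mathcal L}_{\mathrm u}[Y]$ generate each other up to finite propagation. Given a $*$-representation $\pi$ of ${\mathbb C}_{\mathrm u}[Y]$ and a $1$-cocycle on ${\mathcal L}_{\mathrm u}[Y]$, I would restrict to ${\mathcal L}_{\mathrm u}[X]$, invoke cohomological property (T) of $X$ to get a vector implementing it there, and then use the cocycle identity together with boundedness (Lemma~\ref{lemma:boundedcocycle}) to propagate this vector to all full translations of $Y$. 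The surjectivity side of the coarse equivalence guarantees that the partial translations of $Y$ are covered, up to bounded propagation, by images of those of $X$, so the supremum $\sup_t\|\theta(t-\omega(t))\|$ stays finite over $\Gamma_Y$.

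The hard part will be case (a): unlike the clean Morita picture of case (b), the inclusion ${\mathbb C}_{\mathrm u}[X]\hookrightarrow{\mathbb C}_{\mathrm u}[Y]$ is only corner-like up to the finite ambiguity coming from $\mathbf n$, and one must verify that a vector implementing the cocycle on the full translations of $X$ actually controls the full translations of $Y$ rather than merely those supported on the image of $X$. I expect this to require a careful bookkeeping argument showing that every full translation of $Y$ differs, modulo ${\mathcal L}_{\mathrm u}[X]$-coboundary terms and bounded-propagation corrections, from an element in the subalgebra generated by the image, so that Lemma~\ref{lemma:boundedcocycle} applies on all of $\Gamma_Y$. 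Once boundedness on $\Gamma_Y$ is established, the cocycle is a coboundary and $H^1(Y,{\mathcal H})=0$, completing the symmetric argument.
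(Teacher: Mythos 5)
Your proposal follows essentially the same route as the paper's proof: the same reduction via Lemma~\ref{lemma:coarseequivstr} into a coarsely dense inclusion plus a $Y$-versus-$Y\times\mathbf{n}$ step, the same treatment of the inclusion case (restrict the cocycle, get boundedness on partial translations of $X$ from cohomological property $(\mathrm{T})$ of $X$, propagate it to partial translations of $Y$ by the cocycle identity and the finite covering of $Y$ by partial translations with domains in $X$, then apply Lemma~\ref{lemma:boundedcocycle}), and a matrix amplification/compression argument for the $\mathbf{n}$-step, which the paper carries out explicitly via the row-sum cocycle $\theta_n\bigl(\sum_{i,j} e_{ij}\otimes a_{i,j}\bigr)=\bigl(\theta(\sum_j a_{i,j})\bigr)_{i=1}^n$. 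One inaccuracy worth noting: full translations of $Y\times\mathbf{n}$ are \emph{not} generated by $\Gamma_Y$ together with permutations of the $\mathbf{n}$-coordinate (a full translation may permute fibers in a $y$-dependent way), but this claim is inessential, since the only direction of the matrix step your architecture actually needs --- cohomological property $(\mathrm{T})$ of $Y\times\mathbf{n}$ implies that of $Y$ --- requires no such generation statement: one amplifies the cocycle, invokes property $(\mathrm{T})$ upstairs, and compresses the implementing vector, exactly as in the paper.
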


\begin{proof}
We first prove that if an inclusion $X\hookrightarrow Y$ is a coarse equivalence 
and $X$ has cohomological property $(\mathrm{T})$, then so does $Y$. 
Since $Y$ is uniformly locally finite, there is a finite family $\{ f_i\}$ of  
partial translations such that $\mathop{\mathrm{dom}}(f_i)\subset X$ and 
$\bigcup_i \mathop{\mathrm{ran}}(f_i)=Y$.
It is not too difficult to see that every partial translation on $Y$ is 
a concatenation of partial translations of the form $f_j\circ s_{ij} \circ f_i^{-1}$ 
where $s_{ij}$'s are partial translations on $X$.
Now, suppose that a $1$-cocycle 
$\theta\colon {\mathcal L}_{\mathrm{u}}[Y]\to {\mathcal H}$ is given. 
By cohomological property $(\mathrm{T})$ of $X$, the pseudo-group cocycle 
$b(s):=\theta(s-\omega(s))$ is uniformly bounded on partial translations $s$ of $X$.
By the cocycle identity, $b(f_j\circ s \circ f_i^{-1})$ is also uniformly bounded 
for every $i$ and $j$. It follows that $b$ is bounded on partial translations of $Y$, 
and by Lemma~\ref{lemma:boundedcocycle} it is a $1$-coboundary. 
This proves that $Y$ has cohomological property $(\mathrm{T})$.

In view of Lemma~\ref{lemma:coarseequivstr}, it remains show that 
cohomological property $(\mathrm{T})$ of $X\times\mathbf{n}$ implies that of $X$. 
We identify ${\mathbb C}_{\mathrm u}[\mathbf{n}]$ with ${\mathbb M}_n({\mathbb C})$ and 
${\mathbb C}_{\mathrm u}[X\times\mathbf{n}]$ 
with ${\mathbb M}_n({\mathbb C})\otimes{\mathbb C}_{\mathrm u}[X]$ via 
$[a_{(x,i),(y,j)}]_{(x,i),(y,j)}\leftrightarrow\sum_{i,j} e_{ij}\otimes [a_{(x,i),(y,j)}]_{x,y}$. 
Then, the augmentation map $\omega_{X\times\mathbf{n}}$ for $X\times\mathbf{n}$ is of the form 
$\omega_{X\times\mathbf{n}}=\omega_\mathbf{n}\otimes\omega$ where 
$\omega_\mathbf{n}([a_{i,j}]_{i,j})=[\delta_{i,j}\sum_k a_{i,k}]_{i,j}$ on ${\mathbb M}_n({\mathbb C})$. 
It follows that $\sum_{i,j} e_{ij}\otimes a_{i,j} \in {\mathcal L}_{\mathrm u}[X\times\mathbf{n}]$ 
implies $\sum_j a_{i,j}\in{\mathcal L}_{\mathrm u}[X]$ for every $i$.
Let a $*$-rep\-re\-sen\-ta\-tion 
$\pi\colon {\mathbb C}_{\mathrm u}[X]\to{\mathbb B}({\mathcal H})$ 
and a $1$-cocycle $\theta\colon {\mathcal L}_{\mathrm u}[X]\to{\mathcal H}$ be given. 
Then, we define a $1$-cocycle 
$\theta_n\colon {\mathcal L}_{\mathrm u}[X\times\mathbf{n}]\to{\mathcal H}^{\oplus n}$ 
for the $*$-rep\-re\-sen\-ta\-tion 
$\pi_n=\mathrm{id}\otimes\pi$ of ${\mathbb M}_n({\mathbb C})\otimes{\mathbb C}_{\mathrm u}[X]$ on 
${\mathcal H}^{\oplus n}$ by 
$\theta_n(\sum_{i,j} e_{ij}\otimes a_{i,j})=(\theta(\sum_j a_{i,j}))_{i=1}^n$.
It is indeed a $1$-cocycle and hence there is $\tilde{v}=(v_i)_{i=1}^n\in{\mathcal H}^{\oplus n}$ 
such that $\theta_n(\tilde{a})=\pi_n(\tilde{a})\tilde{v}$ 
for all $\tilde{a}\in {\mathcal L}_{\mathrm u}[X\times\mathbf{n}]$.
It follows that $\theta(a)=\pi(a)v_1$ for all $a\in {\mathcal L}_{\mathrm u}[X]$, 
and $\theta$ is a $1$-coboundary.
\end{proof}

The following finishes the proof of Theorem~\ref{theorem:main}.

\begin{theorem}
For a uniformly locally finite metric space $X$, one has the following. 
\begin{itemize}
\item Cohomological property $(\mathrm{T})$ implies geometric property $(\mathrm{T})$.
\item If the coarse structure of $X$ is induced by a property $(\mathrm{T})$ group $G$ 
$($i.e., $G=\langle S\rangle$ acts on $X$ in such a way that 
$\{ (x,gx) : x\in X,\, g\in S\}$ is a generating controlled subset$)$, 
then $X$ has cohomological property $(\mathrm{T})$.
\item In case $X$ is a disjoint union of the Cayley graphs of finite $k$-marked groups,
$X$ has cohomological property $(\mathrm{T})$ if and only if it has geometric property $(\mathrm{T})$.
\end{itemize}
\end{theorem}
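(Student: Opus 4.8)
The plan is to establish the three bullets in turn, reserving the bulk of the effort for the first, which is the genuine analogue of the hard (converse) direction of the Delorme--Guichardet theorem; the other two will follow by combining the machinery already assembled. Throughout I use Proposition~\ref{lemma: spectral gap} to identify geometric property $(\mathrm{T})$ with a spectral gap for $\Delta$, and Lemma~\ref{lemma:boundedcocycle} to detect coboundaries.

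For the first bullet I argue by contraposition: assuming $\Delta$ has no spectral gap in $\mathrm{C}^*_\mathrm{u, max}[X]$, I manufacture a $1$-cocycle that is not a coboundary. Fix a faithful representation $\pi$ of $\mathrm{C}^*_\mathrm{u, max}[X]$, restricted to $\mathbb{C}_{\mathrm u}[X]$, so that $0$ is a non-isolated point of the spectrum of $\pi(\Delta)$. Using the spectral projections $\mathbf{1}_{(0,\epsilon)}(\pi(\Delta))$, I choose unit vectors $v_n$ in the range of $\mathbf{1}_{(0,\epsilon_n)}(\pi(\Delta))$ with $\epsilon_n\to 0$; then $\lambda_n:=\langle\pi(\Delta)v_n,v_n\rangle\le\epsilon_n$ and $v_n\perp\ker\pi(\Delta)$. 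Passing to a subsequence I may assume $\lambda_n\to 0$ so fast (say $\lambda_n\le n^{-3}$) that the scalars $c_n^2:=n^{-2}\lambda_n^{-1}$ satisfy $\sum_n c_n^2\lambda_n<\infty$ and $\sum_n c_n^2=\infty$. On the amplification $\pi'=\bigoplus_n\pi$ acting on $\mathcal{H}'=\bigoplus_n\mathcal{H}$ I set $\theta(a)=(c_n\pi(a)v_n)_n$ for $a\in\mathcal{L}_{\mathrm u}[X]$.

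The key point is that this $\theta$ is a well-defined, unbounded $1$-cocycle. Since $X$ is a metric space it is monogenic, so $\mathcal{L}_{\mathrm u}[X]$ is generated as a left $\mathbb{C}_{\mathrm u}[X]$-ideal by its propagation-$1$ elements; writing $a=\sum_j b_j v_j$ with $b_j\in\mathbb{C}_{\mathrm u}[X]$ and each $v_j\in\mathcal{L}_{\mathrm u}[X]$ of propagation $1$, and invoking the discrete Poincar\'e inequality $v_j^*v_j\le C\Delta$ in $\mathrm{C}^*_\mathrm{u, max}[X]$, I obtain $\|\pi(v_j)v_n\|^2\le C\lambda_n$ and hence $\|\pi(a)v_n\|\le C_a\sqrt{\lambda_n}$. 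Therefore $\|\theta(a)\|^2\le C_a^2\sum_n c_n^2\lambda_n<\infty$, so $\theta$ is well defined, and it is visibly a left $\mathbb{C}_{\mathrm u}[X]$-module map, i.e.\ a $1$-cocycle. If $\theta$ were a coboundary, say $\theta(a)=\pi'(a)w$ with $w=(w_n)_n\in\mathcal{H}'$, then $\pi(a)(c_nv_n-w_n)=0$ for every $a\in\mathcal{L}_{\mathrm u}[X]$; taking $a=\Delta$ (which lies in $\mathcal{L}_{\mathrm u}[X]$ as its rows sum to zero) gives $c_nv_n-w_n\in\ker\pi(\Delta)$, whence $\|w_n\|\ge c_n$ (because $v_n\perp\ker\pi(\Delta)$) and $\|w\|^2\ge\sum_n c_n^2=\infty$, a contradiction. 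Thus $\theta$ is not a coboundary, and $X$ fails cohomological property $(\mathrm{T})$, proving the first bullet. I expect the discrete Poincar\'e inequality $v^*v\le C\Delta$ for propagation-$1$ elements $v\in\mathcal{L}_{\mathrm u}[X]$, together with the left-ideal generation statement, to be the main technical obstacle.

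For the second bullet, the hypothesis furnishes a surjection $q\colon\ell_\infty(X)\rtimes G\twoheadrightarrow\mathbb{C}_{\mathrm u}[X]$ intertwining the two augmentation maps. Given a $*$-representation $\pi$ of $\mathbb{C}_{\mathrm u}[X]$ and a $1$-cocycle $\theta\colon\mathcal{L}_{\mathrm u}[X]\to\mathcal{H}$, I pull back along $q$ to a $1$-cocycle for $\ell_\infty(X)\rtimes G$. By Proposition~\ref{proposition:crossedprodcohomology} its class lies in $H^1(G,\mathcal{H})$, which vanishes since $G$ has property $(\mathrm{T})$ (Delorme--Guichardet, equivalently Proposition~\ref{proposition: (T)}). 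Hence the pulled-back cocycle, and therefore $\theta$ itself (as $q$ is onto and carries the augmentation ideal onto $\mathcal{L}_{\mathrm u}[X]$), is a coboundary, giving cohomological property $(\mathrm{T})$. For the third bullet, one implication is the first bullet. For the converse I combine the main theorem with the second: if $X$ has geometric property $(\mathrm{T})$, then every member of the Cayley boundary has property $(\mathrm{T})$ (Theorem~\ref{theorem:main}), so by Lemma~\ref{lemma:finitely many RF (T)} the index set splits as $\mathbb{N}=\bigsqcup_{i=1}^n I_i$ with each $G^{(m)}$, $m\in I_i$, a quotient of a fixed finitely presented property $(\mathrm{T})$ group $\Gamma_i$. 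Then $\Gamma_i$ acts on $X^{(i)}=\bigsqcup_{m\in I_i}\mathrm{Cay}(G^{(m)})$ inducing its coarse structure, so the second bullet shows each $X^{(i)}$ has cohomological property $(\mathrm{T})$. Since $\mathbb{C}_{\mathrm u}[X]=\bigoplus_{i=1}^n\mathbb{C}_{\mathrm u}[X^{(i)}]$ for a finite disjoint union, cocycles split accordingly (as in Corollary~\ref{corollary:finitedisjointunion}) and cohomological property $(\mathrm{T})$ passes to $X$, completing the proof.
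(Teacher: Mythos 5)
Your overall architecture is sound, and for the second and third bullets it coincides with the paper's own proof: the paper likewise proves the second bullet by turning a $1$-cocycle on ${\mathcal L}_{\mathrm u}[X]$ into a group $1$-cocycle for the induced unitary representation of $G$ (your surjection $q\colon\ell_\infty(X)\rtimes G\to{\mathbb C}_{\mathrm u}[X]$ together with Proposition~\ref{proposition:crossedprodcohomology} is a correct way to make this precise, and your check that $q$ carries the augmentation ideal \emph{onto} ${\mathcal L}_{\mathrm u}[X]$ is exactly the point needed to push the coboundary back), and the third bullet is obtained in the paper exactly as you obtain it, from Theorem~\ref{theorem:main} $(\ref{cond:maingeot})\Rightarrow(\ref{cond:maincaybdy})$, Lemma~\ref{lemma:finitely many RF (T)}, the second bullet applied to the $\Gamma_i$-action on each piece, and the splitting of cocycles over a finite disjoint union as in Corollary~\ref{corollary:finitedisjointunion}. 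For the first bullet your contrapositive construction --- a direct-sum cocycle $\theta(a)=(c_n\pi(a)v_n)_n$ with weights satisfying $\sum_n c_n^2\lambda_n<\infty$ but $\sum_n c_n^2=\infty$, contradiction via projecting a putative coboundary vector onto $(\ker\pi(\Delta))^\perp$ --- is structurally the same as the paper's; the difference is the source of the almost-invariant vectors. The paper quotes Proposition 3.8 of Willett--Yu, which hands over representations $\pi_n$ and unit vectors $\xi_n\in({\mathcal H}_n)_c^\perp$ with $\|\pi_n(a-\omega(a))\xi_n\|\le 4^{-n}\sup_{x,y}|a_{x,y}|$ for all $a$ of propagation at most $n$, whereas you build the $v_n$ from spectral projections of $\Delta$ in a single faithful representation of $\mathrm{C}^*_{\mathrm{u,max}}[X]$ and then must transfer ``almost killed by $\Delta$'' to ``almost killed by every $a\in{\mathcal L}_{\mathrm u}[X]$.''

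That transfer is where your proof has its one genuine hole: the two lemmas you flag --- (a) ${\mathcal L}_{\mathrm u}[X]$ is generated as a left ideal by its propagation-$1$ elements, and (b) $v^*v\le C\Delta$ in $\mathrm{C}^*_{\mathrm{u,max}}[X]$ for such $v$ --- are stated but not proved, and they are precisely what the paper's citation of Willett--Yu packages. Be aware also that your justification ``since $X$ is a metric space it is monogenic'' is false for general uniformly locally finite metric spaces (for $X=\{n^2:n\in{\mathbb N}\}\subset{\mathbb R}$ the only propagation-$1$ element of ${\mathcal L}_{\mathrm u}[X]$ is $0$); monogenicity is an implicit standing hypothesis for geometric property $(\mathrm{T})$ (Proposition~\ref{lemma: spectral gap}) to have content, not a property of metric spaces. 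In the intended setting (disjoint unions of connected graphs of bounded geometry) both lemmas are true and can be proved along the following lines: write $\Delta=\sum_e\Delta_e$ as a sum of positive edge Laplacians $\Delta_e=ww^*$ with $w=e_{xx}-e_{yx}$; decompose a propagation-$1$ element $v\in{\mathcal L}_{\mathrm u}[X]$ as a \emph{finite} sum $\sum_{i=1}^N g_i(t_i-r_i)$ with $g_i\in\ell_\infty(X)$ and $t_i$ partial translations supported on edges, $r_i=t_it_i^*$ (bounded geometry gives the finite edge-colouring); observe $(t_i-r_i)^*(t_i-r_i)=d_i+r_i-t_i-t_i^*\le 2\Delta$, which yields (b) with $C=2N^2\|v\|_\infty^2$; and for (a) decompose column differences along geodesics and regroup, again using bounded geometry to keep the sums finite. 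So no step of yours would actually fail, but as written the first bullet rests on an unproven (though true and citable) technical lemma, which is the very content the paper outsources to Willett--Yu.
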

\begin{proof}
Suppose that $X$ does not have geometric property $(\mathrm{T})$.
By Proposition 3.8 in \cite{WillettYu13}, there are $*$-representations $\pi_n$
of ${\mathbb C}_{\mathrm u}[X]$ on ${\mathcal H}_n$ and unit vectors
$\xi_n\in({\mathcal H}_n)_c^\perp$
such that $\|\pi_n(a-\omega(a))\xi_n\|\le4^{-n}\sup_{x,y\in X}
|a_{x,y}|$ for every $a=[a_{x,y}]_{x,y}$ of propagation at most $n$. 
Here $({\mathcal H}_n)_c$ is the space of constant vectors, which consists of the
vectors in ${\mathcal H}_n$ that are annihilated by $\pi_n({\mathcal L}_{\mathrm u}[X])$.
(Remember that our $\omega$ is denoted by $\Phi$ in \cite{WillettYu13}.)
Now, we may define a $1$-cocycle $\theta$ from ${\mathcal L}_{\mathrm u}[X]$
into $\bigoplus_n{\mathcal H}_n$ by $\theta(a)=\sum^{\oplus}_n 2^n \pi_n(a)\xi_n$.
We claim that $\theta$ is not a $1$-coboundary.
For if it were, there would be $\zeta=\sum^{\oplus}_n \zeta_n\in
\bigoplus_n{\mathcal H}_n$ such that
$\theta(a)=\sum^{\oplus}\pi_n(a)\zeta_n$.
This means that $\pi_n(a)\zeta_n=2^n\pi_n(a)\xi_n$ for every $a\in{\mathcal L}_{\mathrm u}[X]$,
or equivalently, $\zeta_n-2^n\xi_n\in({\mathcal H}_n)_c$.
But this implies $P_{({\mathcal H}_n)_c^\perp}\zeta_n=2^n\xi_n$ and contradicts
the fact that $\|\zeta\|<\infty$.
Consequently, $\theta$ is not a $1$-coboundary and $X$ does not have
cohomological property (T). This proves the first assertion.

The second assertion follows from the observation that every $1$-cocycle
$\theta\colon{\mathcal L}_{\mathrm u}[X]\to{\mathcal H}$ is a $1$-coboundary for the
induced unitary
representation of a property (T) group $G$, which fact implies that
$\theta$ is also
a $1$-coboundary for ${\mathbb C}_{\mathrm u}[X]$.
The last assertion follows from 
Theorem~\ref{theorem:main}:$(\ref{cond:maincaybdy})\Leftrightarrow(\ref{cond:maingeot})$,
Lemma~\ref{lemma:finitely many RF (T)}, and the previous two.
\end{proof}

In fact, by adapting the method of \cite{OzawaSquareSum}, one can prove that 
cohomological property $(\mathrm{T})$ implies 
$\Delta^2-\nu\Delta\in\Sigma^2{\mathbb C}_{\mathrm u}[X]$ for some $\nu>0$ (see Theorem~\ref{theorem:noepsilon}). 
The following is a characterization of a Cayley metric space $\mathrm{Cay}(\Gamma)$ to have 
cohomological property $(\mathrm{T})$. Recall that a unitary $G$-rep\-re\-sen\-ta\-tion $\pi$ 
is said to be \emph{weakly regular} if it is weakly contained in the regular 
representation on $\ell_2(G)$. 

\begin{proposition}
Let $X=\mathrm{Cay}(G)$ be the Cayley metric space and assume that 
$X$ has property $\mathrm{A}$ $($or equivalently, $G$ is exact$)$. 
Then, $X$ has cohomological property $(\mathrm{T})$ if and only if 
$H^1(G,{\mathcal H})=0$ for every weakly regular 
unitary $G$-rep\-re\-sen\-ta\-tion ${\mathcal H}$.
\end{proposition}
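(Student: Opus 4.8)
The plan is to establish the equivalence by relating cohomological property $(\mathrm{T})$ of $X = \mathrm{Cay}(G)$ directly to vanishing of group cohomology, using the exactness/property $\mathrm{A}$ hypothesis to restrict the relevant coefficient modules. By Corollary~\ref{cor:ci1}, for every $*$-representation $\pi$ of ${\mathbb C}_{\mathrm u}[X]$ on a Hilbert space ${\mathcal H}$ there is a canonical isomorphism $H^1(X,{\mathcal H}) \cong H^1(G,{\mathcal H})$, where on the right ${\mathcal H}$ is viewed as a $G$-module via the restriction $\pi|_{{\mathbb C}[G]}$ under the embedding ${\mathbb C}[G] \hookrightarrow {\mathbb C}_{\mathrm u}[X]$ coming from ${\mathbb C}_{\mathrm u}[X] \cong \ell_\infty(X)\rtimes G$ (Lemma~\ref{lemma:algroestr}). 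So the question reduces to identifying exactly which unitary $G$-representations arise as restrictions of $*$-representations of ${\mathbb C}_{\mathrm u}[X]$, up to the cohomological vanishing being tested.

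The key point is that when $X$ has property $\mathrm{A}$, the maximal norm on ${\mathbb C}_{\mathrm u}[X]$ agrees with the norm on ${\mathbb B}(\ell_2(X))$, so every $*$-representation factors through the reduced uniform Roe algebra, i.e.\ it is weakly contained in the standard representation on $\ell_2(X)$. First I would observe that the restriction to ${\mathbb C}[G]$ of the standard representation of ${\mathbb C}_{\mathrm u}[X]$ on $\ell_2(X) = \ell_2(G)$ is exactly the regular representation $\lambda$. Consequently, any unitary $G$-representation obtained by restricting a $*$-representation of ${\mathbb C}_{\mathrm u}[X]$ is weakly contained in $\lambda$, hence weakly regular. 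This gives the direction: if $H^1(G,{\mathcal H}) = 0$ for every weakly regular ${\mathcal H}$, then $H^1(X,{\mathcal H}) = 0$ for every $*$-representation of ${\mathbb C}_{\mathrm u}[X]$, so $X$ has cohomological property $(\mathrm{T})$.

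For the converse, I would start from a weakly regular unitary $G$-representation $\pi_0$ on ${\mathcal H}_0$ and produce a $*$-representation of ${\mathbb C}_{\mathrm u}[X]$ whose restriction to ${\mathbb C}[G]$ recovers $\pi_0$. The natural candidate is an \emph{induced} representation: form $\ell_\infty(X)\rtimes G$ acting on $\ell_2(G,{\mathcal H}_0) \cong \ell_2(G)\otimes{\mathcal H}_0$, where $\ell_\infty(G)$ acts diagonally and $G$ acts by the tensor product of $\lambda$ with $\pi_0$; weak regularity of $\pi_0$ ensures $\lambda\otimes\pi_0$ is weakly contained in $\lambda\otimes\lambda \sim \lambda^{\oplus\infty}$, so this assembles into a bona fide $*$-representation of ${\mathbb C}_{\mathrm u}[X]$ on the property $\mathrm{A}$ space. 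Restricting to the $1_G$-isotypical piece, or more carefully using the standard induction-restriction bookkeeping, recovers a copy of $\pi_0$, and the cohomology isomorphism of Corollary~\ref{cor:ci1} transports vanishing of $H^1(X,\cdot)$ back to vanishing of $H^1(G,\pi_0)$.

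The main obstacle I expect is the converse direction, specifically making precise that $H^1(G,\pi_0)$ embeds into (or is detected by) $H^1(X,{\mathcal H})$ for the induced module ${\mathcal H}$, rather than merely that the induced module is weakly regular. One must verify that a nonvanishing $1$-cocycle for $\pi_0$ lifts to a nonvanishing (non-coboundary) $1$-cocycle for the induced ${\mathbb C}_{\mathrm u}[X]$-module, which amounts to checking that the canonical isomorphism of Corollary~\ref{cor:ci1}, combined with the induction functor, is compatible with the coboundary-detection; here the boundedness criterion of Lemma~\ref{lemma:boundedcocycle} should be the right tool, since an unbounded group cocycle on $G\subset\Gamma_X$ yields an unbounded pseudo-group cocycle on full translations. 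The exactness hypothesis is used precisely to guarantee that no extraneous (non-weakly-regular) representations of ${\mathbb C}_{\mathrm u}[X]$ appear, which is what pins the coefficient class down to the weakly regular ones.
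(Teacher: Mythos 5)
Your ``if'' direction is correct and is exactly the paper's argument: property $\mathrm{A}$ gives, via Proposition~1.3 in \cite{SpakulaWillettCrelle}, that every $*$-representation of ${\mathbb C}_{\mathrm u}[X]$ is weakly contained in the representation on $\ell_2(X)$, whose restriction to ${\mathbb C}[G]$ is the regular representation, and Corollary~\ref{cor:ci1} finishes. The converse, however, has a genuine gap, located precisely at the step you flagged. Your induced module is $\ell_2(G)\otimes{\mathcal H}_0$ with $G$ acting by $\lambda\otimes\pi_0$, but by Fell's absorption principle the unitary $\delta_x\otimes v\mapsto \delta_x\otimes\pi_0(x)^*v$ conjugates $\lambda\otimes\pi_0$ onto $\lambda\otimes 1_{{\mathcal H}_0}$, a multiple of the regular representation, \emph{regardless of what $\pi_0$ is}. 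So the restriction to ${\mathbb C}[G]$ of your ${\mathbb C}_{\mathrm u}[X]$-module never remembers $\pi_0$: the subspace $\delta_{1_G}\otimes{\mathcal H}_0$ is not $G$-invariant (there is no ``$1_G$-isotypical piece''), and weak containment of $\pi_0$ in $\lambda$ does not give containment, so no copy of $\pi_0$ sits inside $\lambda^{\oplus\dim{\mathcal H}_0}$ as a subrepresentation. Consequently, vanishing of $H^1(X,\cdot)$ on your induced modules only controls $H^1\left(G,\lambda^{\oplus\infty}\right)$, not $H^1(G,\pi_0)$; Lemma~\ref{lemma:boundedcocycle} cannot repair this, because the unbounded cocycle lives on $\pi_0$, which is not a summand of any representation your construction produces. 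A telling symptom is that weak regularity is never actually used in your construction (Fell absorption holds for arbitrary $\pi_0$), whereas the proposition is false if the weak regularity restriction is dropped: $\mathrm{Cay}(F_2\times F_2)$ has cohomological property $(\mathrm{T})$ although $F_2\times F_2$ does not have property $(\mathrm{T})$, so some non-weakly-regular unitary module has nonvanishing $H^1$.

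The gap is not cosmetic: for a lattice $\Gamma$ in $\mathrm{SL}(2,{\mathbb C})$ (exact and non-amenable, with $\beta_1(\Gamma)=0$) one has $H^1\left(\Gamma,\lambda^{\oplus\infty}\right)=0$, since non-amenability makes the coboundary map bounded below and hence vanishing of reduced $\ell_2$-cohomology yields vanishing of unreduced cohomology for multiples of $\lambda$; yet $H^1(\Gamma,{\mathcal H})\neq 0$ for some weakly regular ${\mathcal H}$ (Guichardet's example, cited in the paper), and the proposition asserts that $\mathrm{Cay}(\Gamma)$ fails cohomological property $(\mathrm{T})$. Your scheme cannot detect this. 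The paper's mechanism for the converse is extension--dilation rather than induction: weak regularity means exactly that $\pi_0$ is a $*$-representation of the reduced group $\mathrm{C}^*$-algebra $\mathrm{C}^*_{\lambda}(G)$, which is a $\mathrm{C}^*$-subalgebra of the uniform Roe algebra $\mathrm{C}^*_{\mathrm{u}}[X]\subset{\mathbb B}(\ell_2(X))$. Arveson's extension theorem extends the ucp map $\pi_0$ to $\mathrm{C}^*_{\mathrm{u}}[X]$, and Stinespring dilation then produces a genuine $*$-representation $\hat{\pi}$ of $\mathrm{C}^*_{\mathrm{u}}[X]$ on some $\hat{\mathcal H}\supset{\mathcal H}_0$ whose compression to ${\mathcal H}_0$ at each $g\in G$ is $\pi_0(g)$; since $\pi_0(g)$ is unitary, ${\mathcal H}_0$ is a reducing subspace for $\hat{\pi}(g)$ (multiplicative-domain argument), so $\pi_0$ is a direct summand of $\hat{\pi}|_G$, whence $H^1(G,\hat{\mathcal H})\supseteq H^1(G,{\mathcal H}_0)\neq 0$, and Corollary~\ref{cor:ci1} gives $H^1(X,\hat{\mathcal H})\neq 0$. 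If you want to salvage your write-up, replace the induction step by this Arveson--Stinespring argument; the ``if'' half can stand as is.
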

\begin{proof}
Since $X=\mathrm{Cay}(G)$ has property $\mathrm{A}$, 
every $*$-representation of ${\mathbb C}_{\mathrm u}[X]$ 
is weakly contained in the regular representation on $\ell_2(X)$, 
by Proposition~1.3 in \cite{SpakulaWillettCrelle}. 
Thus the `if' part of the proposition follows from Corollary~\ref{cor:ci1}. 
Conversely, suppose that $H^1(G,{\mathcal H})\neq0$ for 
a weakly regular $G$-rep\-re\-sen\-ta\-tion $\pi$ on ${\mathcal H}$.
We view $\pi$ as a $*$-homo\-mor\-phism from the reduced group $\mathrm{C}^*$-alge\-bra 
$\mathrm{C}^*_{\lambda}(G)$ into 
${\mathbb B}({\mathcal H})$. By Arveson's and Stinespring's theorems applied in tandem to 
$\mathrm{C}^*_{\lambda}(G)\subset\mathrm{C}^*_{\mathrm{u}}[X]$, 
there are a Hilbert space $\hat{\mathcal H}\supset{\mathcal H}$ and a 
$*$-rep\-re\-sen\-ta\-tion 
$\hat{\pi}\colon \mathrm{C}^*_{\mathrm{u}}[X] \to{\mathbb B}(\hat{\mathcal H})$ 
such that $\hat{\pi}(g)|_{\mathcal H}$ coincides with the original $\pi(g)$ for every $g\in G$.
Since $H^1(G,\hat{\mathcal H})\supset H^1(G,{\mathcal H})\neq0$, one has 
$H^1(X,\hat{\mathcal H})\neq0$ by Corollary~\ref{cor:ci1}. 
\end{proof}

It follows that $\mathrm{Cay}(F_2)$ has geometric property $(\mathrm{T})$ 
(Corollary 6.5 in \cite{WillettYu13}), but not cohomological property $(\mathrm{T})$ 
(since $H^1(F_2,\ell_2(F_2))\neq0$). 
Lattices in $\mathrm{SL}(2,{\mathbb C})$ also do not have cohomological property $(\mathrm{T})$ 
(Exemple~3 in \cite{Guichardet}) although they have zero first $L^2$-Betti numbers 
(Theorem~4.1 in \cite{Lueck}).
These examples show that cohomological property $(\mathrm{T})$ is in general strictly stronger than 
geometric property $(\mathrm{T})$. The authors do not know whether these properties are 
equivalent for a disjoint union of finite metric spaces of bounded geometry. 
There are many Cayley metric spaces that have cohomological property $(\mathrm{T})$, 
besides those come from property $(\mathrm{T})$ groups, e.g., $\mathrm{Cay}(F_2\times F_2)$.
Indeed, it is well known that $H^1(G_1\times G_2,{\mathcal H})=0$ for every non-amenable group $G_i$ 
and every weakly regular representation $(\pi,{\mathcal H})$. We sketch the proof of this fact for 
the reader's convenience. Let $b\colon G_1\times G_2\to{\mathcal H}$ be a $1$-cocycle, i.e., 
it satisfies $b(gh)=b(g)+\pi(g)b(h)$ for every $g,h\in G_1\times G_2$. 
Thus, for every $g_1\in G_1$ and $g_2\in G_2$, one has $(\pi(g_2)-1)b(g_1)=(\pi(g_1)-1)b(g_2)$.
Since $G_i$ is not amenable, $\pi$ does not weakly contain the trivial representation, or equivalently,  
there are a finite subset $S_i\subset G_i$ and $C>0$ such that 
$\|v\|\le C\|\sum_{g\in S_i}(\pi(g)-1)v\|$ holds for all $v\in{\mathcal H}$.
This implies that $b$ is bounded on each of $G_i$'s and hence on $G_1\times G_2$. 
Such a $1$-cocycle is a $1$-coboundary (Proposition 2.2.9 in \cite{BookBekkadelaHarpeValette}). 

\section{On the structure of maximal uniform Roe algebras}\label{section:reptheory}

In this section, we develop the representation theory of the 
maximal uniform Roe algebras of a (coarse) disjoint union. 
For the reader's convenience, we recall the basic properties of
$C(K)$-$\mathrm{C}^*$-alge\-bras \cite{Kasparov} in the unital setting.
Let $K$ be a compact Hausdorff space.
A unital $\mathrm{C}^*$-alge\-bra $A$ is called a $C(K)$-$\mathrm{C}^*$-alge\-bra
if it comes together with a $*$-homo\-mor\-phism $\theta$ from $C(K)$ 
into the center of $A$. We will omit writing $\theta$ as if $C(K)$ is 
a subalgebra of $A$.
For each $t\in K$, let $I_t=\overline{C_0(K\setminus\{t\})A}$ be
the corresponding ideal of $A$ (in fact $I_t= C_0(K\setminus\{t\})A$ by Cohen's
factorization theorem), and denote by $\pi_t\colon A\to A/I_t=:A_t$ the
corresponding quotient. Then, every irreducible representation of $A$ factors through some
$\pi_t$, since its restriction to $C(K)$ is a character associated with some
point $t\in K$. It follows that the representation
$A\ni a\mapsto\prod_t\pi_t(a)\in\prod_t A_t$ is faithful.
In particular, $\mathrm{Sp}(a)=\overline{\bigcup_t\mathrm{Sp}(\pi_t(a))}$ for every $a\in A$. 
Each $A_t$ is called a fiber of $A$.
A $*$-homo\-mor\-phism $\sigma\colon A\to B$ between $C(K)$-$\mathrm{C}^*$-alge\-bras
$A$ and $B$ is simply called a morphism if its restriction to $C(K)$ is the identity map.
Such a morphism naturally induces $*$-homo\-mor\-phisms $\sigma_t\colon A_t\to B_t$
on the fibers. Note that a morphism $\sigma\colon A\to B$ is injective if and only if
it is the case for each fiber $\sigma_t\colon A_t\to B_t$.
In particular, a $C(K)$-$\mathrm{C}^*$-alge\-bra $A$ is nuclear if and only if all fibers
$\{ A_t \}_t$ are nuclear.

Now, we consider a $*$-algebra ${\mathcal B}$ containing $C(K)$ in its center. 
Then its universal enveloping $\mathrm{C}^*$-alge\-bra $A=\mathrm{C}^*({\mathcal B})$  
is a $C(K)$-$\mathrm{C}^*$-alge\-bra. For each $t\in K$, the ideal 
${\mathcal J}_t = C_0(K\setminus\{t\}){\mathcal B}$ is dense in the ideal 
$I_t$ of $A$. It is not hard to see that $A_t=A/I_t$ is the universal enveloping 
$\mathrm{C}^*$-alge\-bra of ${\mathcal B}_t:={\mathcal B}/{\mathcal J}_t$.

Let $\left\{\left(G^{(m)},s_1^{(m)},\ldots,s_k^{(m)}\right)\right\}_m$ 
be a sequence of $k$-marked groups and denote by $\sigma_m$ 
the corresponding homomorphism from $F_k=\langle s_1,\ldots,s_k\rangle$ 
onto $G^{(m)}$ that maps $s_i$ to $s_i^{(m)}$. 
Let $X=\bigsqcup_m\mathrm{Cay}\left(G^{(m)},s_1^{(m)},\ldots,s_k^{(m)}\right)$ be 
the disjoint union. 
For $g\in F_k$, let $v_g$ denote the element in ${\mathbb C}_{\mathrm u}[X]$ 
which is represented by the kernel
\[
(v_g)(x,y)=\left\{\begin{array}{cl}
 1 & \mbox{ if $x,y\in G^{(m)}$ and $x=\sigma_m(g)y$ }\\
 0 & \mbox{ otherwise}
\end{array}\right.
\]
and note that
\[
{\mathbb C}_{\mathrm u}[X]=\left\{ \sum_{g\in F_k}\xi_g v_g \Bigm| 
 \xi_g\in\ell_\infty(X)\mbox{ are zero for all but finitely many $g$} \right\}.
\]
The center of the algebraic uniform Roe algebra ${\mathbb C}_{\mathrm u}[X]$ consists of those functions
in $\ell_\infty(X)$ that are constant on each of $G^{(m)}$'s, and so it is 
canonically isomorphic to $\ell_\infty({\mathbb N})$.
We recall that the Gelfand spectrum of $\ell_\infty({\mathbb N})$ is
the Stone--\v Cech compactification $\beta{\mathbb N}$ of ${\mathbb N}$, and $\ell_\infty({\mathbb N})$ is
$*$-iso\-mor\-phic to $C(\beta{\mathbb N})$. Thus, the maximal uniform Roe algebra
$\mathrm{C}^*_{\mathrm{u, max}}[X]$ is a $C(\beta{\mathbb N})$-$\mathrm{C}^*$-alge\-bra.
Let us fix an element $\omega\in\beta{\mathbb N}$ for a while, and
identify it with the corresponding character $\omega\colon\ell_\infty({\mathbb N})\to{\mathbb C}$.
We will denote $\omega(\xi)$ also by $\lim_\omega\xi(m)$. 
We still abuse the notation and denote the corresponding ultrafilter by $\omega$, too. 
Namely, we identify $\omega$ with the family of those subsets $E\subset{\mathbb N}$ 
such that $\omega(1_E)=1$. Here we recall that an ultrafilter is a 
family of non-empty subsets that satisfies the finite intersection property 
and the maximality condition that $E \notin \omega$ implies $({\mathbb N}\setminus E)\in\omega$. 
For example, $n\in{\mathbb N}$ is identified with the principal character
$\ell_\infty({\mathbb N})\ni\xi\mapsto\xi(n)$ and with the principal ultrafilter consisting of
the subsets $E\subset{\mathbb N}$ that contain $n$. 
By the universality of the Stone--\v Cech compactification, 
the map ${\mathbb N}\ni m\mapsto G^{(m)}\in{\mathcal G}(k)$ extends to a continuous map
$\beta{\mathbb N}\ni \omega\mapsto G^{(\omega)}\in{\mathcal G}(k)$.
We note that 
\[
G^{(\omega)}=F_k/\{ w \in F_k : \{ m\in{\mathbb N} \mid \sigma_m(w) = 1 \} \in \omega \}
\]
as a $k$-marked group, with the 
corresponding homomorphism denoted by $\sigma_\omega\colon F_k\to G^{(\omega)}$.
To see the relation between $G^{(\omega)}$ and ${\mathbb C}_{\mathrm u}[X]_\omega$, 
let us observe that
\[
C_0(\beta{\mathbb N}\setminus\{\omega\})\ell_\infty(X)
=\left\{(\xi^{(m)})_{m=1}^\infty\in \prod_{m=1}^\infty\ell_\infty\left(G^{(m)}\right) \biggm| \lim_\omega \|\xi^{(m)}\|=0\right\}
\]
and 
\[
C_0(\beta{\mathbb N}\setminus\{\omega\}){\mathbb C}_{\mathrm u}[X] 
 = \left\{ a=[a_{x,y}]_{x,y}\in{\mathbb C}_{\mathrm u}[X] \Bigm| \lim_\omega\sup_{x,y\in G^{(m)}}|a_{x,y}|=0 \right\}.
\]
Hence, $\pi_\omega(v_g)=1$ in ${\mathbb C}_{\mathrm u}[X]_\omega$ 
$\Leftrightarrow$ $1-v_g \in C_0(\beta{\mathbb N}\setminus\{\omega\}){\mathbb C}_{\mathrm u}[X] $ 
$\Leftrightarrow$ $\sigma_\omega(g)=1$.
This means that $g\mapsto\pi_\omega(v_g)$ gives rise to an 
inclusion $G^{(\omega)}\hookrightarrow{\mathbb C}_{\mathrm u}[X]_\omega$.
Moreover, ${\mathbb C}_{\mathrm u}[X]_\omega$ is canonically isomorphic to 
the algebraic crossed product $\ell_\infty(X)_\omega\rtimes G^{(\omega)}$.
Here $\ell_\infty(X)_\omega
 := \ell_\infty(X)/(C_0(\beta{\mathbb N}\setminus\{\omega\})\ell_\infty(X))$, 
which is nothing but the ultraproduct $\mathrm{C}^*$-alge\-bra of $\ell_\infty\left(G^{(m)}\right)$'s.
We denote by $\xi^{(\omega)}$ the element in $\ell_\infty(X)_\omega$
that corresponds to $\xi=(\xi^{(m)})_{m=1}^\infty\in\prod_{m=1}^\infty \ell_\infty\left(G^{(m)}\right)$.
For $g\in G^{(\omega)}$, let $u_g$ denote the corresponding element in
the maximal (a.k.a.\ full) crossed product $\mathrm{C}^*$-alge\-bra 
$\ell_\infty(X)_\omega\rtimes_{\max}G^{(\omega)}$.

Gathering all the above discussions, we have followed due process to ensure 
canonicality of the canonical maps. 

\begin{theorem}\label{theorem:rep}
Let $X=\bigsqcup_{m\in{\mathbb N}}\mathrm{Cay}\left(G^{(m)},s_1^{(m)},\ldots,s_k^{(m)}\right)$ be as above.
With the above notation, the maximal uniform Roe algebra
$\mathrm{C}^*_{\mathrm{u, max}}[X]$ is a $C(\beta{\mathbb N})$-$\mathrm{C}^*$-alge\-bra and 
for each $\omega\in\beta{\mathbb N}$ there is a canonical $*$-iso\-mor\-phism
\[
\mathrm{C}^*_{\mathrm{u, max}}[X]_\omega \cong \ell_\infty(X)_\omega\rtimes_{\max}G^{(\omega)}
\]
that sends $\sum_{g\in F_k} \xi_g v_g$ in ${\mathbb C}_{\mathrm u}[X]$ to $\sum_{g\in F_k} \xi^{(\omega)}_g u_{\sigma_\omega(g)}$.
\end{theorem}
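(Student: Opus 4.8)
The plan is to assemble the ingredients prepared in the discussion above and reduce the statement to the universal property of the maximal crossed product. First I would dispose of the first assertion: $\mathrm{C}^*_{\mathrm{u, max}}[X]$ is by definition the universal enveloping $\mathrm{C}^*$-algebra of the $*$-algebra ${\mathbb C}_{\mathrm u}[X]$, and the latter contains the central copy of $\ell_\infty({\mathbb N})\cong C(\beta{\mathbb N})$ identified above, so $\mathrm{C}^*_{\mathrm{u, max}}[X]$ is a $C(\beta{\mathbb N})$-$\mathrm{C}^*$-algebra.

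Fixing $\omega\in\beta{\mathbb N}$, I would then apply the general fact recorded earlier for a $*$-algebra ${\mathcal B}$ with $C(K)$ in its center: the fiber $A_\omega=A/I_\omega$ of $A=\mathrm{C}^*({\mathcal B})$ is the universal enveloping $\mathrm{C}^*$-algebra of ${\mathcal B}_\omega={\mathcal B}/{\mathcal J}_\omega$. With ${\mathcal B}={\mathbb C}_{\mathrm u}[X]$ this identifies the fiber $\mathrm{C}^*_{\mathrm{u, max}}[X]_\omega$ with the universal enveloping $\mathrm{C}^*$-algebra of ${\mathbb C}_{\mathrm u}[X]_\omega$. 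The discussion preceding the theorem already furnishes a canonical $*$-algebra isomorphism ${\mathbb C}_{\mathrm u}[X]_\omega\cong\ell_\infty(X)_\omega\rtimes G^{(\omega)}$ onto the \emph{algebraic} crossed product, under which $\pi_\omega(v_g)=u_{\sigma_\omega(g)}$ and $\pi_\omega(\xi)=\xi^{(\omega)}$ on generators. Thus it only remains to match the universal enveloping $\mathrm{C}^*$-algebra of this algebraic crossed product with $\ell_\infty(X)_\omega\rtimes_{\max}G^{(\omega)}$.

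For that final step I would invoke the universal property of the maximal crossed product of the unital discrete $\mathrm{C}^*$-dynamical system $(\ell_\infty(X)_\omega, G^{(\omega)})$. Any $*$-representation of the algebraic crossed product restricts to a $*$-representation of the $\mathrm{C}^*$-algebra $\ell_\infty(X)_\omega$ by bounded operators (hence automatically contractive) together with a unitary representation of $G^{(\omega)}$ obeying the covariance relation; conversely every covariant pair integrates to such a representation. Because $\|\pi(\sum_g\xi_g u_g)\|\le\sum_g\|\xi_g\|$ the universal norm is finite, and the above bijection shows it equals the maximal crossed product norm, so the two completions coincide and carry the asserted formula on generators. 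I expect the main (if routine) obstacle to be exactly this matching of completions, i.e.\ verifying that \emph{every} $*$-representation of the bare algebraic crossed product is contractive on the coefficient $\mathrm{C}^*$-algebra and therefore covariant, together with the bookkeeping ensuring that all the intervening isomorphisms are canonical and compose to send $\sum_g\xi_g v_g$ to $\sum_g\xi^{(\omega)}_g u_{\sigma_\omega(g)}$.
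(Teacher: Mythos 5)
Your proposal is correct and takes essentially the same route as the paper: the paper's proof \emph{is} the discussion preceding the theorem (the center of ${\mathbb C}_{\mathrm u}[X]$ is $\ell_\infty({\mathbb N})\cong C(\beta{\mathbb N})$; fibers of the universal enveloping $\mathrm{C}^*$-algebra of a $*$-algebra with central $C(K)$ are the universal enveloping $\mathrm{C}^*$-algebras of the quotients ${\mathcal B}/{\mathcal J}_\omega$; and ${\mathbb C}_{\mathrm u}[X]_\omega\cong\ell_\infty(X)_\omega\rtimes G^{(\omega)}$ as $*$-algebras), gathered together exactly as you do. The only step the paper leaves implicit --- that the universal enveloping $\mathrm{C}^*$-algebra of the algebraic crossed product is $\ell_\infty(X)_\omega\rtimes_{\max}G^{(\omega)}$, via automatic contractivity on the coefficient algebra and the covariance/integration correspondence --- is precisely the standard argument you supply, so nothing is missing or different in substance.
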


\begin{corollary}
Use the same notation as in Theorem $\ref{theorem:rep}$ and assume that all $G^{(m)}$'s are amenable.
Then, for every $\omega\in\beta{\mathbb N}$ there is a canonical $*$-iso\-mor\-phism 
\[
\pi_\omega(\mathrm{C}^*(\{ v_g \mid g\in F_k\})) \cong \mathrm{C}^*_{\mathrm{max}}\left[G^{(\omega)}\right].
\]
Moreover, it is the range of a conditional expectation on $\mathrm{C}^*_{\mathrm{u, max}}[X]_\omega$.
\end{corollary}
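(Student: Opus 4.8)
The plan is to read off the statement from Theorem~\ref{theorem:rep} and then construct a conditional expectation by averaging, exactly as in Lemma~\ref{lemma: square sum} but performed in the ultraproduct. First I would use the canonical isomorphism $\mathrm{C}^*_{\mathrm{u, max}}[X]_\omega \cong \ell_\infty(X)_\omega \rtimes_{\max} G^{(\omega)}$. Under it, $\pi_\omega(v_g) = u_{\sigma_\omega(g)}$, and since $\sigma_\omega \colon F_k \to G^{(\omega)}$ is surjective, the set $\{\pi_\omega(v_g) : g \in F_k\}$ is precisely $\{u_h : h \in G^{(\omega)}\}$. As $\pi_\omega$ is a $*$-homomorphism, $\pi_\omega(\mathrm{C}^*(\{v_g\})) = \mathrm{C}^*(\{u_h : h \in G^{(\omega)}\})$, which is the image of the canonical $*$-homomorphism $\iota \colon \mathrm{C}^*_{\max}[G^{(\omega)}] \to \ell_\infty(X)_\omega \rtimes_{\max} G^{(\omega)}$ sending $u_h$ to $u_h$. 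Thus the corollary amounts to showing that $\iota$ is isometric and that its image is the range of a conditional expectation.

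To build the expectation I would exploit amenability of the $G^{(m)}$'s. Fix a left-invariant mean $\Psi_m$ on $\ell_\infty(G^{(m)})$ and set $\Psi_\omega(\xi^{(\omega)}) = \lim_\omega \Psi_m(\xi^{(m)})$. The estimate $|\Psi_m(\xi^{(m)})| \le \|\xi^{(m)}\|$ shows this passes to the ultraproduct $\ell_\infty(X)_\omega$ and defines a state there; left-invariance of each $\Psi_m$ under the translations $\sigma_m(w)$ passes to the limit, so $\Psi_\omega$ is $G^{(\omega)}$-invariant. I would then define the slice map $E_\omega \colon \ell_\infty(X)_\omega \rtimes_{\max} G^{(\omega)} \to \mathrm{C}^*_{\max}[G^{(\omega)}]$ on the algebraic crossed product by $\sum_h a_h u_h \mapsto \sum_h \Psi_\omega(a_h) u_h$.

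The crux, which is the step I expect to demand the most care, is to verify that $E_\omega$ is unital completely positive and hence extends continuously to the maximal crossed product. This is the computation in the proof of Lemma~\ref{lemma: square sum} transplanted to the present setting: for $\xi = \sum_h a_h u_h$ one gets $E_\omega(\xi^* \xi) = \sum_{g,h} \Psi_\omega(a_g^* a_h)\, u_{g^{-1}h}$, where $G^{(\omega)}$-invariance of $\Psi_\omega$ is what removes the action on the coefficients; the scalar matrix $[\Psi_\omega(a_g^* a_h)]_{g,h}$ is positive semidefinite because $\Psi_\omega$ is a state, and taking its square root exhibits $E_\omega(\xi^* \xi)$ as a sum of squares in $\mathbb{C}[G^{(\omega)}]$. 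The identical argument applied entrywise to matrix amplifications gives complete positivity. A unital $*$-linear map carrying matricial sums of squares to sums of squares extends to a ucp map between the maximal enveloping $\mathrm{C}^*$-algebras, by the semi-pre-$\mathrm{C}^*$-algebra theory underlying Proposition~\ref{proposition: positive elements in algebra}, so $E_\omega$ is a genuine ucp map.

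Finally I would harvest both conclusions formally. Since $E_\omega(u_h) = u_h$, we have $E_\omega \circ \iota = \mathrm{id}$ on $\mathbb{C}[G^{(\omega)}]$ and hence everywhere; as $\iota$ is a contractive $*$-homomorphism with a contractive left inverse, it is isometric, which yields the asserted $*$-isomorphism $\mathrm{C}^*_{\max}[G^{(\omega)}] \cong \pi_\omega(\mathrm{C}^*(\{v_g\}))$ and in particular shows the image is closed. The composite $\iota \circ E_\omega$ is then a ucp idempotent with range equal to this image, so by Tomiyama's theorem it is a conditional expectation, giving the last assertion. It is worth emphasizing that amenability is used essentially and only to produce the invariant mean $\Psi_\omega$; without it the canonical map $\mathrm{C}^*_{\max}[G] \to \mathrm{C}^*_{\mathrm{u, max}}[X]$ need not be faithful, as already noted in the preliminaries.
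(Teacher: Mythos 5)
Your proposal is correct and takes essentially the same route as the paper: the paper's proof likewise fixes $G^{(m)}$-invariant means $\mu_m$, forms the ultralimit state $\xi\mapsto\lim_\omega\mu_m\left(\xi^{(m)}\right)$ on $\ell_\infty(X)_\omega$, and concludes that the canonical $*$-homomorphism $\mathrm{C}^*_{\mathrm{max}}\left[G^{(\omega)}\right]\to\ell_\infty(X)_\omega\rtimes_{\max}G^{(\omega)}$ is faithful. Your slice-map construction of the conditional expectation simply fills in, along the lines of Lemma~\ref{lemma: square sum}, the standard argument that the paper compresses into ``it follows.''
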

\begin{proof}
Since $G^{(m)}$'s are amenable, there are $G^{(m)}$-invariant 
states $\mu_m$ on $\ell_\infty\left(G^{(m)}\right)$, and hence for every $\omega\in\beta{\mathbb N}$ 
the state $\ell_\infty(X)\ni\xi\mapsto\lim_\omega\mu_m(\xi^{(m)})$ gives rise to 
a $G^{(\omega)}$-invariant state on $\ell_\infty(X)_\omega$. 
It follows that the canonical $*$-homo\-mor\-phism from 
$\mathrm{C}^*_{\mathrm{max}}\left[G^{(\omega)}\right]$ into 
$\ell_\infty(X)_\omega\rtimes_{\max}G^{(\omega)}$ is faithful.
\end{proof}

This corollary gives an alternative proof that if 
$X=\bigsqcup_m\mathrm{Cay}\left(G^{(m)},s_1^{(m)},\ldots,s_k^{(m)}\right)$ 
has geometric property $(\mathrm{T})$, 
then all the groups in the Cayley boundary of $\left\{ G^{(m)} \right\}_m$ have property $(\mathrm{T})$.
It also recovers a result of \cite{MimuraSakoI} 
that $X$ has property $\mathrm{A}$ if and only if all the groups in 
the Cayley boundary are amenable. 

On this occasion, we clarify the relation between the two different notions 
of a disjoint union and their associated maximal uniform Roe algebras. 
So, let $\bigsqcup_m X^{(m)}$ (resp.\ $\coprod_m X^{(m)}$) be 
the disjoint union (resp.\ coarse disjoint union) of a given sequence 
$\left\{X^{(m)}\right\}_m$ of metric spaces. 
Thus, as a set $X:=\bigsqcup_m X^{(m)}=\coprod_m X^{(m)}$ and 
as a coarse space 
${\mathbb C}_{\mathrm u}\left[\coprod_m X^{(m)}\right]
 = {\mathbb C}_{\mathrm u}\left[\bigsqcup_m X^{(m)}\right]+{\mathbb K}_0$, 
where ${\mathbb K}_0$ is the algebra of the finitely supported kernels on $X$. 
Since ${\mathbb K}_0$ is an ideal of ${\mathbb C}_{\mathrm u}\left[\coprod_m X^{(m)}\right]$ 
and it has the unique $\mathrm{C}^*$-com\-ple\-tion ${\mathbb K}(\ell_2(X))$, 
it gives rise to an embedding 
${\mathbb K}(\ell_2(X))\hookrightarrow \mathrm{C}^*_{\mathrm{u, max}}\left[\coprod_m X^{(m)}\right]$ 
as a closed two-sided ideal. 
There is a conditional expectation $E$ from 
${\mathbb C}_{\mathrm u}\left[\coprod_m X^{(m)}\right]$ onto 
${\mathbb C}_{\mathrm u}\left[\bigsqcup_m X^{(m)}\right]$, given by 
$E(a)_{x,y}=a_{x,y}$ if $x$ and $y$ belong to the same $X^{(m)}$ else $E(a)_{x,y}=0$. 

\begin{proposition}
Let $\left\{X^{(m)}\right\}_m$ be a sequence of metric spaces. Then, the 
canonical embedding of 
${\mathbb C}_{\mathrm u}\left[\bigsqcup_m X^{(m)}\right]$ into 
${\mathbb C}_{\mathrm u}\left[\coprod_m X^{(m)}\right]$
extends to a faithful embedding of 
$\mathrm{C}^*_{\mathrm{u, max}}\left[\bigsqcup_m X^{(m)}\right]$ into 
$\mathrm{C}^*_{\mathrm{u, max}}\left[\coprod_m X^{(m)}\right]$.
Hence, one has a canonical identification
\[
\mathrm{C}^*_{\mathrm{u, max}}\left[\coprod_m X^{(m)}\right]
=\mathrm{C}^*_{\mathrm{u, max}}\left[\bigsqcup_m X^{(m)}\right]+{\mathbb K}(\ell_2(X)).
\]
The conditional expectation $E$ also extends to a faithful, unital and 
completely positive conditional expectation from 
$\mathrm{C}^*_{\mathrm{u, max}}\left[\coprod_m X^{(m)}\right]$ onto 
$\mathrm{C}^*_{\mathrm{u, max}}\left[\bigsqcup_m X^{(m)}\right]$. 
\end{proposition}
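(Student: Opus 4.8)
The plan is to obtain the embedding from the universal property and then reduce the whole statement to the single assertion that this embedding is isometric. Write $A=\mathrm{C}^*_{\mathrm{u, max}}[\bigsqcup_m X^{(m)}]$ and $B=\mathrm{C}^*_{\mathrm{u, max}}[\coprod_m X^{(m)}]$. The inclusion $\mathbb{C}_{\mathrm u}[\bigsqcup_m X^{(m)}]\hookrightarrow\mathbb{C}_{\mathrm u}[\coprod_m X^{(m)}]\subset B$ is a $*$-homomorphism into a C$^*$-algebra, hence bounded by the maximal norm, so it extends to a $*$-homomorphism $\Phi\colon A\to B$; restricting representations of $B$ to the subalgebra gives at once $\|\Phi(a)\|_B\le\|a\|_A$. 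Since an injective $*$-homomorphism between C$^*$-algebras is automatically isometric, once $\Phi$ is shown faithful the first assertion is done, and the other two follow cheaply: the sum of the C$^*$-subalgebra $\Phi(A)$ and the closed ideal $\mathbb{K}(\ell_2(X))$ is again a C$^*$-subalgebra, hence closed, so the density of $\mathbb{C}_{\mathrm u}[\bigsqcup_m X^{(m)}]+\mathbb{K}_0$ in $B$ upgrades to $B=\Phi(A)+\mathbb{K}(\ell_2(X))$.

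The heart of the matter, and the step I expect to be the main obstacle, is the reverse inequality $\|a\|_A\le\|\Phi(a)\|_B$ for $a\in\mathbb{C}_{\mathrm u}[\bigsqcup_m X^{(m)}]$, which I would prove representation-theoretically. Let $J\subset A$ be the closed two-sided ideal generated by the finitely supported block-diagonal kernels, and let $\pi$ be any $*$-representation of $\mathbb{C}_{\mathrm u}[\bigsqcup_m X^{(m)}]$; decompose its space as $\mathcal H_0\oplus\mathcal H_1$, where $\mathcal H_0$ is the essential subspace of $\pi|_J$ and $\pi(J)|_{\mathcal H_1}=0$. On $\mathcal H_0$ the representation of $J\cong\bigoplus_m\mathbb{K}(\ell_2(X^{(m)}))$ is a multiple of the standard one, so $\pi|_{\mathcal H_0}$ is an amplification of the defining representation and $\|\pi|_{\mathcal H_0}(a)\|=\sup_m\|a_m\|=\|\rho_0(\Phi(a))\|\le\|\Phi(a)\|_B$, where $\rho_0$ is the tautological representation of $B$ on $\ell_2(X)$. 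The key observation handles $\mathcal H_1$: the block-diagonal conditional expectation $E\colon\mathbb{C}_{\mathrm u}[\coprod_m X^{(m)}]\to\mathbb{C}_{\mathrm u}[\bigsqcup_m X^{(m)}]$ is \emph{multiplicative modulo finitely supported kernels}, because for $a,b\in\mathbb{C}_{\mathrm u}[\coprod_m X^{(m)}]$ the defect $E(ab)-E(a)E(b)=E(a_{\mathrm{off}}b_{\mathrm{off}})$ involves only the off-diagonal parts, which lie in $\mathbb{K}_0$ by the stated decomposition. Hence whenever $\pi$ annihilates these kernels (as $\pi|_{\mathcal H_1}$ does), the composite $\pi|_{\mathcal H_1}\circ E$ is an honest unital $*$-representation of $\mathbb{C}_{\mathrm u}[\coprod_m X^{(m)}]$, therefore bounded by $\|\cdot\|_B$; evaluating on $a$, where $E(a)=a$, gives $\|\pi|_{\mathcal H_1}(a)\|\le\|\Phi(a)\|_B$. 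Taking the maximum over the two summands and then the supremum over $\pi$ yields $\|a\|_A\le\|\Phi(a)\|_B$, so $\Phi$ is isometric.

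For the conditional expectation I would realize the extension of $E$ as an average over a gauge action. Let $u_\theta\in\ell_\infty(X)\subset\mathbb{C}_{\mathrm u}[\coprod_m X^{(m)}]$ be the unitary equal to $e^{i\theta_m}$ on $X^{(m)}$, for $\theta=(\theta_m)\in\mathbb{T}^{\mathbb N}$, and set $\tilde E(b)=\int_{\mathbb{T}^{\mathbb N}}u_\theta b\,u_\theta^*\,d\theta$ against the Haar probability measure. For $b$ of finite propagation the integrand depends on only finitely many coordinates and is norm-continuous, and a uniform approximation extends continuity to all of $B$, so the Bochner integral makes sense and defines a unital completely positive map; on $\mathbb{C}_{\mathrm u}[\coprod_m X^{(m)}]$ it kills every off-diagonal block and thus restricts to $E$, with range the closure $\Phi(A)$ of the block-diagonal part. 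Since each $\mathrm{Ad}(u_\theta)$ is a $*$-automorphism fixing $\Phi(A)$ pointwise, $\tilde E$ is a conditional expectation onto $\Phi(A)\cong A$, and it is faithful because $\tilde E(b^*b)=0$ forces the positive norm-continuous integrand $u_\theta b^*b\,u_\theta^*$ to vanish identically, in particular at $\theta=0$. I expect the only delicate points to be the multiplicativity-modulo-compacts identity that turns $\pi\circ E$ into a representation—this is precisely what renders the free-ultrafilter fibers of $A$ visible inside $B$, in the spirit of Theorem~\ref{theorem:rep}—and the routine continuity check for the gauge average; neither should require anything beyond the facts already recorded above.
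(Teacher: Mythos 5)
Your proposal is correct, and it diverges from the paper's proof in ways worth recording. For the embedding, the paper runs a diagram argument: the closure of $J_0={\mathbb K}_0\cap{\mathbb C}_{\mathrm u}\left[\bigsqcup_m X^{(m)}\right]$ is its unique $\mathrm{C}^*$-completion $J=\bigoplus_m{\mathbb M}_{X^{(m)}}$, the algebraic identity ${\mathbb C}_{\mathrm u}\left[\coprod_m X^{(m)}\right]/{\mathbb K}_0={\mathbb C}_{\mathrm u}\left[\bigsqcup_m X^{(m)}\right]/J_0$ makes the induced map on quotients an isomorphism, and injectivity on the ideal plus the quotient yields injectivity of the middle arrow. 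Your representation-theoretic argument is the same mechanism unpacked by hand: your essential subspace ${\mathcal H}_0$ is where uniqueness of the $\mathrm{C}^*$-structure on $J$ enters, your null subspace ${\mathcal H}_1$ carries exactly a representation factoring through the common quotient, and your identity $E(ab)-E(a)E(b)=E(a_{\mathrm{off}}b_{\mathrm{off}})\in J_0$ is precisely the algebraic fact underlying the paper's identification of the two quotients; what you gain is the explicit inequality $\|a\|_A\le\|\Phi(a)\|_B$ rather than injectivity plus automatic isometry, at the cost of invoking the classification of nondegenerate representations of $\bigoplus_m{\mathbb M}_{X^{(m)}}$ (note that your equality $\|\pi|_{{\mathcal H}_0}(a)\|=\sup_m\|a_m\|$ should be an inequality $\le$, since only blocks of nonzero multiplicity appear, but that is the direction you need). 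For the conditional expectation the divergence is genuine: the paper extends $E$ via the pinching identity $E(a)=\sum_{m\le N}1_{X^{(m)}}a1_{X^{(m)}}+\bigl(1-p_N\bigr)a\bigl(1-p_N\bigr)$, valid for finite-propagation $a$ and large $N$, which is manifestly unital completely positive and max-norm contractive, and then deduces faithfulness from faithfulness on the ideal ${\mathbb K}(\ell_2(X))$ and on the quotient; your gauge average $\tilde E(b)=\int_{{\mathbb T}^{\mathbb N}}u_\theta b u_\theta^*\,d\theta$ delivers the u.c.p.\ extension, the expectation property, and faithfulness (norm-continuity of the integrand, full support of Haar measure, evaluation at $\theta=0$) in one stroke, at the cost of the Bochner-integral setup. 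Both routes are complete; yours makes faithfulness more transparent, the paper's avoids any integration and stays at the level of elementary compressions.
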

\begin{proof}
We first note that $J_0:={\mathbb K}_0\cap {\mathbb C}_{\mathrm u}\left[\bigsqcup_m X^{(m)}\right]$ 
has unique $\mathrm{C}^*$-com\-ple\-tion $J=\bigoplus_m{\mathbb M}_{X^{(m)}}$ 
(the $\mathrm{C}^*$-direct sum). Since 
${\mathbb C}_{\mathrm u}\left[\coprod_m X^{(m)}\right]/{\mathbb K}_0
 ={\mathbb C}_{\mathrm u}\left[\bigsqcup_m X^{(m)}\right]/J_0$, one has 
the commuting diagram 
\[\begin{CD}
J @>>> \mathrm{C}^*_{\mathrm{u, max}}\left[\bigsqcup_m X^{(m)}\right] @>>> \mathrm{C}^*_{\mathrm{u, max}}\left[\bigsqcup_m X^{(m)}\right]/J\\
@VVV @V{\iota}VV @V{\cong}VV \\ 
{\mathbb K}(\ell_2(X)) @>>> \mathrm{C}^*_{\mathrm{u, max}}\left[\coprod_m X^{(m)}\right] @>>> \mathrm{C}^*_{\mathrm{u, max}}\left[\coprod_m X^{(m)}\right]/{\mathbb K}(\ell_2(X))
\end{CD}\]
where the right column morphism is an isomorphism. 
Since the inclusion $\iota$ is injective on $J$, 
it is injective on $\mathrm{C}^*_{\mathrm{u, max}}\left[\bigsqcup_m X^{(m)}\right]$ also.
That $E$ extends to a unital and completely positive map on 
$\mathrm{C}^*_{\mathrm{u, max}}\left[\coprod_m X^{(m)}\right]$ follows from the fact that 
for every $a \in {\mathbb C}_{\mathrm u}\left[\coprod_m X^{(m)}\right]$ one has 
\[
E(a) = \sum_{m=1}^N 1_{X^{(m)}} a 1_{X^{(m)}} + (1-\sum_{m=1}^N 1_{X^{(m)}}) a (1-\sum_{m=1}^N 1_{X^{(m)}})
\]
for a large enough $N$, and in particular $\|E(a)\|_{\mathrm{max}}\le\|a\|_{\mathrm{max}}$.
Since $E$ is faithful on ${\mathbb K}(\ell_2(X))$ and 
$\mathrm{C}^*_{\mathrm{u, max}}\left[\coprod_m X^{(m)}\right]/{\mathbb K}(\ell_2(X))$, 
it is faithful on $\mathrm{C}^*_{\mathrm{u, max}}\left[\coprod_m X^{(m)}\right]$.
\end{proof}

\bibliographystyle{amsalpha}
\bibliography{cayley3.bib}

\providecommand{\bysame}{\leavevmode\hbox to3em{\hrulefill}\thinspace}
\providecommand{\MR}{\relax\ifhmode\unskip\space\fi MR }
\providecommand{\MRhref}[2]{%
  \href{http://www.ams.org/mathscinet-getitem?mr=#1}{#2}
}
\providecommand{\href}[2]{#2}
\begin{thebibliography}{BdlHV08}

\bibitem[BdlHV08]{BookBekkadelaHarpeValette}
B.~Bekka, P.~de~la Harpe, and A.~Valette, \emph{Kazhdan's property ({T})},
  Combridge University Press, 2008.

\bibitem[BO08]{OzawaBook}
N.~P. Brown and N.~Ozawa, \emph{{${C}^*$}-algebras and finite-dimensional
  approximations}, Graduate Studies in Mathematics, vol.~88, American
  Mathematical Society, Providence, RI, 2008.

\bibitem[CWW13]{ChWW}
X.~Chen, Q.~Wang, and X.~Wang, \emph{Characterization of a-{T}-menability by
  fibred coarse embedding into {H}ilbert space.}, Bull. Lond. Math. Soc.
  \textbf{45} (2013), no.~5, 1091--1099.

\bibitem[CWY13]{ChWY}
X.~Chen, Q.~Wang, and G.~Yu, \emph{The maximal coarse {B}aum-{C}onnes
  conjecture for spaces which admit a fibred coarse embedding into {H}ilbert
  space.}, Adv.\ Math. \textbf{249} (2013), 88--130.

\bibitem[dlH00]{delaHarpeGGT}
P.~de~la Harpe, \emph{Topics in {G}eometric {G}roup {T}heory}, Chicago Lectures
  in Mathematics, University of Chicago Press, Chicago, IL, 2000.

\bibitem[Dym10]{Dymarz}
T.~Dymarz, \emph{Bilipschitz equivalence is not equivalent to quasi-isometric
  equivalence for finitely generated groups}, Duke Math. J. \textbf{154}
  (2010), no.~3, 509--526.

\bibitem[Ele98]{Elek}
G.~Elek, \emph{Coarse cohomology and $\ell_p$-cohomology}, $K$-Theory
  \textbf{13} (1998), no.~1, 1--22.

\bibitem[Gri83]{GrigorchukIntermediate}
R.~I. Grigorchuk, \emph{On the {M}ilnor problem of group growth}, Dokl. Akad.
  Nauk SSSR \textbf{271} (1983), no.~1, 30--33.

\bibitem[Gro93]{GromovAsym}
M.~Gromov, \emph{Asymptotic invariants of infinite groups}, London Math. Soc.
  Lecture Note Ser., vol. 182, Cambridge University Press, Cambridge, 1993.

\bibitem[Gui72]{Guichardet}
A.~Guichardet, \emph{Sur la cohomologie des groupes topologiques. {II}.}, Bull.
  Sci. Math. (2) \textbf{96} (1972), 305--332.

\bibitem[Kas88]{Kasparov}
G.~Kasparov, \emph{Equivariant ${KK}$-theory and the {N}ovikov conjecture},
  Invent. Math. \textbf{91} (1988), no.~1, 147--201.

\bibitem[Lub94]{BookLubotzky}
A.~Lubotzky, \emph{Discrete groups, expanding graphs and invariant measures},
  Progress in Mathematics, vol. 125, Birkh\"auser Verlag, Basel, 1994, With an
  appendix by Jonathan D. Rogawski.

\bibitem[L{\"u}c02]{Lueck}
W.~L{\"u}ck, \emph{${L}^2$-invariants: theory and applications to geometry and
  ${K}$-theory}, Ergeb. Math. Grenzgeb. (3), vol.~44, Springer-Verlag, New
  York, 2002.

\bibitem[LZ]{BookLubotzkyZuk}
A.~Lubotzky and A.~\.{Z}uk, \emph{On property ($\tau$)}, Book in preparation.

\bibitem[MS13]{MimuraSakoI}
M.~Mimura and H.~Sako, \emph{Group approximation in {C}ayley topology and
  coarse geometry, {P}art {I}: {C}oarse embeddings of ameneble groups},
  preprint, arXiv:1310.4736 (2013).

\bibitem[MS14a]{MimuraSakoII}
\bysame, \emph{Group approximation in {C}ayley topology and coarse geometry,
  {P}art {II}: {F}ibered coarse embeddings}, in preparation (2014).

\bibitem[MS14b]{MimuraSakoIV}
\bysame, \emph{Group approximation in {C}ayley topology and coarse geometry,
  {P}art {IV}: {S}pace of rooted {S}chreier diagrams and metric {K}azhdan
  constants}, in preparation (2014).

\bibitem[NY12]{NowakYuBook}
P.~W. Nowak and G.~Yu, \emph{Large scale geometry}, EMS Textbooks in
  Mathematics, European Mathematical Society, Z\"urich, 2012.

\bibitem[Ogu]{oguni}
S.~Oguni, \emph{${L}^2$-invariants of groups under coarse equivalence and of
  groupoids under {M}orita equivalence}, preprint.

\bibitem[Oza13a]{OzawaCEC}
N.~Ozawa, \emph{About the {C}onnes embedding conjecture: algebraic approaches},
  Jpn. J. Math. \textbf{8} (2013), no.~1, 147--183.

\bibitem[Oza13b]{OzawaSquareSum}
\bysame, \emph{Noncommutative real algebraic geometry of {K}azhdan's property
  ({T})}, arXiv:1312.5431 (2013).

\bibitem[Pan89]{Pansu}
P.~Pansu, \emph{Cohomologie ${L}^p$ des vari\'et\'es \'a courbure n\'egative,
  cas du degr\'e $1$}, Rend. Sem. Mat. Univ. Politec. Torino \textbf{Special
  Issue} (1989), 95--120.

\bibitem[Roe03]{RoeLectureNote}
J.~Roe, \emph{Lectures on coarse geometry}, University Lecture Series, vol.~31,
  American Mathematical Society, Providence, RI, 2003.

\bibitem[Sch09]{Schmudgen}
K.~Schm{\"u}dgen, \emph{Noncommutative real algebraic geometry---some basic
  concepts and first ideas}, Emerging applications of algebraic geometry, IMA
  Vol. Math. Appl., vol. 149, Springer, New York, 2009, pp.~325--350.

\bibitem[Sha00]{ShalomUniformAction}
Y.~Shalom, \emph{Rigidity of commensurators and irreducible lattices}, Invent.
  Math \textbf{141} (2000), no.~1, 691--695.

\bibitem[Sta09]{Stalder}
Y.~Stalder, \emph{Fixed point properties in the space of marked groups}, Limits
  of graphs in group theory and computer science, EPFL Press, Lausanne, 2009,
  pp.~171--182.

\bibitem[{\v S}W13]{SpakulaWillettCrelle}
J.~{\v S}pakula and R.~Willett, \emph{Maximal and reduced roe algebras of
  coarsely embeddable spaces}, J. Reine Angew. Math. \textbf{678} (2013),
  35--68.

\bibitem[Why99]{Whyte}
K.~Whyte, \emph{Amenability, bi-lipschitz equivalence, and the von {N}eumann
  conjecture}, Duke Math. J. \textbf{99} (1999), no.~1, 93--112.

\bibitem[WY12a]{WillettYuI}
R.~Willett and G.~Yu, \emph{Higher index theory for certain expanders and
  {G}romov monster groups, {I}}, Adv. Math. \textbf{229} (2012), no.~3,
  1380--1416.

\bibitem[WY12b]{WillettYuII}
\bysame, \emph{Higher index theory for certain expanders and {G}romov monster
  groups, {II}}, Adv. Math. \textbf{229} (2012), no.~3, 1762--1803.

\bibitem[WY13]{WillettYu13}
\bysame, \emph{Geometric property ({T})}, preprint, arXiv:1311.6197v2 (2013).

\end{thebibliography}

\end{document}